\newtheorem{theorem}{Theorem}[section]
\newtheorem{lemma}[theorem]{Lemma}
\newtheorem{corollary}[theorem]{Corollary} 
\theoremstyle{definition}
\newtheorem{definition}[theorem]{Definition}
\newtheorem{example}[theorem]{Example}
\newtheorem{notation}[theorem]{Notation} 
\theoremstyle{remark}
\newtheorem{remark}[theorem]{Remark}
\numberwithin{equation}{section}
\def\v#1#2#3#4{\sum_{\substack{#1=#2\\#1\ne #3}}^#4}
\def\nr#1#2#3#4{\|#1^#2-#3^#4\|}
\begin{document}

\title[Generalization of the Apollonius theorem for simplices]{Generalization of the Apollonius theorem for simplices and related problems}
\author[M. N. Vrahatis]{Michael N. Vrahatis}
\address{\kern-0.08cm Department of Mathematics, University of Patras, GR-26110 Patras, Greece}
\email{vrahatis@math.upatras.gr\,\, \&\, vrahatis@upatras.gr}
\subjclass[2020]{Primary 51M05, 52A40. 
Secondary 52A35, 53A07, 51M20, 51M25, 52A37.}
\keywords{Euclidean geometries and generalizations, geometry and topology,
Generalization of Apollonius' theorem, Generalization of Commandino's theorem, Generalization of Pythagoras' theorem,
Generalization of Carnot's theorem,
simplex and set enclosing (covering), simplex thickness, simplex bisection, minimum enclosing ball.}

\begin{abstract}
The Apollonius theorem gives the length of a median of a triangle in terms of the lengths of its sides.
The straightforward generalization of this theorem obtained for \emph{m}-simplices in the \emph{n}-dimensional Euclidean space
for \emph{n} greater than or equal to \emph{m} is given.
Based on this, generalizations of properties related to the medians of a triangle are presented.
In addition, applications of the generalized Apollonius's theorem and the related to the medians results, are given
for obtaining:
(a) the minimal spherical surface that encloses a given simplex or a given bounded set,
(b) the thickness of a simplex that it provides a measure for the quality or how well shaped a simplex is, and
(c) the convergence and error estimates of the root-finding bisection method applied on simplices.
\end{abstract}

\maketitle

\thispagestyle{empty}

\section{Introduction}
Apollonius\footnote{Apollonius of Perga (\emph{c}.\ 240 -- \emph{c}.\ 190 B.C.), Greek mathematician, known as the ``Great Geometer''.}
gave a theorem known as the \emph{Apollonius theorem}\/ that relates
the length of a median of a triangle in terms of the lengths of its sides.
Specifically, he gave the following well known and widely used
theorem: 
\begin{theorem}[Apollonius' theorem]\label{ApolloniusTh1}
In any triangle $\Delta A B C$ with vertices $A$, $B$, $C$ and lengths of its sides $\ell (\,\,\overline{\!\!AB})$, $\ell (\,\,\overline{\!\!AC})$ and
$\ell (\,\,\overline{\!\!BC})$,
if\,\,\, $\overline{\!\!AD}$  is a median with length $\ell (\,\,\overline{\!\!AD})$, it holds that:
\begin{equation}\label{eq:Apollonius}
2\bigl[\ell (\,\,\overline{\!\!AB})^{2} + \ell (\,\,\overline{\!\!AC})^{2}\bigr] = \ell (\,\,\overline{\!\!BC})^{2} + 4\,\ell (\,\,\overline{\!\!AD})^{2}.
\end{equation}
\end{theorem}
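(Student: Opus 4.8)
The plan is to prove the identity by placing the triangle in a vector setting, which makes the algebra transparent and foreshadows the $m$-simplex generalization to follow. First I would let $\vec{A}$, $\vec{B}$, $\vec{C}$ denote the position vectors of the three vertices and set $\vec{u} = \vec{B} - \vec{A}$ and $\vec{v} = \vec{C} - \vec{A}$, so that $\ell(\overline{AB})^{2} = \|\vec{u}\|^{2}$ and $\ell(\overline{AC})^{2} = \|\vec{v}\|^{2}$. Since $D$ is the midpoint of $\overline{BC}$, its position vector is $\tfrac{1}{2}(\vec{B} + \vec{C})$, whence the median vector is $\vec{D} - \vec{A} = \tfrac{1}{2}(\vec{u} + \vec{v})$.

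Next I would expand the two relevant squared lengths using the bilinearity of the inner product. For the opposite side, $\ell(\overline{BC})^{2} = \|\vec{v} - \vec{u}\|^{2} = \|\vec{u}\|^{2} + \|\vec{v}\|^{2} - 2\,\vec{u}\cdot\vec{v}$, and for the median, $4\,\ell(\overline{AD})^{2} = \|\vec{u} + \vec{v}\|^{2} = \|\vec{u}\|^{2} + \|\vec{v}\|^{2} + 2\,\vec{u}\cdot\vec{v}$. Adding these two expansions cancels the cross term $\vec{u}\cdot\vec{v}$ and yields $\ell(\overline{BC})^{2} + 4\,\ell(\overline{AD})^{2} = 2\|\vec{u}\|^{2} + 2\|\vec{v}\|^{2} = 2\bigl[\ell(\overline{AB})^{2} + \ell(\overline{AC})^{2}\bigr]$, which is exactly \eqref{eq:Apollonius}.

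An alternative synthetic route would apply the law of cosines in the two sub-triangles $\triangle ABD$ and $\triangle ACD$, using that $\angle ADB$ and $\angle ADC$ are supplementary so that their cosines are negatives of one another; summing the two relations again eliminates the cosine term. There is no genuine obstacle here for the classical statement: the only point requiring care is that the midpoint hypothesis $\ell(\overline{BD}) = \ell(\overline{DC}) = \tfrac{1}{2}\ell(\overline{BC})$ be invoked correctly, and in the vector form this is automatic. I would favour the vector computation precisely because it isolates the cancellation of the inner-product cross term, which is the same mechanism that should survive when the midpoint is replaced by a centroid and the triangle by an $m$-simplex in the generalization that is the subject of this paper.
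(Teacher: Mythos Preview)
Your proof is correct. Note, however, that the paper does not actually supply a separate proof of Theorem~\ref{ApolloniusTh1}: the classical Apollonius theorem is merely stated in the Introduction as a known result, and the paper's effort goes into proving the $m$-simplex generalization (Theorem~\ref{SimApolloniusTh}), from which the triangle case follows by setting $m=2$.

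That said, your vector argument is very much in the spirit of the paper's proof of the general theorem. There the author writes the vertices in coordinates, expands $\|\mu_i^m\|^2 = \|\kappa_i^m - \upsilon^i\|^2$ componentwise, and rearranges the resulting sums of squares and cross terms until the edge lengths emerge. Your computation is the coordinate-free version of exactly this manipulation: the cancellation of $\vec{u}\cdot\vec{v}$ when you add $\|\vec{v}-\vec{u}\|^2$ and $\|\vec{u}+\vec{v}\|^2$ is the $m=2$ instance of the cross-term bookkeeping the paper performs in general. Using inner products rather than explicit coordinates is a small but genuine improvement in presentation, and your remark that the same mechanism ``survives when the midpoint is replaced by a centroid'' is precisely what happens in the paper's proof of Theorem~\ref{SimApolloniusTh}.
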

The straightforward generalization of the concept of a triangle to the $n$\!\! -\!\! dimensional Euclidean space $\mathbb{R}^n$ is the simplex,
that is the smallest possible $n$-dimensional polytope, \emph{i.e.} the convex hull of a finite number of points~\cite[p.8]{Guggenheimer1977}.
In general, in geometry, a simplex is considered to be a generalization of the notion of a triangle to arbitrary dimensions.

Based on this notion we give the generalization of Apollonius' theorem for $m$-simplices in $\mathbb{R}^n$ ($n \geqslant m$) (see \S\ref{subsec:GenApo}).
Furthermore, we give generalizations obtained for simplices of well known properties related to the medians of triangles (see \S\ref{subsec:MedProp}).
Also, applications of the generalized Apollonius' theorem and the related results are given
for obtaining:
(a) the minimal spherical surface that encloses a given simplex or a bounded subset of $\mathbb{R}^n$ (see \S\ref{subsec:SimCov}),
(b) the thickness of a simplex that provides a measure for the quality or how well shaped a simplex is (see \S\ref{subsec:SimThick}) and
(c) the convergence and error estimates of the root-finding bisection method applied on simplices (see \S\ref{subsec:SimBis}).
The provided proofs of the proposed results are simple.

\section{Generalization of the Apollonius theorem for simplices}\label{sec:GenApo}
\subsection{Notation, definitions, and preliminary concepts}\label{subsec:Prelim}

\begin{definition}
For any positive integer $n$, and for any set of points $\{ {\upsilon}^k\}_{k=0}^n$
in some linear space which are affinely independent (\emph{i.e.} the vectors
$\{ {\upsilon}^k - {\upsilon}^0\}_{k=1}^n$
are linearly independent)
the convex hull
${\rm co}\{{\upsilon}^0,$ $ {\upsilon}^1, \ldots,$ ${\upsilon}^n \} =
[ {\upsilon}^0,$ ${\upsilon}^1, \ldots,$ ${\upsilon}^n ]$
is called the {\em $n$-simplex with vertices}
${\upsilon}^0,$ ${\upsilon}^1, \ldots, {\upsilon}^n$.
The convex hull of any nonempty subset of the $n + 1$ points that define an $n$-simplex is called a \emph{face}\/ of the simplex.
Faces are simplices themselves and they are also called {\em facets}.
Specifically, for each subset of $m+1$ elements
$\{ {\omega }^0, {\omega }^1, \ldots ,{\omega }^m \} \subset
\{ {\upsilon}^0, {\upsilon}^1, \ldots ,{\upsilon}^n \}, $
the $m$-simplex
$[ {\omega }^0, {\omega }^1, \ldots ,{\omega }^m ]$
is called an {\em $m$-face} of
$[ {\upsilon}^0,$ $ {\upsilon}^1, \ldots,{\upsilon}^n ].$
In particular, 0-faces are  vertices and 1-faces are edges.
If all the edges have the same length, the simplex is called \emph{regular}\/ or \emph{equilateral}\/ (\emph{cf.}~\cite{BlumenthalW1941}).
\end{definition}

\begin{remark}
The order in which the points $\{{\upsilon}^k\}_{k=0}^n$ are written in the lists determines an orientation.
On the other hand, the convex hull is independent of this order and thus the vertices can be permuted in a list.
\end{remark}

\begin{notation}\label{Not-3}
An $m$-simplex $\sigma^m$ in $\mathbb{R}^n$ ($n \geqslant m$)  with set of vertices
$\{ {\upsilon}^k\}_{k=0}^m$
is denoted by
$\sigma^m = [{\upsilon}^0, {\upsilon}^1, \ldots,{\upsilon}^m ]$ and its boundary by ${\vartheta}\kern0.01cm$${\sigma}^m$.
Also, we denote the $(m-1)$-simplex that determines the $i$-th $(m-1)$-face opposite to vertex ${\upsilon}^i$ of ${\sigma}^m$
by
$\sigma^{m}_{\neg i} = [{\upsilon}^0,$ $
{\upsilon}^1, \ldots,$ ${\upsilon}^{i-1}, {\upsilon}^{i+1}, \ldots,
{\upsilon}^m]$.
Similarly, we denote the $(m-2)$-simplex that determines the $j$-th $(m-2)$-face opposite to vertex ${\upsilon}^j$ of $\sigma^{m}_{\neg i}$,
for $j \neq i$,
by
$\sigma^{m}_{\neg ij} = [{\upsilon}^0,$ $
{\upsilon}^1, \ldots,$ ${\upsilon}^{i-1}, {\upsilon}^{i+1}, \ldots,$
${\upsilon}^{j-1}, {\upsilon}^{j+1}, \ldots,
{\upsilon}^m]$.
\end{notation}

\begin{remark}
Obviously, it holds that ${\vartheta}\kern0.01cm$${\sigma}^m = \bigcup _{i=0}^{m} \sigma^{m}_{\neg i}$.
\end{remark}

\begin{remark}
The number of $k$-faces of an $m$-simplex $\sigma ^n$ in $\mathbb{R}^n$ ($n \geqslant m$) is equal to
the binomial coefficient $\binom{m+1}{k+1}$ (\emph{cf.} \cite[p.120]{Coxeter1948}).
For example, the number of 0-faces (vertices)
is $\binom{m+1}{1}= m+1$, the number of 1-faces (edges)
is $\binom{m+1}{2}= m(m+1)/2$, while
the number of $(m-1)$-faces
is $\binom{m+1}{m}= m+1$.
\end{remark}

\begin{definition}
The \emph{diameter}\/ of
an $m$-simplex $\sigma^m$ in $\mathbb{R}^n$ ($n \geqslant m$) denoted by ${\rm diam}(\sigma^m)$,
is the length of the longest edge (1-face) of $\sigma^m$
(\emph{cf.} \cite[p.607]{AlexandroffH1974}, \cite[p.812]{Whitehead1940}),
where the Euclidean norm, $\|\cdot\|$, is used here to measure distances.
The length of the shortest edge of $\sigma^m$ will be denoted by ${\rm shor}(\sigma^m)$.
The \emph{diameter}\/ of a bounded set $P$ in $\mathbb{R}^n$ is defined as
$ {\rm diam}(P) = \sup_{x,y \in P} \| x - y \|$, while
the \emph{width}\/ or \emph{breadth} of $P$ denoted by ${\rm wid}(P)$, is
the minimum distance of two parallel supporting hyperplanes (tac-hyperplanes) of $P$.
\end{definition}

\begin{definition}
Let $\sigma^m = [{\upsilon}^0, {\upsilon}^1, \ldots ,{\upsilon}^m ]$ be an $m$-simplex in $\mathbb{R}^n$ ($n \geqslant m$)
then, the \emph{barycenter}\/ or \emph{centroid}\/ of $\sigma^m$ and the \emph{barycenter}\/ or \emph{centroid}\/ of the
$i$-th $(m-1)$-face $\sigma^{m}_{\neg i}$, $i=0, 1, \ldots, m$, of $\sigma^m$
are respectively denoted by $\kappa^m$ and $\kappa_i^m$ and are given by
\begin{equation}\label{eq:Bar}
\kappa^m = \frac{1}{m+1} \sum_{j=0}^{m} {\upsilon}^j\,\,
\text{ \ and \ }\,\,
\kappa_i^m  = \frac{1}{m} \sum_{\substack{j=0\\j\ne i}}^m {\upsilon}^j,\,\,\,\, i=0, 1, \ldots, m.
\end{equation}
\end{definition}

\begin{remark}
By convexity the barycenter of any $m$-simplex $\sigma^m$ in $\mathbb{R}^n$
is a point in the relative interior of $\sigma ^m$.
\end{remark}

\begin{definition}
Let $\sigma^m = [{\upsilon}^0, {\upsilon}^1, \ldots ,{\upsilon}^m ]$ be an $m$-simplex in $\mathbb{R}^n$ ($n \geqslant m$)
then, $\sigma^m$ is called \emph{orthocentric}\/ if its $m+1$ altitudes intersect in a common point $o^m$, called its \emph{orthocenter}.
\end{definition}

\begin{remark}
The $m+1$ altitudes of an $m$-simplex are not necessarily concurrent
if $m \geqslant 3$.
\end{remark}

\begin{definition}
Let $\sigma^m = [{\upsilon}^0, {\upsilon}^1, \ldots ,{\upsilon}^m ]$ be an $m$-simplex in $\mathbb{R}^n$ ($n \geqslant m$)
and let $\kappa_i^m$ be the barycenter of the
$i$-th $(m-1)$-face $\sigma^{m}_{\neg i}$, opposite to vertex ${\upsilon}^i$ of $\sigma^m$
then, the 1-simplex ${\mu}_i^m =\bigl[{\upsilon}^i,\, \kappa_i^m\bigr]$ for $i=0, 1, \ldots, m$ is called the \emph{$i$-th median}\/ of~$\sigma^m$
that corresponds to the vertex ${\upsilon}^i$.
\end{definition}

\begin{notation}
The $(n-1)$-dimensional spherical surface is denoted by ${S}^{n-1}$ for which ${S}^{n-1} = \vartheta {B}^n_{r,c}$
where ${B}^n_{r,c} \subset \mathbb{R}^n$ denotes the standard closed Euclidean ball of radius $r > 0$ centered at a point $c$ in $\mathbb{R}^n$,
\emph{i.e.} ${B}^n_{r,c} = \{ x \in \mathbb{R}^n : \| x -c \| \leqslant r \}$.
\end{notation}

\begin{definition}
Assume that $\sigma^m$
is an $m$-simplex in\/ ${\mathbb{R}}^n$ $(n \geqslant m)$ and let
$\kappa^m$ be its barycenter. Then the smalest radius of the spherical surfaces ${S}^{n-1}$
that enclose $\sigma^m$ is called \emph{circumradius}\/ and it is denoted by $\rho_{\rm cir}^m$ and
the corresponding center denoted by $c_{\rm cir}^m$ is called \emph{circumcenter}.
If $c_{\rm cir}^m = \kappa^m$ then we call the corresponding circumradius as \emph{barycentric circumradius}\/ and we denote it as $\beta_{\rm cir}^m$.
\end{definition}

\begin{definition}
Let $\sigma^m$ be an $m$-simplex in\/ ${\mathbb{R}}^n$ $(n \geqslant m)$ with diameter ${\rm diam}(\sigma^m)$ and
barycenter $\kappa^m$.
Then,
the largest radius of the spherical surfaces ${S}^{n-1}$
that are enclosed within $\sigma^m$ is called \emph{inradius}\/ of $\sigma^m$, it is denoted by $\rho_{\rm inr}^m$ and
the corresponding center denoted by $c_{\rm inr}^m$ is called \emph{incenter}.
If $c_{\rm inr}^m = \kappa^m$ then the corresponding inradius denoted by $\beta_{\rm inr}^m$ is called \emph{barycentric inradius} and it
is given~by:
\begin{equation}\label{eq:BarInr}
\beta_{\rm inr}^m = \min_{x \in {\vartheta}{\sigma}^m} \|\kappa^{m} - x\|,
\end{equation}
where ${\vartheta}{\sigma}^m$ denotes the boundary of $\sigma^m$.
Furthermore, the following dimensionless quantity
\begin{equation}\label{eq:thic}
{\theta}({\sigma}^m) = \beta_{\rm inr}^m / {\rm diam}(\sigma^m),
\end{equation}
is called the \emph{thickness}\/ of $\sigma^m$
(\emph{cf.} \cite{Kojima1978,Munkres1966,Saigal1979,Whitehead1940}).
\end{definition}

Next, the straightforward generalization of the Apollonius theorem for $m$-simplices in\/ ${\mathbb{R}}^n$ $(n \geqslant m)$ follows.

\subsection{Generalization of the Apollonius theorem}\label{subsec:GenApo}

\paragraph*{\\[-0.3cm]
Apollonius' theorem
(\emph{cf.} Theorem~\ref{ApolloniusTh1}) can be formulated for 2-simplices as follows:}

\begin{theorem}[Apollonius' theorem for 2-simplices]\label{ApolloniusTh2}
Let $\sigma^2 = [{\upsilon}^0, {\upsilon}^1, {\upsilon}^2]$ be a $2$-simplex in $\mathbb{R}^n$ $(n \geqslant 2)$.
If $\mu_i^2$ is the $i$-th median of $\sigma^2$ that corresponds to the vertex ${\upsilon}^i$, $i=0,1,2$, it holds that:
\begin{equation}\label{eq:Apollonius2}
2\v j0i2  \nr {\upsilon}i{\upsilon}j^2 =
\sum_{\substack{p=0\\p\ne i}}^{1}
      \v q{p+1}i2  \nr {\upsilon}p{\upsilon}q^2 + 2^2\,\|\mu_i^2\|^2, \quad i=0, 1, 2.
\end{equation}
\end{theorem}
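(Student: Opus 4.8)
The plan is to fix the index $i \in \{0,1,2\}$ and collapse both sides of \eqref{eq:Apollonius2} to a single vector identity, then expand every squared distance through the Euclidean inner product $\langle\cdot,\cdot\rangle$. First I would unpack the compressed index notation. On the left, $\v j0i2 \nr {\upsilon}i{\upsilon}j^2$ runs over the two vertices $\upsilon^j$ with $j\neq i$, so it is the sum of the squared lengths of the two edges meeting at $\upsilon^i$. On the right, the double sum $\sum_{p\neq i}\sum_{q=p+1,\,q\neq i}\nr {\upsilon}p{\upsilon}q^2$ runs over the unordered pairs $\{p,q\}$ avoiding $i$; since exactly one such pair survives, this is precisely the squared length of the single edge opposite $\upsilon^i$. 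Finally $\|\mu_i^2\|$ is the length $\|\upsilon^i-\kappa_i^2\|$ of the $i$-th median, with $\kappa_i^2=\tfrac12\sum_{j\neq i}\upsilon^j$ by \eqref{eq:Bar}. Relabelling the apex $\upsilon^i$ as $a$ and the two remaining vertices as $b,c$, the whole theorem reduces to the single claim $2(\|a-b\|^2+\|a-c\|^2)=\|b-c\|^2+4\|a-\tfrac12(b+c)\|^2$.

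The key computational step is then to expand both sides of this reduced identity using $\|x-y\|^2=\|x\|^2-2\langle x,y\rangle+\|y\|^2$. The left side becomes $4\|a\|^2+2\|b\|^2+2\|c\|^2-4\langle a,b+c\rangle$. On the right, $\|b-c\|^2=\|b\|^2-2\langle b,c\rangle+\|c\|^2$, while the median term expands as $4\|a-\tfrac12(b+c)\|^2=4\|a\|^2-4\langle a,b+c\rangle+\|b\|^2+2\langle b,c\rangle+\|c\|^2$; the cross terms $\pm2\langle b,c\rangle$ cancel and the remainder reproduces the left side verbatim. I expect this to be a short, mechanical calculation with no sign traps beyond the factor $\tfrac12$ sitting inside the median term.

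Since the argument uses only the inner-product structure, it is insensitive to the ambient dimension, so the statement for a $2$-simplex in $\mathbb{R}^n$ with $n\geqslant 2$ requires nothing beyond the planar computation; equivalently, one could observe that the three affinely independent vertices span a $2$-plane and invoke the classical Theorem~\ref{ApolloniusTh1} directly. The only genuine obstacle is therefore bookkeeping: one must verify that the nested-sum notation really does enumerate ``two incident edges on the left, one opposite edge on the right'' for each of $i=0,1,2$, and that an empty inner sum (which occurs, for instance, for the term $p=1$ when $i=2$) is correctly discarded. Once the index ranges are checked, the identity is immediate from the expansion above and holds simultaneously for all three medians.
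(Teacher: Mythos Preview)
Your proposal is correct. The paper does not supply a separate proof of this $m=2$ statement---it is presented as a reformulation of the classical Theorem~\ref{ApolloniusTh1}, with the formal argument deferred to the general Theorem~\ref{SimApolloniusTh}, whose coordinate expansion of $\|\mu_i^m\|^2$ is, up to the cosmetic difference between $\sum_t x_t y_t$ and $\langle x,y\rangle$, exactly your inner-product computation specialized to $m=2$.
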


\begin{example}
 By setting in Eq.~(\ref{eq:Apollonius2}) $i=0$  it follows that:
 \[
 2 \bigl(\|{\upsilon}^0 - {\upsilon}^1\|^2 + \|{\upsilon}^0 - {\upsilon}^2\|^2 \bigr) = \|{\upsilon}^1 - {\upsilon}^2\|^2 + 2^2\,\|\mu_0^2\|^2,
 \]
(\emph{cf.} Theorem~\ref{ApolloniusTh1}).
\end{example}

\begin{theorem}[Generalization of the Apollonius theorem for simplices]\label{SimApolloniusTh}
Assume that $\sigma^m = [{\upsilon}^0, {\upsilon}^1, \ldots, {\upsilon}^m]$
is an $m$-simplex in\/ ${\mathbb{R}}^n$ $(n \geqslant m)$
with barycenter $\kappa^m$.
Assume further that $\kappa_i^m$ is the barycenter of the
$i$-th $(m-1)$-face $\sigma^{m}_{\neg i}$ of $\sigma^m$ and
let
$\mu_i^m = [\kappa_i,\, {\upsilon}^i]$ be the $i$-th median that corresponds to the vertex ${\upsilon}^i$ for\, $i=0, 1, \ldots, m$.
Then it holds that:
\begin{equation}\label{eq:2.0}
m\v j0im  \nr {\upsilon}i{\upsilon}j^2 =
\sum_{\substack{p=0\\p\ne i}}^{m-1}
      \v q{p+1}im  \nr {\upsilon}p{\upsilon}q^2 + m^2 \|\mu_i^m\|^2, \quad i=0,1, \ldots, m.
\end{equation}
\end{theorem}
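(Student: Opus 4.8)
The plan is to fix an index $i \in \{0,1,\ldots,m\}$ and reduce the identity to a direct expansion of squared Euclidean norms in terms of inner products, after which the cross terms cancel automatically. First I would introduce the $m$ \emph{edge vectors} emanating from $\upsilon^i$, namely $w_j = \upsilon^i - \upsilon^j$ for $j \neq i$, and rewrite both geometric quantities in \eqref{eq:2.0} through these vectors. Using the definition $\kappa_i^m = \tfrac{1}{m}\sum_{j\neq i}\upsilon^j$, the median satisfies $m(\upsilon^i - \kappa_i^m) = \sum_{j\neq i}(\upsilon^i-\upsilon^j) = \sum_{j\neq i} w_j$, so that the last term becomes $m^2\|\mu_i^m\|^2 = \bigl\|\sum_{j\neq i} w_j\bigr\|^2$. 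Likewise, for any two indices $p,q\neq i$ one has $\upsilon^p-\upsilon^q = w_q - w_p$, hence every edge length of the opposite face $\sigma^m_{\neg i}$ is expressible in the same vectors.

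The key step is then a matching expansion of both sides via the polarization identity $\|a\pm b\|^2 = \|a\|^2 + \|b\|^2 \pm 2\langle a,b\rangle$. Expanding the median term gives $\bigl\|\sum_{j\neq i} w_j\bigr\|^2 = \sum_{j\neq i}\|w_j\|^2 + 2\sum_{\substack{p<q\\ p,q\neq i}}\langle w_p,w_q\rangle$, while expanding the face-edge double sum gives $\sum_{\substack{p<q\\ p,q\neq i}}\|w_q-w_p\|^2 = (m-1)\sum_{j\neq i}\|w_j\|^2 - 2\sum_{\substack{p<q\\ p,q\neq i}}\langle w_p,w_q\rangle$, where the factor $m-1$ arises because each $\|w_j\|^2$ occurs in exactly $m-1$ of the unordered pairs among the $m$ indices distinct from $i$. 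Adding these two expressions, the inner-product sums cancel exactly and one is left with $m\sum_{j\neq i}\|w_j\|^2$, which is precisely the left-hand side $m\sum_{j\neq i}\|\upsilon^i-\upsilon^j\|^2$ of \eqref{eq:2.0}.

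I expect the only real obstacle to be careful bookkeeping of the index sets: one must verify that the double sum $\sum_{p\neq i}\sum_{q>p,\,q\neq i}$ ranges over each of the $\binom{m}{2}$ unordered edges of $\sigma^m_{\neg i}$ exactly once, and consequently that each $\|w_j\|^2$ is counted with the correct multiplicity $m-1$. Once this combinatorial accounting is pinned down, the cancellation of cross terms is purely mechanical and holds in any dimension $n \geqslant m$, since nothing beyond the inner-product structure of $\mathbb{R}^n$ is used. Finally, I would note that specializing $m=2$ recovers Theorem~\ref{ApolloniusTh2}, and hence the classical Apollonius Theorem~\ref{ApolloniusTh1}, which serves as a consistency check on the multiplicities.
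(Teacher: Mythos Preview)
Your argument is correct and follows essentially the same route as the paper: both proofs expand $m^2\|\mu_i^m\|^2 = \bigl\|\sum_{j\neq i}(\upsilon^i-\upsilon^j)\bigr\|^2$ and identify the resulting diagonal and cross terms with the edge-length sums appearing in \eqref{eq:2.0}. The only cosmetic difference is that the paper carries out the expansion coordinate by coordinate (writing $\upsilon^i=(\upsilon^i_1,\ldots,\upsilon^i_n)$ and manipulating the scalar sums), whereas your introduction of the vectors $w_j=\upsilon^i-\upsilon^j$ and use of the inner product keeps the computation coordinate-free and makes the multiplicity $m-1$ for each $\|w_j\|^2$ immediately visible; the underlying algebra and the cancellation mechanism are identical.
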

\begin{proof}
Assume that ${\upsilon}^i=({\upsilon}^i_1,{\upsilon}^i_2,\ldots,{\upsilon}^i_n)$ is a vertex
 of $\sigma^m$, then we have
$\|\mu_i^m\|^2=\|\kappa_i-{\upsilon}^i\|^2$.
Equivalently, we can obtain
\begin{equation}\label{eq:2.1}
\|\mu_i^m\|^2={\frac{1}{m^2}}\, \mathit\Psi_i,
\end{equation}
where
\[
\mathit\Psi_i =  \sum^n_{t=1}\,\Biggl\{\, \v l0im
   {\upsilon}^l_t - m {\upsilon}^i_t\Biggr\}^2.
\]
The term $\mathit\Psi_i$ can be written as follows:
\begin{align*}
&\mathit\Psi_i = \sum^n_{t=1} \Biggl\{ \biggl[ \v l0im
   {\upsilon}^l_t  \biggr] ^2 + m^2\bigl[{\upsilon}^i_t\bigr]^2 -2 m {\upsilon}^i_t \v l0im {\upsilon}^l_t \Biggr\}  \Longleftrightarrow \\[0.2cm]
&\mathit\Psi_i =  \sum^n_{t=1} \Biggl\{ \v l0im
\bigl[ {\upsilon}^l_t \bigr]^2 + 2
\sum_{\substack{p=0\\p\ne i}}^{m-1}
     \v q{p+1}im {\upsilon}^p_t {\upsilon}^q_t  +
    m^2 \bigl[{\upsilon}^i_t\bigr]^2 -2 m {\upsilon}^i_t \v l0im {\upsilon}^l_t \Biggr\},
\end{align*}
from which we obtain
\begin{align*}
&\mathit\Psi_i =  \sum^n_{t=1} \Biggl\{m \v l0im
\bigl[ {\upsilon}^l_t \bigr]^2 +
    m^2 \bigl[{\upsilon}^i_t\bigr]^2 - 2 m {\upsilon}^i_t \v l0im {\upsilon}^l_t  - (m-1) \v l0im
\bigl[ {\upsilon}^l_t \bigr]^2  \\
      &  \kern1.85cm +
 2 \sum_{\substack{p=0\\ p\ne i}}^{m-1}
     \v q{p+1}im {\upsilon}^p_t {\upsilon}^q_t \Biggr\} \Longleftrightarrow \\[0.2cm]
&\mathit\Psi_i
    =  m\sum^n_{t=1} \Biggl\{ \v l0im
\bigl[ {\upsilon}^l_t \bigr]^2 +
    m \bigl[{\upsilon}^i_t\bigr]^2 - 2  {\upsilon}^i_t \v l0im {\upsilon}^l_t \Biggr\}\\
        & \kern1.85cm -\sum^n_{t=1} \Biggl\{ (m-1) \v l0im
\bigl[ {\upsilon}^l_t \bigr]^2  -
2 \sum_{\substack{p=0\\p\ne i}}^{m-1}
     \v q{p+1}im {\upsilon}^p_t {\upsilon}^q_t \Biggr\},
\end{align*}
or equivalently, after some algebraic manipulations, we obtain
\begin{equation}\label{eq:2.3}
\mathit\Psi_i= m\v j0im  \nr {\upsilon}i{\upsilon}j^2-
    \sum_{\substack{p=0\\p\ne i}}^{m-1}
      \v q{p+1}im  \nr {\upsilon}p{\upsilon}q^2.
\end{equation}
Companying Eq.~(\ref{eq:2.1}) and Eq.~(\ref{eq:2.3}) we have
\begin{equation}\label{eq:2.4}
\|\mu_i^m\|^2 = \frac{1}{m^2} \Biggl(m\v j0im  \nr {\upsilon}i{\upsilon}j^2-
    \sum_{\substack{p=0\\p\ne i}}^{m-1}
      \v q{p+1}im  \nr {\upsilon}p{\upsilon}q^2\Biggr).
\end{equation}
Thus the theorem is proved.
\end{proof}

\subsection{Generalizations of median properties}\label{subsec:MedProp}

\begin{remark}
The generalization of the Apollonius Theorem~\ref{SimApolloniusTh} provides the following immediate result
which gives an expression for the medians length of a simplex that can be calculated in terms of its edge lengths.
\end{remark}

\begin{corollary}[Medians length]\label{MedCor}
Assume that  $\sigma^m = [{\upsilon}^0, {\upsilon}^1, \ldots, {\upsilon}^m]$
is an $m$-simplex in\/ ${\mathbb{R}}^n$ $(n \geqslant m)$.
Then the lengths of the $m+1$ medians $\mu_i^m$ for $i=0, 1, \ldots, m$
of $\sigma^m$ are given as follows:
\begin{equation}\label{eq:MedCor}
\|\mu_i^m\| =  \frac{1}{m} \Biggl(m \v j0im  \nr {\upsilon}i{\upsilon}j^2
    -\sum_{\substack{p=0\\p\ne i}}^{m-1}
      \v q{p+1}im  \nr {\upsilon}p{\upsilon}q^2\Biggr)^{1/2} .
\end{equation}
\end{corollary}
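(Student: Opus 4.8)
The plan is to read the result off directly from the generalization of the Apollonius theorem just established, as the statement itself advertises it as an immediate consequence of Theorem~\ref{SimApolloniusTh}. Accordingly I expect the argument to be a single algebraic rearrangement rather than a new computation; in fact the proof of that theorem already isolates $\|\mu_i^m\|^2$ in Eq.~(\ref{eq:2.4}).

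First I would fix an index $i \in \{0, 1, \ldots, m\}$ and invoke Eq.~(\ref{eq:2.0}), transposing the double sum over the squared edge lengths to the left-hand side so as to isolate the median term:
\[
m^2 \|\mu_i^m\|^2 = m \v j0im \nr {\upsilon}i{\upsilon}j^2 - \sum_{\substack{p=0\\p\ne i}}^{m-1} \v q{p+1}im \nr {\upsilon}p{\upsilon}q^2.
\]
Dividing through by $m^2$, which is legitimate for every $m \geqslant 1$, then recovers Eq.~(\ref{eq:2.4}).

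The only step that merits a word is the extraction of the square root. Since the bracketed quantity equals $m^2 \|\mu_i^m\|^2$ it is automatically non-negative, so the positive square root of both sides is well defined; and because a norm is non-negative, it is exactly this positive root that returns $\|\mu_i^m\|$. Taking it yields Eq.~(\ref{eq:MedCor}) and completes the proof. I anticipate no genuine obstacle here: all the analytic content is carried by Theorem~\ref{SimApolloniusTh}, and the corollary simply records the closed-form solution of Eq.~(\ref{eq:2.0}) for the median length in terms of the edge lengths $\nr {\upsilon}p{\upsilon}q$ of $\sigma^m$.
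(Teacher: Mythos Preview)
Your proposal is correct and mirrors the paper's own proof: the paper simply observes that the right-hand side of Eq.~(\ref{eq:2.4}) is positive and takes the square root, which is exactly the rearrangement-and-root-extraction you describe.
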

\begin{proof}
Since the right hand side of Eq.~(\ref{eq:2.4}) is positive, the proof follows from Eq.~(\ref{eq:2.4}).
\end{proof}

\begin{remark}
The length of the $i$-th median $\mu_i^m$ for $i=0, 1, \ldots, m$ of $\sigma^m$ that corresponds to the vertex ${\upsilon}^i$ is given in terms of the lengths of
(a) the $m$ edges of $\sigma^m$ that concur in the vertex ${\upsilon}^i$
and
(b) the $\binom{m}{2}$ edges of the $i$-th $(m-1)$-face $\sigma^{m}_{\neg i}$ of $\sigma^m$.
\end{remark}

\begin{remark}
It is well known the property of a triangle that its three medians concur in the centroid
that divides each of them in the ratio $1:2$, where the longer segment being on the side of the vertex.
In 1565 Commandino\footnote{\,Federico Commandino (1509 -- 1575), Italian mathematician.}
in his work
\emph{``De centro gravitates solidorum''}\/ (The center of gravity of solids),
extended the above property of triangles for the case of the medians of a tetrahedron
(\emph{i.e.} the line segments that join each vertex of the tetrahedron to the barycenter
of the opposite face). The corresponding result is known as \emph{Commandino's theorem}~\cite[p.97]{AlsinaN2015}.
\end{remark}

\begin{theorem}[Commandino's theorem (1565)]\label{CommandinoTh}
The four medians of a tetrahedron concur in a
point that divides each of them in the ratio $1:3$, where the longer segment being on
the side of the vertex of the tetrahedron.
\end{theorem}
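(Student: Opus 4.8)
The plan is to exhibit the single point through which all four medians pass, namely the barycenter $\kappa^3 = \tfrac14\sum_{j=0}^{3}\upsilon^j$ of the tetrahedron $\sigma^3 = [\upsilon^0,\upsilon^1,\upsilon^2,\upsilon^3]$, and then to verify that it cuts each median in the stated ratio. First I would recall that the $i$-th median is the segment $\mu_i^3 = [\upsilon^i,\kappa_i^3]$ joining the vertex $\upsilon^i$ to the barycenter $\kappa_i^3 = \tfrac13\sum_{j\ne i}\upsilon^j$ of the opposite face, so that every point of $\mu_i^3$ is an affine combination $(1-t)\,\upsilon^i + t\,\kappa_i^3$ with $t\in[0,1]$.

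The key step is a one-line regrouping of the defining sum for $\kappa^3$. For each fixed $i\in\{0,1,2,3\}$, separating the term $\upsilon^i$ from the remaining three vertices gives
\[
\kappa^3 = \tfrac14\,\upsilon^i + \tfrac34\Bigl(\tfrac13\sum_{j\ne i}\upsilon^j\Bigr) = \tfrac14\,\upsilon^i + \tfrac34\,\kappa_i^3,
\]
which is exactly the point $(1-t)\,\upsilon^i + t\,\kappa_i^3$ for $t=\tfrac34$. Hence $\kappa^3$ lies on the median $\mu_i^3$ for every $i$, so the four medians share this point; this establishes concurrency. Reading off the two subsegments, I would note that $\|\kappa^3-\upsilon^i\| = \tfrac34\|\mu_i^3\|$ while $\|\kappa^3-\kappa_i^3\| = \tfrac14\|\mu_i^3\|$, so the portion adjacent to the opposite face and the portion adjacent to the vertex stand in the ratio $\tfrac14:\tfrac34 = 1:3$, with the longer segment indeed on the vertex side, as claimed.

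The argument contains no real obstacle: its entire content is the affine identity $\kappa^3 = \tfrac14\upsilon^i + \tfrac34\kappa_i^3$, and the only point demanding care is that this decomposition holds with the \emph{same} coefficients for every choice of $i$ --- it is precisely this uniformity in $i$ that forces the four medians to meet at one and the same point rather than merely pairwise. I would also remark that the statement is the case $m=3$ of a general simplex analogue: the identical computation shows that the $m+1$ medians of an $m$-simplex concur at its barycenter $\kappa^m = \tfrac{1}{m+1}\sum_{j}\upsilon^j$, which divides each median in the ratio $1:m$ with the longer part toward the vertex, recovering the classical centroid ratio $1:2$ for the triangle ($m=2$) and Commandino's ratio $1:3$ for the tetrahedron.
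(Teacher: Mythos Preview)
Your proof is correct and follows essentially the same route as the paper. The paper does not prove Commandino's theorem in isolation but establishes the general $m$-simplex version (Theorem~\ref{SimCommandinoTh}), deriving in coordinates the relation $(m+1)(\kappa^{m}-\upsilon^{i}) = m(\kappa_i^{m}-\upsilon^{i})$, which is algebraically equivalent to your affine identity $\kappa^{m} = \tfrac{1}{m+1}\upsilon^{i} + \tfrac{m}{m+1}\kappa_i^{m}$; your coordinate-free presentation is a bit cleaner, but the content is the same.
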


Next, the generalization of Commandino's theorem for $m$-simplices in\/ ${\mathbb{R}}^n$ $(n \geqslant m)$ follows.

\begin{theorem}[Generalization of Commandino's theorem for simplices]\label{SimCommandinoTh}
Assume that the hypotheses of Theorem~\ref{SimApolloniusTh} are fulfilled,
then the $m+1$ medians of
$\sigma^m$ concur in the barycenter $\kappa^m$ of $\sigma^m$
that divides each of them in the ratio $1:m$, where the longer segment being on
the side of the vertex of $\sigma^m$.
\end{theorem}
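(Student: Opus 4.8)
The plan is to prove both assertions simultaneously by writing the barycenter $\kappa^m$ of $\sigma^m$ explicitly as a convex combination of the two endpoints of each median $\mu_i^m$, and then reading off the concurrency and the division ratio directly from the coefficients of that combination. The only ingredient is a single sum-splitting identity, so no substantive computation is required.

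First I would parametrize a generic point of the median $\mu_i^m = [{\upsilon}^i,\, \kappa_i^m]$ as $(1-t){\upsilon}^i + t\,\kappa_i^m$ with $t \in [0,1]$, and record the identity $\sum_{j\ne i}{\upsilon}^j = m\,\kappa_i^m$, which is immediate from the definition of $\kappa_i^m$ in Eq.~(\ref{eq:Bar}). Splitting the defining sum for $\kappa^m$ so as to isolate the vertex ${\upsilon}^i$ then gives
\[
\kappa^m = \frac{1}{m+1}\sum_{j=0}^{m}{\upsilon}^j
= \frac{1}{m+1}\,{\upsilon}^i + \frac{1}{m+1}\sum_{\substack{j=0\\ j\ne i}}^{m}{\upsilon}^j
= \frac{1}{m+1}\,{\upsilon}^i + \frac{m}{m+1}\,\kappa_i^m .
\]
The right-hand side is a convex combination with both coefficients in $(0,1)$, so $\kappa^m$ is the point of $\mu_i^m$ corresponding to $t = m/(m+1)$ and lies in its relative interior. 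Since $i \in \{0,1,\ldots,m\}$ was arbitrary, all $m+1$ medians pass through the single point $\kappa^m$, which establishes concurrency.

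To obtain the ratio I would note that, along the segment directed from ${\upsilon}^i$ to $\kappa_i^m$, the value $t = m/(m+1)$ makes the subsegment $[{\upsilon}^i,\kappa^m]$ have relative length $m/(m+1)$ and the subsegment $[\kappa^m,\kappa_i^m]$ have relative length $1/(m+1)$. Hence $\kappa^m$ divides $\mu_i^m$ in the ratio $1:m$, with the longer (the $m$) part lying on the side of the vertex ${\upsilon}^i$, exactly as asserted; for $m=3$ this recovers Commandino's Theorem~\ref{CommandinoTh}, and for $m=2$ the familiar $1:2$ ratio for a triangle. There is no genuine obstacle in the argument; the only point needing care is the bookkeeping that identifies $m/(m+1) > 1/2$ as the vertex-side portion, which fixes the orientation of the ratio.
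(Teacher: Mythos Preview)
Your proof is correct and follows essentially the same approach as the paper: both establish the affine identity $(m+1)\kappa^m = {\upsilon}^i + m\,\kappa_i^m$ (equivalently, the paper's Eq.~(\ref{eq:Km3})) and read off collinearity, concurrency, and the $1{:}m$ ratio from it. The only difference is cosmetic: the paper unpacks the computation into coordinates $({\upsilon}^i_1,\ldots,{\upsilon}^i_n)$, whereas you work directly with the defining sums, which is cleaner but not a genuinely different route.
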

\begin{proof}
Let ${\upsilon}^i=({\upsilon}^i_1,{\upsilon}^i_2,\ldots,{\upsilon}^i_n)$ be a vertex of $\sigma^m$, then
\begin{align*}
\kern-2cm \kappa^m - {\upsilon}^i =
\Bigl\{&\Bigl[\frac{1}{m+1}\,\Bigl({\upsilon}^0_1 + {\upsilon}^1_1 + \cdots + {\upsilon}^m_1\Bigr)- {\upsilon}^i_1 \Bigr], \ldots ,\\
&\Bigl[\frac{1}{m+1}\,\Bigl({\upsilon}^0_n + {\upsilon}^1_n + \cdots + {\upsilon}^m_n\Bigr)- {\upsilon}^i_n \Bigr]\Bigr\} \Longleftrightarrow
\end{align*}
\begin{align}\label{eq:Km1}
\kappa^m - {\upsilon}^i =
\frac{1}{m+1}\,\Bigl\{&\Bigl({\upsilon}^0_1 + {\upsilon}^1_1 + \cdots + {\upsilon}^{i-1}_1 - m { \upsilon}^i_1 + {\upsilon}^{i+1}_1 + \cdots + {\upsilon}^m_1\Bigr), \ldots ,\\ \nonumber
&\,\Bigl({\upsilon}^0_n + {\upsilon}^1_n + \cdots + {\upsilon}^{i-1}_n - m { \upsilon}^i_n + {\upsilon}^{i+1}_n + \cdots + {\upsilon}^m_n\Bigr)\Bigr\}.
\end{align}
Similarly for the barycenter $\kappa^{m}_{i}$ of the oppositive to the vertex ${\upsilon}^i$ $(m-1)$-face $\sigma^{m}_{\neg i}$ of $\sigma^m$ we have
\begin{align*}
\kern0.5cm \kappa^{m}_{i} - {\upsilon}^i =
\Bigl\{&\Bigl[\frac{1}{m}\,\Bigl({\upsilon}^0_1 + {\upsilon}^1_1 + \cdots + {\upsilon}^{i-1}_1 + {\upsilon}^{i+1}_1 + \cdots + {\upsilon}^m_1\Bigr)- {\upsilon}^i_1 \Bigr], \ldots ,\\
&\Bigl[\frac{1}{m}\,\Bigl({\upsilon}^0_n + {\upsilon}^1_n + \cdots + {\upsilon}^{i-1}_n + {\upsilon}^{i+1}_n + \cdots + {\upsilon}^m_n \Bigr)- {\upsilon}^i_n \Bigr]\Bigr\}
\Longleftrightarrow
\end{align*}
\begin{align}\label{eq:Km2}
\kappa^{m}_{i} - {\upsilon}^i =
\frac{1}{m}\,\Bigl\{&\Bigl({\upsilon}^0_1 + {\upsilon}^1_1 + \cdots + {\upsilon}^{i-1}_1 - m {\upsilon}^i_1 + {\upsilon}^{i+1}_1 + \cdots + {\upsilon}^m_1\Bigr), \ldots ,\\ \nonumber
&\,\Bigl({\upsilon}^0_n + {\upsilon}^1_n + \cdots + {\upsilon}^{i-1}_n - m {\upsilon}^i_n + {\upsilon}^{i+1}_n + \cdots + {\upsilon}^m_n\Bigr)\Bigr\}.
\end{align}
Companying Eq.~(\ref{eq:Km1}) and Eq.~(\ref{eq:Km2}) we obtain
\begin{equation}\label{eq:Km3}
(m+1) (\kappa^{m} - {\upsilon}^i) = m (\kappa^{m}_{i} - {\upsilon}^i), \quad i=0, 1, \ldots, m.
\end{equation}
Thus, the points ${\upsilon }^i$, $\kappa^m$ and $\kappa^m_{i}$ are collinear points
and the $m+1$ medians ${\mu}_i^m$, $i=0, 1, \ldots, m$
of $\sigma^m$ concur in the barycenter $\kappa^m$ of $\sigma^m$.
Next, since
$\kappa^{m}_{i} - {\upsilon}^i = (\kappa^{m}_{i} - \kappa^{m}) + (\kappa^{m} - {\upsilon}^i)$,
using Eq.~(\ref{eq:Km3}) we obtain
\begin{equation}\label{eq:Km4}
\| \kappa^{m} - {\upsilon}^i \| = m \| \kappa^{m}_{i} - \kappa^{m} \|, \quad i=0, 1, \ldots, m.
\end{equation}
Therefore, the barycenter $\kappa^m$ of $\sigma^m$
divides each median ${\mu}_i^m$, $i=0, 1, \ldots, m$  in the ratio $1:m$, where the longer segment being on
the side of the vertex of $\sigma^m$.
Thus the theorem is proved.
\end{proof}

The Apollonius Theorem~\ref{ApolloniusTh1} gives the following immediate interesting results (\emph{cf.} \cite[p.68]{Johnson1960}).

\begin{theorem}[Median properties for triangles]\label{ApolloniusCorr}
In any triangle $\Delta A B C$ with vertices $A$, $B$, $C$ and lengths of its sides $\ell (\,\,\overline{\!\!AB})$, $\ell (\,\,\overline{\!\!AC})$ and
$\ell (\,\,\overline{\!\!BC})$,
if\,\,\, $\overline{\!\!AD}$,\,\, $\overline{\!\!BE}$ and\,\, $\overline{\!\!CF}$ are its medians with lengths, respectively,
$\ell (\,\,\overline{\!\!AD})$, $\ell (\,\,\overline{\!\!BE})$ and\, $\ell (\,\,\overline{\!\!CF})$ and if\, $M$
is the median point of intersection of the medians, then the following hold:
\begin{align}
\ell (\,\,\overline{\!\!AD})^{2} + \ell (\,\,\overline{\!\!BE})^{2} + \ell (\,\,\overline{\!\!CF})^{2} &= \frac{3}{4} \bigl[\ell (\,\,\overline{\!\!AB})^{2} +
\ell (\,\,\overline{\!\!AC})^{2} + \ell (\,\,\overline{\!\!BC})^{2}\bigr], \label{eq:ApolloniusCorr1}\\[0.15cm]
\ell (\,\,\overline{\!\!MA})^{2} + \ell (\,\,\overline{\!\!MB})^{2} + \ell (\,\,\overline{\!\!MC})^{2} &= \frac{1}{3} \bigl[\ell (\,\,\overline{\!\!AB})^{2} +
\ell (\,\,\overline{\!\!AC})^{2} + \ell (\,\,\overline{\!\!BC})^{2}\bigr].\label{eq:ApolloniusCorr2}
\end{align}
\end{theorem}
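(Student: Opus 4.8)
The plan is to obtain both identities directly from Apollonius' Theorem~\ref{ApolloniusTh1}, exploiting the symmetry among the three medians. Equation~(\ref{eq:ApolloniusCorr1}) will follow by writing the Apollonius relation~(\ref{eq:Apollonius}) once for each of the three medians and summing the results; Equation~(\ref{eq:ApolloniusCorr2}) will then be a short corollary of~(\ref{eq:ApolloniusCorr1}) together with the ratio in which the median point $M$ divides each median.

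First I would rearrange~(\ref{eq:Apollonius}) into the form $4\,\ell(\overline{AD})^2 = 2\,\ell(\overline{AB})^2 + 2\,\ell(\overline{AC})^2 - \ell(\overline{BC})^2$, isolating the squared median length, and write the two analogous relations for $\overline{BE}$ (apex $B$, opposite side $\overline{AC}$) and for $\overline{CF}$ (apex $C$, opposite side $\overline{AB}$), obtained by the cyclic relabeling $A\to B\to C\to A$. Adding these three relations, each squared side length appears exactly twice with coefficient $+2$ (as one of the two edges meeting the two other apices) and once with coefficient $-1$ (as the side opposite its own median), so its net coefficient on the right is $3$. Dividing by $4$ then yields~(\ref{eq:ApolloniusCorr1}).

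For~(\ref{eq:ApolloniusCorr2}) I would invoke the case $m=2$ of the generalized Commandino Theorem~\ref{SimCommandinoTh} (equivalently, the classical centroid property), by which $M$ divides each median in the ratio $1:2$ with the longer segment on the vertex side; hence $\ell(\overline{MA}) = \tfrac{2}{3}\,\ell(\overline{AD})$, and likewise $\ell(\overline{MB}) = \tfrac{2}{3}\,\ell(\overline{BE})$ and $\ell(\overline{MC}) = \tfrac{2}{3}\,\ell(\overline{CF})$. Squaring and summing gives $\ell(\overline{MA})^2 + \ell(\overline{MB})^2 + \ell(\overline{MC})^2 = \tfrac{4}{9}\bigl[\ell(\overline{AD})^2 + \ell(\overline{BE})^2 + \ell(\overline{CF})^2\bigr]$, and substituting~(\ref{eq:ApolloniusCorr1}) collapses the constant via $\tfrac{4}{9}\cdot\tfrac{3}{4} = \tfrac{1}{3}$, which is precisely~(\ref{eq:ApolloniusCorr2}).

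The argument is essentially bookkeeping, so the only step that demands genuine care is the coefficient tally in the summation: for each median one must correctly identify which side is the ``opposite'' side (carrying the $-1$) versus the two edges meeting the apex (each carrying $+2$), so that the symmetric cancellation producing the uniform coefficient $3$ really occurs. A secondary point worth stating explicitly is the orientation of the $1:2$ split, namely that the vertex-side segment is $\tfrac{2}{3}$ of the full median rather than $\tfrac{1}{3}$; reversing these would replace the factor $\tfrac{4}{9}$ by $\tfrac{1}{9}$ and destroy the final constant $\tfrac{1}{3}$.
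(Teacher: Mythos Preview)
Your proposal is correct. The paper does not supply an explicit proof of Theorem~\ref{ApolloniusCorr}, treating it as a classical fact and citing \cite[p.68]{Johnson1960}; however, your argument is precisely the $m=2$ specialization of the paper's proof of the generalized Theorem~\ref{corr:corrgen}, which likewise sums the Apollonius relation~(\ref{eq:2.4}) over all medians with the same coefficient count and then invokes the Commandino ratio~(\ref{eq:Km3}) to pass from~(\ref{eq:corrgen1}) to~(\ref{eq:corrgen2}).
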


Straightforward generalizations of the above results are given as follows.

\begin{theorem}[Median properties for simplices]\label{corr:corrgen}
Let $\sigma^m = [{\upsilon}^0, {\upsilon}^1, \ldots, {\upsilon}^m]$
be an $m$-simplex in\/ ${\mathbb{R}}^n$ $(n \geqslant m)$ with barycenter
$\kappa^m$.
Assume that $\kappa_i^m$ is the barycenter of the
$i$-th $(m-1)$-face $\sigma^{m}_{\neg i}$ of $\sigma^m$ and
let
$\mu_i^m = [{\upsilon}^i,\, \kappa_i^m]$ be the $i$-th median that corresponds to the vertex ${\upsilon}^i$ for $i=0, 1, \ldots, m$.
Then the following hold:
\begin{align}
\sum_{i=0}^{m} \| \mu_i^m \|^2 &= \frac{m+1}{m^2} \sum_{p=0}^{m-1} \sum_{q=p+1}^{m}\| {\upsilon}^p - {\upsilon}^q \|^2, \label{eq:corrgen1}\\[0.15cm]
\sum_{i=0}^{m} \| \kappa^{m} - {\upsilon}^i \|^2 &= \frac{1}{m+1} \sum_{p=0}^{m-1} \sum_{q=p+1}^{m}\| {\upsilon}^p - {\upsilon}^q \|^2. \label{eq:corrgen2}
\end{align}
\end{theorem}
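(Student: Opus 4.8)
The plan is to deduce the first identity~(\ref{eq:corrgen1}) by summing the generalized Apollonius formula~(\ref{eq:2.4}) over all vertices, and then to obtain the second identity~(\ref{eq:corrgen2}) from the first by invoking the collinearity relation~(\ref{eq:Km3}) established in the proof of Theorem~\ref{SimCommandinoTh}. Throughout I would abbreviate the sum of squared edge lengths of $\sigma^m$ by $E = \sum_{p=0}^{m-1}\sum_{q=p+1}^{m}\|{\upsilon}^p-{\upsilon}^q\|^2$, so that both right-hand sides are just scalar multiples of $E$.

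First I would establish~(\ref{eq:corrgen1}). Summing~(\ref{eq:2.4}) over $i=0,1,\ldots,m$ yields
\[
\sum_{i=0}^{m}\|\mu_i^m\|^2
= \frac{1}{m^2}\Biggl( m\sum_{i=0}^{m}\,\v j0im \nr {\upsilon}i{\upsilon}j^2
- \sum_{i=0}^{m}\sum_{\substack{p=0\\p\ne i}}^{m-1}\v q{p+1}im \nr {\upsilon}p{\upsilon}q^2\Biggr),
\]
so the computation reduces to counting, in each of the two inner terms, how many times an unordered edge $\{{\upsilon}^p,{\upsilon}^q\}$ is summed. In the first term the ordered pairs $(i,j)$ and $(j,i)$ each contribute once, so every edge is counted twice and that term equals $m\cdot 2E = 2mE$. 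In the second term the edge $\{{\upsilon}^p,{\upsilon}^q\}$ occurs in the $i$-th face $\sigma^m_{\neg i}$ precisely when $i\notin\{p,q\}$, that is for $m-1$ values of $i$, so that term equals $(m-1)E$. Combining these gives $\sum_{i=0}^{m}\|\mu_i^m\|^2 = \frac{1}{m^2}\bigl(2m-(m-1)\bigr)E = \frac{m+1}{m^2}E$, which is~(\ref{eq:corrgen1}).

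Finally, for~(\ref{eq:corrgen2}) I would take Euclidean norms in the collinearity relation~(\ref{eq:Km3}), namely $(m+1)(\kappa^m-{\upsilon}^i) = m(\kappa_i^m-{\upsilon}^i)$, and use $\|\mu_i^m\| = \|\kappa_i^m-{\upsilon}^i\|$ to obtain $\|\kappa^m-{\upsilon}^i\| = \frac{m}{m+1}\|\mu_i^m\|$. Squaring, summing over $i$, and substituting~(\ref{eq:corrgen1}) then gives
\[
\sum_{i=0}^{m}\|\kappa^m-{\upsilon}^i\|^2
= \frac{m^2}{(m+1)^2}\sum_{i=0}^{m}\|\mu_i^m\|^2
= \frac{m^2}{(m+1)^2}\cdot\frac{m+1}{m^2}\,E
= \frac{1}{m+1}\,E,
\]
which is~(\ref{eq:corrgen2}).

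The main obstacle is the combinatorial bookkeeping in the first step: one must verify with confidence that, after summation over $i$, each edge is weighted exactly $2m$ times by the first term and exactly $m-1$ times by the second, so that the coefficients collapse to $\frac{m+1}{m^2}$. As a self-contained alternative for~(\ref{eq:corrgen2}) that bypasses Theorem~\ref{SimCommandinoTh}, I could instead expand $\|{\upsilon}^i-{\upsilon}^j\|^2 = \|{\upsilon}^i\|^2 - 2\langle{\upsilon}^i,{\upsilon}^j\rangle + \|{\upsilon}^j\|^2$ and use $\kappa^m = \frac{1}{m+1}\sum_{j}{\upsilon}^j$ to derive the classical identity $\sum_{i=0}^{m}\sum_{j=0}^{m}\|{\upsilon}^i-{\upsilon}^j\|^2 = 2(m+1)\sum_{i=0}^{m}\|{\upsilon}^i-\kappa^m\|^2$, whose left-hand side equals $2E$; this delivers~(\ref{eq:corrgen2}) directly.
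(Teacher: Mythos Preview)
Your proposal is correct and follows essentially the same route as the paper: summing~(\ref{eq:2.4}) over $i$, counting that each edge is hit twice in the first term and $m-1$ times in the second, and then deriving~(\ref{eq:corrgen2}) from~(\ref{eq:corrgen1}) via the collinearity relation~(\ref{eq:Km3}). The only addition is your alternative direct derivation of~(\ref{eq:corrgen2}) from the inner-product expansion, which the paper does not give but which is a valid and slightly more self-contained variant.
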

\begin{proof}
The proof of Eq.~(\ref{eq:corrgen1}) can be obtained using Eq.~(\ref{eq:2.4}) for $i=0, 1, \ldots, m$ and
by adding the corresponding expressions to obtain the term $\sum_{i=0}^{m} \| \mu_i^m \|^2$.
Since an edge $[{\upsilon}^i, {\upsilon}^j]$ of $\sigma^m$ corresponds to the two vertices ${\upsilon}^i$ and ${\upsilon}^j$ of $\sigma^m$
then for the first term of the right hand side of Eq.~(\ref{eq:2.4}) we have
\begin{equation}\label{eq:corrgen3}
\sum_{i=0}^{m} \v j0im  \nr {\upsilon}i{\upsilon}j^2 = 2 \sum_{p=0}^{m-1} \sum_{q=p+1}^{m}\| {\upsilon}^p - {\upsilon}^q \|^2,
\end{equation}
since each edge is added twice which results to twice the sum of the squares of the lengths of all the edges of $\sigma^m$.

Also, since an edge $[{\upsilon}^i, {\upsilon}^j]$ of $\sigma^m$ can constitute an edge
of the $k$-th $(m-1)$-face $\sigma^{m}_{\neg k}$ of $\sigma^m$ for the $m-1$ vertices ${\upsilon}^k$ of $\sigma^m$
where $k$ is different from $i$ and $j$, then for the second term of the right hand side of Eq.~(\ref{eq:2.4}) we have
\begin{equation}\label{eq:corrgen4}
 -\sum_{i=0}^{m}
    \sum_{\substack{p=0\\p\ne i}}^{m-1}
      \v q{p+1}im  \nr {\upsilon}p{\upsilon}q^2 = -(m-1) \sum_{p=0}^{m-1} \sum_{q=p+1}^{m}\| {\upsilon}^p - {\upsilon}^q \|^2,
\end{equation}
since each edge is added $m-1$ times which results to $m-1$ times the sum of the squares of the lengths of all the edges of $\sigma^m$.

Companying Eq.~(\ref{eq:2.4}), Eq.~(\ref{eq:corrgen3}) and Eq.~(\ref{eq:corrgen4}) we obtain
\[
\sum_{i=0}^{m} \| \mu_i^m \|^2 = \frac{1}{m^2} \Biggl(2 m \sum_{p=0}^{m-1} \sum_{q=p+1}^{m}\| {\upsilon}^p - {\upsilon}^q \|^2-
    (m-1) \sum_{p=0}^{m-1} \sum_{q=p+1}^{m}\| {\upsilon}^p - {\upsilon}^q \|^2 \Biggr),
\]
which proves Eq.~(\ref{eq:corrgen1}).
Next, using Eq.~(\ref{eq:Km3}), we have
\begin{equation}\label{eq:corrgen5}
\| \mu_i^m \|^2 = \|\kappa^{m}_{i} - {\upsilon}^i \|^2 = \frac{(m+1)^2}{m^2} \| \kappa^{m} - {\upsilon}^i \|^2, \quad \mbox{ for } \quad i=0, 1, \ldots, m.
\end{equation}
Companying the above relations and Eq.~(\ref{eq:corrgen1}) we obtain Eq.~(\ref{eq:corrgen2}).
Thus the theorem is proved.
\end{proof}

\begin{remark}\label{rem:Fermat}
We note in passing that, Eq.~(\ref{eq:corrgen2}) is related to
the well known \emph{Fermat's problem}~\cite{Coxeter1969,GueronT2002,Johnson1960}.
Specifically, in 1643
Fermat\footnote{\,Pierre de Fermat (1607 -- 1665), French mathematician.},
motivated by a letter of
Descartes\footnote{\,Ren\'{e} Descartes (1596 -- 1650), French philosopher and mathematician.},
proposed the following problem:
``\emph{Find a point $F$ for which the sum of the Euclidean distances from $F$ to the vertices $A$, $B$ and $C$
of a given triangle $\Delta A B C$ is minimized.}''
This problem was first raised by Fermat in a private letter to
Torricelli\footnote{\,Evangelista Torricelli (1608 -- 1647), Italian physicist and mathematician.}
who solved it.
Torricelli's solution was published in 1659 by his student
Viviani\footnote{\,Vincenzo Viviani (1622 -- 1703), Italian mathematician.}.
Therefor, \emph{Fermat's point}\/ $F$ of a triangle is also called \emph{Torricelli point}\/ or \emph{Fermat-Torricelli point}.
Notice that, the Fermat point $F$ of an equilateral (regular) triangle coincides with the
barycenter (centroid) $\kappa^2$ of the triangle~(\emph{cf.} Eq.~(\ref{eq:corrgen2})).
Also, it can be easily proved that in the case of regular simplices $\sigma^m$ in\/ ${\mathbb{R}}^n$
with diameter ${\rm diam}(\sigma^m)$ it holds that
$\sum_{i=0}^{m} \| \kappa^{m} - {\upsilon}^i \| =\sqrt{m(m+1)/2}\,\, {\rm diam}(\sigma^m)$ (\emph{cf}.\ Corollary~\ref{cor:reg-Fermat}).
It is worth noting that, in 1977 Alexander \cite{Alexander1977} has shown that
only the regular simplex has minimal diameter among all simplices inscribed in a sphere of ${\mathbb{R}}^n$ and containing
the center of the sphere.
\end{remark}

\begin{remark}\label{rem:reg-prop}
It is worth mentioning that, the regular simplex besides its symmetry exhibits interesting and useful properties
including, among others, the maximizing properties of the regular simplex
that have been given by Tanner in 1974~\cite{Tanner1974}.
Specifically, Tanner has shown
that the regular simplex maximizes the sum of the squared contents of all
$m$-faces, for all $m$, when the sum of squared edge (line) lengths is fixed.
Thus, the regular simplex has the largest total length of all
edges (joining lines), total area of all 2-faces (triangles), total volume of all 3-faces (tetrahedra),
\emph{etc.}, for a fixed sum of squared edge lengths~\cite{Tanner1974}.
In addition,
in 1977 Alexander~\cite{Alexander1977} proved the conjecture of
Sallee\footnote{\,George Thomas Sallee (1940 -- 2019), American mathematician.}
which states that:
\emph{``Only the regular simplex has maximal width among all simplices inscribed in a sphere of ${\mathbb{R}}^n$.''}
Note that, the width of the regular simplex in ${\mathbb{R}}^n$ with diameter 1 is approximately $\sqrt{2 / n}$
and specifically $\sqrt{2 / (n +1)}$ if $n$ is odd and $\sqrt{2 (n + 1)}/\sqrt{n (n + 2)}$ if $n$ is even
(\emph{cf}.\ \cite{Alexander1977}, \cite{GritzmannK1992}, \cite{Har-PeledR2023}),
for a short proof, see Theorem \ref{thm:reg-width}.
In addition,
regular simplices are orthocentric (\emph{cf}.\ Theorem~\ref{thm:correg}).
\end{remark}

\begin{theorem}[Medians of regular simplices]\label{thm:correg}
Let $\sigma^m = [{\upsilon}^0, {\upsilon}^1, \ldots, {\upsilon}^m]$
be a regular $m$-simplex in\/ ${\mathbb{R}}^n$ $(n \geqslant m)$
with edge length ${\rm diam}(\sigma^m)$ and barycenter $\kappa^m$.
Let $\kappa_i^m$ be the barycenter of the
$i$-th $(m-1)$-face $\sigma^{m}_{\neg i}$ of $\sigma^m$ and
let $\mu_i^m = [{\upsilon}^i,\, \kappa_i^m]$ be the $i$-th median that corresponds to the vertex ${\upsilon}^i$ for $i=0, 1, \ldots, m$.
Then, a) the $i$-th median of $\sigma^m$ coincides with its corresponding $i$-th altitude,
b) $\sigma^m$ is orthocentric and c) the orthocenter $o^m$ of $\sigma^m$ coincides with its barycenter $\kappa^m$.
\end{theorem}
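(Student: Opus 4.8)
The plan is to prove part (a) by a short orthogonality computation that uses only the equality of all edge lengths, and then to obtain parts (b) and (c) immediately from (a) combined with the Generalization of Commandino's Theorem~\ref{SimCommandinoTh}.

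First I would fix a vertex ${\upsilon}^i$ and introduce the edge vectors $w^l = {\upsilon}^l - {\upsilon}^i$ for $l \neq i$; these span the affine hull of the opposite face $\sigma^m_{\neg i}$, the median direction is $\kappa_i^m - {\upsilon}^i = \tfrac{1}{m}\sum_{l\neq i} w^l$, and the edges of that face are the differences $w^j - w^k$ for $j,k \neq i$. Regularity gives $\|w^l\|^2 = d^2$ with $d = {\rm diam}(\sigma^m)$, and expanding $\|{\upsilon}^j - {\upsilon}^k\|^2 = \|w^j - w^k\|^2 = d^2$ forces $w^j\cdot w^k = d^2/2$ for all distinct $j,k \neq i$.

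The key step is to show that $\bigl(\sum_{l\neq i} w^l\bigr)\cdot w^j$ does not depend on $j$: isolating the term $l=j$ from the remaining $m-1$ terms gives $d^2 + (m-1)\,d^2/2 = \tfrac{m+1}{2}\,d^2$, which is independent of the choice of $j \neq i$. Hence $(\kappa_i^m - {\upsilon}^i)\cdot(w^j - w^k) = 0$ for all $j,k \neq i$, so $\kappa_i^m - {\upsilon}^i$ is orthogonal to every edge, and therefore to the whole $(m-1)$-face $\sigma^m_{\neg i}$. Since $\kappa_i^m$ itself lies in this face, the segment $\mu_i^m = [{\upsilon}^i,\kappa_i^m]$ is exactly the perpendicular from ${\upsilon}^i$ to its opposite face, i.e.\ the $i$-th median is the $i$-th altitude, which proves (a).

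For (b) and (c), I would simply apply Theorem~\ref{SimCommandinoTh}: the $m+1$ medians of any $m$-simplex concur in the barycenter $\kappa^m$. By part (a) these medians are, in the regular case, precisely the altitudes, so the $m+1$ altitudes are concurrent and meet at $\kappa^m$; this gives orthocentricity (b) and the identification $o^m = \kappa^m$ of the orthocenter with the barycenter (c). The only real work is the orthogonality argument of part (a), and the one point to verify carefully is that the count of $m-1$ off-diagonal edges and the common inner product $d^2/2$ combine into a quantity independent of $j$ — this $j$-independence is precisely what makes the median direction perpendicular to the opposite face. Everything after that is a direct appeal to the already-established Commandino generalization.
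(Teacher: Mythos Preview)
Your proof is correct, but it takes a genuinely different route from the paper's. You prove orthogonality directly by computing inner products of the edge vectors $w^l$ and observing that regularity forces $w^j\cdot w^k = d^2/2$, which makes $\bigl(\sum_{l\ne i} w^l\bigr)\cdot w^j$ independent of $j$; this is clean, elementary, and self-contained. The paper instead leverages its main result, the generalized Apollonius formula (Eq.~(\ref{eq:2.4})), to compute the two squared lengths $\|{\upsilon}^i-\kappa_i^m\|^2=\tfrac{m+1}{2m}d^2$ and $\|{\upsilon}^j-\kappa_i^m\|^2=\tfrac{m-1}{2m}d^2$ (the latter via a second application to the face $\sigma^m_{\neg i}$ together with Eq.~(\ref{eq:Km3})), and then invokes the converse of the Pythagorean theorem on the triangle $[{\upsilon}^i,\kappa_i^m,{\upsilon}^j]$ to conclude the right angle at $\kappa_i^m$. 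Your approach is shorter and does not need the Apollonius machinery at all; the paper's approach, while more roundabout, is deliberately chosen to exhibit Eq.~(\ref{eq:2.4}) in action and to motivate the ``generalized Pythagorean equation'' (\ref{eq:GenPyth}) that follows. Both proofs finish parts (b) and (c) identically, by appealing to Theorem~\ref{SimCommandinoTh}.
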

\begin{proof}
By Eq.~(\ref{eq:2.4}) for the $i$-th median we obtain
\[
\|{\upsilon}^i -\kappa_i^m\|^2 = \frac{1}{m^2} \left( m^2 - \binom{m}{2} \right) {\rm diam}(\sigma^m)^2, \quad i=0, 1, \ldots, m,
\]
or equivalently we have
\begin{equation}\label{eq:correg1}
\|{\upsilon}^i -\kappa_i^m\|^2 = \frac{m+1}{2m}\,{\rm diam}(\sigma^m)^2, \quad i=0, 1, \ldots, m.
\end{equation}
Let $\mu_{ij}^m = [{\upsilon}^j,\, \kappa_{ij}^m]$ be the $j$-th median, $j=0, 1, \ldots, m$, $j\neq i$, that corresponds to the vertex ${\upsilon}^j$
 of the $i$-th $(m-1)$-face $\sigma^{m}_{\neg i}$.
Then, similarly to the above we have
\begin{equation}\label{eq:correg2}
\|{\upsilon}^j -\kappa_{ij}^m\|^2
= \frac{m}{2(m-1)}\,{\rm diam}(\sigma^m)^2.
\end{equation}
Using Eq.~(\ref{eq:Km3}) we obtain
\begin{equation}\label{eq:correg23}
\|{\upsilon}^j -\kappa_{i}^m\|^2 = \frac{(m-1)^2}{m^2}\|{\upsilon}^j -\kappa_{ij}^m\|^2.
\end{equation}
Companying Eqs.~(\ref{eq:correg2}) and (\ref{eq:correg23}) we have
\begin{equation}\label{eq:correg3}
\|{\upsilon}^j -\kappa_{i}^m\|^2 = \frac{m-1}{2m}\,{\rm diam}(\sigma^m)^2, \quad j=0, 1, \ldots, m, \kern0.3cm j\neq i.
\end{equation}
Companying Eqs.~(\ref{eq:correg1}) and (\ref{eq:correg3}) we obtain
\[
\|{\upsilon}^i -\kappa_i^m\|^2 + \|{\upsilon}^j -\kappa_{i}^m\|^2 = {\rm diam}(\sigma^m)^2.
\]
Since $\|{\upsilon}^i -{\upsilon}^j\|^2 = {\rm diam}(\sigma^m)^2$ we have
\begin{equation}\label{eq:GenPyth}
\|{\upsilon}^i -\kappa_i^m\|^2 + \|{\upsilon}^j -\kappa_{i}^m\|^2 = \|{\upsilon}^i -{\upsilon}^j\|^2.
\end{equation}
Thus, by the Pythagorean theorem for right triangles for the 2-simplex $[{\upsilon}^i, \kappa_i^m, {\upsilon}^j]$
we obtain that the $i$-th median $\mu_i^m = [{\upsilon}^i, \kappa_i^m]$ of $\sigma^m$ is also its
$i$-th altitude.
Also, due to Theorem~\ref{SimCommandinoTh}, $\sigma^m$ is orthocentric and its orthocenter $o^m$ coincides with its barycenter $\kappa^m$.
Thus the theorem is proved.
\end{proof}

\begin{remark}
Allow us to point out that, the Pythagorean theorem for right triangles due to
Pythagoras\footnote{\,Pythagoras of Samos (\emph{c}.\ 570 -- \emph{c}.\ 495 B.C.), Greek mathematician and philosopher.}
states that: \emph{``The sum of the squares on the legs of a right triangle is equal to the square on the hypotenuse (the side opposite the right angle).''}
This theorem is included by
Euclid\footnote{\,Euclid (\emph{c}.\ 323 -- \emph{c}.\ 285 B.C.), Greek mathematician, considered as the ``Father of Geometry''.}
in his seminal work \emph{``Elements''}\/ and specifically
in Book I, Proposition~47~(\emph{cf.}~\cite{Euclid1482}).
\end{remark}

\begin{remark}
Eq.~(\ref{eq:GenPyth}) could be named, the \emph{generalized Pythagorean equation for regular simplices}.
\end{remark}

Based on the above, in conclusion, we point out the following theorem.

\begin{theorem}[Generalization of Pythagoras' theorem for regular simplices]\label{thm:GenPyth}
Let $\sigma^m = [{\upsilon}^0, {\upsilon}^1, \ldots, {\upsilon}^m]$
be a regular $m$-simplex in\/ ${\mathbb{R}}^n$ $(n \geqslant m)$
and let $\kappa_i^m$  for $i=0, 1, \ldots, m$ be the barycenter of the
$i$-th $(m-1)$-face $\sigma^{m}_{\neg i}$ opposite to vertex ${\upsilon}^i$ of $\sigma^m$. Then it holds that
\begin{equation}\label{eqthm:GenPyth}
\|{\upsilon}^i -\kappa_i^m\|^2 + \|{\upsilon}^j -\kappa_{i}^m\|^2 = \|{\upsilon}^i -{\upsilon}^j\|^2, \quad i,j=0, 1, \ldots, m, \kern0.3cm j\neq i.
\end{equation}
\end{theorem}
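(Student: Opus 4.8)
The plan is to observe that equation~(\ref{eqthm:GenPyth}) is nothing but the relation~(\ref{eq:GenPyth}) that already emerged inside the proof of Theorem~\ref{thm:correg}; so the whole argument reduces to assembling two squared-length computations and using regularity. First I would dispose of the right-hand side: since $\sigma^m$ is regular, every edge has length ${\rm diam}(\sigma^m)$, whence $\nr{\upsilon}i{\upsilon}j^2 = {\rm diam}(\sigma^m)^2$ for all $i \neq j$. It therefore suffices to show that the two terms on the left sum to ${\rm diam}(\sigma^m)^2$.

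For the first term, note that $\|{\upsilon}^i - \kappa_i^m\|^2$ is exactly the squared length $\|\mu_i^m\|^2$ of the $i$-th median, so I would substitute the regular edge lengths into the generalized Apollonius formula~(\ref{eq:2.4}). The first sum there contributes $m\cdot m\,{\rm diam}(\sigma^m)^2$, while the second, which ranges over the $\binom{m}{2}$ edges of the opposite $(m-1)$-face, contributes $\binom{m}{2}{\rm diam}(\sigma^m)^2$; after the factor $1/m^2$ this collapses to~(\ref{eq:correg1}), namely $\|{\upsilon}^i-\kappa_i^m\|^2 = \tfrac{m+1}{2m}\,{\rm diam}(\sigma^m)^2$.

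The more delicate term is $\|{\upsilon}^j - \kappa_i^m\|^2$ for $j \neq i$, the distance from a vertex of the face $\sigma^m_{\neg i}$ to the barycenter of \emph{that same} face. Here I would regard $\sigma^m_{\neg i}$ as an $(m-1)$-simplex in its own right and apply the Apollonius formula to its $j$-th median $\mu_{ij}^m = [{\upsilon}^j, \kappa_{ij}^m]$, obtaining~(\ref{eq:correg2}), $\|{\upsilon}^j-\kappa_{ij}^m\|^2 = \tfrac{m}{2(m-1)}\,{\rm diam}(\sigma^m)^2$. The step that passes from $\kappa_{ij}^m$, the barycenter of an $(m-2)$-face, back to the face-barycenter $\kappa_i^m$ is the collinearity-and-ratio relation~(\ref{eq:Km3}) from Theorem~\ref{SimCommandinoTh}, applied internally to $\sigma^m_{\neg i}$; this yields~(\ref{eq:correg23}) and hence~(\ref{eq:correg3}), $\|{\upsilon}^j-\kappa_i^m\|^2 = \tfrac{m-1}{2m}\,{\rm diam}(\sigma^m)^2$. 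I expect this reindexing, keeping straight which barycenter belongs to which sub-face and correctly shifting $m \mapsto m-1$, to be the only genuine bookkeeping obstacle.

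Finally I would simply add the two computed terms: since $\tfrac{m+1}{2m} + \tfrac{m-1}{2m} = 1$, the left-hand side collapses to ${\rm diam}(\sigma^m)^2 = \nr{\upsilon}i{\upsilon}j^2$, which is precisely the right-hand side. This establishes~(\ref{eqthm:GenPyth}) for every pair $i \neq j$, and the interpretation via the Pythagorean theorem applied to the right triangle $[{\upsilon}^i, \kappa_i^m, {\upsilon}^j]$ (as already noted for Theorem~\ref{thm:correg}) explains the name.
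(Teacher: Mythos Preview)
Your proposal is correct and takes essentially the same approach as the paper: the paper's proof consists of the single line ``The proof follows from Eq.~(\ref{eq:GenPyth}),'' and you both recognize this and then faithfully reconstruct the derivation of~(\ref{eq:GenPyth}) from the proof of Theorem~\ref{thm:correg}. The only difference is that you spell out in full the two squared-length computations (\ref{eq:correg1}) and (\ref{eq:correg3}) and their sum, whereas the paper simply points back to the equation already obtained.
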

\begin{proof}
The proof follows from Eq.~(\ref{eq:GenPyth}).
\end{proof}

\begin{remark}
In 1803
Carnot\footnote{\,Lazare Nicolas Marguerite Carnot (1753 -- 1823), French mathematician, physicist and politician.}
\cite{Carnot1803}
gave a theorem known as \emph{Carnot's theorem}\/ which states that:
\emph{``The sum of the distances from the circumcenter to the three sides of a
triangle is equal to the radius (circumradius) of the circumscribed circle plus the radius (inradius) of
the inscribed circle''}~\cite{Perrier2007}.
The most known application of Carnot's theorem is in the proof of
the \emph{Japanese theorem for a cyclic polygon}\/
(\emph{i.e.} polygon with vertices upon which a circle can be circumscribed)
which states that: \emph{``In any triangulation of a cyclic polygon, the sum of inradii of triangles is constant}''.
We note in passing that, according to
Hayashi\footnote{\,Tsuruichi Hayashi (1873 -- 1935), Japanese mathematician and historian.}:
``It was the ancient custom of
Japanese mathematicians to inscribe their discoveries on
tablets which were hung in the temples, to the glory of the
gods and the honor of the authors''.
The {Japanese theorem for cyclic polygons} has been exhibited in 1800~\cite[p.193]{Johnson1960} and thus it is also known as \emph{Japanese temple theorem}.
\end{remark}

Based on the above we give the following generalization of Carnot's theorem
for regular simplices.

\begin{theorem}[Generalization of Carnot's theorem for regular simplices]\label{thm:GenCarnot}
Let $\sigma^m = [{\upsilon}^0, {\upsilon}^1, \ldots, {\upsilon}^m]$
be a regular $m$-simplex in\/ ${\mathbb{R}}^n$ $(n \geqslant m)$
with barycenter $\kappa^m$
and let $\kappa_i^m$  for $i=0, 1, \ldots, m$ be the barycenter of the
$i$-th $(m-1)$-face $\sigma^{m}_{\neg i}$ opposite to vertex ${\upsilon}^i$ of $\sigma^m$.
Then\\
(a) the distance from the barycenter to the $j$-th $(m-1)$-face $\sigma^{m}_{\neg j}$ is given by
\begin{equation}\label{eqthm:GenCarnot1}
\min_{x \in \sigma^{m}_{\neg j}} \|\kappa^{m} - x\| = \|\kappa^m - \kappa_{j}^m\|, \quad \forall\, j=0, 1, \ldots, m,
\end{equation}
(b) the
length of the barycentric circumradius $\beta_{\rm cir}^m$ of $\sigma^m$ is given by
\begin{equation}\label{eqthm:GenCarnot2}
\beta_{\rm cir}^m = \|{\upsilon}^j - \kappa^m\|, \quad \forall\, j=0, 1, \ldots, m,
\end{equation}
(c) the
length of the barycentric inradius $\beta_{\rm inr}^m$ of $\sigma^m$ is given by
\begin{equation}\label{eqthm:GenCarnot3}
\beta_{\rm inr}^m = \|\kappa^m - \kappa_{j}^m\|, \quad \forall\, j=0, 1, \ldots, m,
\end{equation}
and (d) it holds that \vspace*{-0.2cm}
\begin{equation}\label{eqthm:GenCarnot4}
\sum_{i=0}^m \|\kappa^m - \kappa_i^m\| = \beta_{\rm cir}^m + \beta_{\rm inr}^m.
\end{equation}
\end{theorem}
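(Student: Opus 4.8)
The plan is to reduce all four claims to the scaling relation $\|\kappa^m - {\upsilon}^i\| = m\,\|\kappa^m - \kappa_i^m\|$ of Eq.~(\ref{eq:Km4}) together with the median--altitude coincidence of Theorem~\ref{thm:correg}. I would begin with part~(b). The smallest spherical surface centered at $\kappa^m$ that encloses $\sigma^m$ has radius equal to $\max_{0\leqslant j\leqslant m}\|{\upsilon}^j - \kappa^m\|$, because the extreme points of the convex body $\sigma^m$ are precisely its vertices. Specializing Eq.~(\ref{eq:corrgen2}) to the regular case, where every one of the $\binom{m+1}{2}$ edges has length ${\rm diam}(\sigma^m)$, yields $\sum_{i=0}^m\|\kappa^m - {\upsilon}^i\|^2 = \tfrac{m}{2}\,{\rm diam}(\sigma^m)^2$; by the symmetry of a regular simplex all summands coincide, so $\|{\upsilon}^j - \kappa^m\|$ is independent of $j$ and equals $\beta_{\rm cir}^m$, which is Eq.~(\ref{eqthm:GenCarnot2}).

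Next I would establish part~(a), the only genuinely geometric step. By Theorem~\ref{thm:correg} the median $\mu_j^m = [{\upsilon}^j, \kappa_j^m]$ is also the $j$-th altitude, so the line it spans is orthogonal to the affine hull of the opposite face $\sigma^{m}_{\neg j}$ and meets that hull at $\kappa_j^m$. Since $\kappa^m$ lies on this same line (Theorem~\ref{SimCommandinoTh}), the orthogonal projection of $\kappa^m$ onto the hyperplane of $\sigma^{m}_{\neg j}$ is again $\kappa_j^m$. As $\kappa_j^m$ is the barycenter of the face and therefore sits in its relative interior, it is the point of the face nearest to $\kappa^m$, which gives $\min_{x\in\sigma^{m}_{\neg j}}\|\kappa^m - x\| = \|\kappa^m - \kappa_j^m\|$, i.e.\ Eq.~(\ref{eqthm:GenCarnot1}).

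Part~(c) then follows at once: using the boundary decomposition $\vartheta\sigma^m = \bigcup_{i=0}^m \sigma^{m}_{\neg i}$ and the definition in Eq.~(\ref{eq:BarInr}), $\beta_{\rm inr}^m$ is the minimum over $i$ of the face distances found in part~(a). Regularity forces these to be equal---by Eq.~(\ref{eq:Km4}) each $\|\kappa^m - \kappa_i^m\|$ equals $\tfrac{1}{m}\|\kappa^m - {\upsilon}^i\|$, and the latter is constant in $i$ by part~(b)---so the minimum is the common value $\|\kappa^m - \kappa_j^m\|$, which is Eq.~(\ref{eqthm:GenCarnot3}). Finally, part~(d) is arithmetic: Eq.~(\ref{eq:Km4}) gives $\beta_{\rm cir}^m = \|{\upsilon}^j - \kappa^m\| = m\,\|\kappa^m - \kappa_j^m\|$, while part~(c) gives $\beta_{\rm inr}^m = \|\kappa^m - \kappa_j^m\|$; hence $\beta_{\rm cir}^m + \beta_{\rm inr}^m = (m+1)\,\|\kappa^m - \kappa_j^m\|$, and since all $m+1$ terms $\|\kappa^m - \kappa_i^m\|$ share this value their sum equals the same quantity, establishing Eq.~(\ref{eqthm:GenCarnot4}).

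I expect the crux to be the interior-projection claim in part~(a): one must know that the foot of the perpendicular from $\kappa^m$ lands strictly inside the face rather than merely on its affine hull, for otherwise the nearest-point computation could fail. This is precisely where regularity is indispensable, entering through both the median--altitude coincidence and the fact that $\kappa_j^m$, being a barycenter, is a relative-interior point; once part~(a) is secured, the remaining parts are bookkeeping driven by the single scaling identity~(\ref{eq:Km4}).
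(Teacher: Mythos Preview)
Your proposal is correct and follows essentially the same route as the paper: both rest on the median--altitude coincidence of Theorem~\ref{thm:correg} together with the concurrence and $1{:}m$ division of Theorem~\ref{SimCommandinoTh} (specifically Eq.~(\ref{eq:Km4})). The paper's proof is a one-line pointer to these two results and to Eq.~(\ref{eq:correg1}) for the equality of median lengths; you spell out the same logic in full, invoking Eq.~(\ref{eq:corrgen2}) rather than Eq.~(\ref{eq:correg1}) to obtain the constancy of $\|\kappa^m-{\upsilon}^j\|$, which is an equivalent route.
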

\begin{proof}
It is evident that in a regular simplex all medians are of equal length (\emph{cf.} Eq.~(\ref{eq:correg1}))
and in general by Theorem~\ref{SimCommandinoTh} concur in the barycenter that divides each of them in the ratio $1:m$.
Thus the proof can be obtained by Theorem~\ref{thm:correg}.
\end{proof}

\begin{remark}
To clarify the notations in Theorem \ref{thm:GenCarnot}, we note that in the case of regular $m$-simplices
the barycenter $\kappa^m$ is the same with the circumcenter $c_{\rm cir}^m$ and the incenter $c_{\rm inr}^m$.
Also, the barycentric circumradius $\beta_{\rm cir}^m$ is the same with the circumradius $\rho_{\rm cir}^m$
while the barycentric inradius $\beta_{\rm inr}^m$ is the same with the inradius~$\rho_{\rm inr}^m$.
\end{remark}

\section{Applications of the generalized Apollonius theorem}\label{sec:Appl}
\subsection{Simplex and set enclosing}\label{subsec:SimCov}

\begin{remark}
The generalization of the Apollonius theorem for simplices can be applied for obtaining
an estimation of the radius of a spherical surface that encloses a given simplex or
a bounded subset of $\mathbb{R}^n$.
\end{remark}

\begin{lemma}\label{lem:circ}
Assume that $\sigma^m = [{\upsilon}^0, {\upsilon}^1, \ldots, {\upsilon}^m]$
is an $m$-simplex in\/ ${\mathbb{R}}^n$ $(n \geqslant m)$ and let
$\kappa^m$ be its barycenter. Then for each $i=0, 1, \ldots, m$ it holds that:
\begin{equation}\label{eq:3.1}
\|\kappa^{m} - {\upsilon}^i\| = \frac{1}{m + 1} \Biggl(m\v j0im  \nr {\upsilon}i{\upsilon}j^2-
    \sum_{\substack{p=0\\p\ne i}}^{m-1}
      \v q{p+1}im  \nr {\upsilon}p{\upsilon}q^2\Biggr)^{1/2}.
\end{equation}
\end{lemma}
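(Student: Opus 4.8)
The plan is to derive Eq.~(\ref{eq:3.1}) directly from the already-established generalized Apollonius Theorem~\ref{SimApolloniusTh}, which expresses $\|\mu_i^m\|^2$ in terms of the edge lengths via Eq.~(\ref{eq:2.4}). The crucial observation is that the desired quantity $\|\kappa^m - \upsilon^i\|$ and the median length $\|\mu_i^m\| = \|\kappa_i^m - \upsilon^i\|$ differ only by a known constant factor, already computed in the proof of Theorem~\ref{SimCommandinoTh}. So this lemma is essentially a rescaling of Corollary~\ref{MedCor}.

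\medskip

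First I would recall Eq.~(\ref{eq:Km3}) from the proof of the generalized Commandino theorem, namely
$(m+1)(\kappa^m - \upsilon^i) = m(\kappa_i^m - \upsilon^i)$ for each $i=0,1,\ldots,m$. Taking Euclidean norms on both sides gives
\[
(m+1)\,\|\kappa^m - \upsilon^i\| = m\,\|\kappa_i^m - \upsilon^i\| = m\,\|\mu_i^m\|,
\]
since $\mu_i^m = [\upsilon^i,\kappa_i^m]$ by definition, so $\|\mu_i^m\| = \|\kappa_i^m - \upsilon^i\|$. Solving for the target quantity yields $\|\kappa^m - \upsilon^i\| = \frac{m}{m+1}\,\|\mu_i^m\|$.

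\medskip

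Next I would substitute the closed-form expression for $\|\mu_i^m\|$ supplied by Eq.~(\ref{eq:2.4}) (equivalently Corollary~\ref{MedCor}), namely
\[
\|\mu_i^m\| = \frac{1}{m}\Biggl(m\v j0im \nr {\upsilon}i{\upsilon}j^2 - \sum_{\substack{p=0\\p\ne i}}^{m-1}\v q{p+1}im \nr {\upsilon}p{\upsilon}q^2\Biggr)^{1/2}.
\]
Multiplying by the factor $\frac{m}{m+1}$ cancels the leading $\frac{1}{m}$ against the $m$ in the numerator, leaving exactly the factor $\frac{1}{m+1}$ in front of the square root, which is precisely Eq.~(\ref{eq:3.1}). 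This completes the derivation.

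\medskip

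\textbf{Expected main obstacle.} There is no genuine analytic difficulty here, since all heavy lifting was done in Theorem~\ref{SimApolloniusTh}; the only point requiring care is the legitimacy of passing from the vector identity Eq.~(\ref{eq:Km3}) to a scalar identity between norms. Because the two sides of Eq.~(\ref{eq:Km3}) are positive scalar multiples ($m+1$ and $m$, both positive) of the \emph{same} displacement geometry, taking norms is valid and the positive constant simply factors out. One should also confirm that the radicand is nonnegative so that the square root is well defined — this is guaranteed by the remark following Corollary~\ref{MedCor} that the right-hand side of Eq.~(\ref{eq:2.4}) is positive. Thus the proof reduces to assembling these two cited facts.
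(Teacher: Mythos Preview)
Your proposal is correct and follows essentially the same approach as the paper: the paper's proof simply states that the result follows by combining Eq.~(\ref{eq:2.4}) and Eq.~(\ref{eq:Km3}), which is exactly what you do, only with the intermediate algebra spelled out.
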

\begin{proof}
The proof follows by companying Eq.~(\ref{eq:2.4}) and Eq.~(\ref{eq:Km3}).
\end{proof}

\begin{theorem}[Simplex barycentric enclosing]\label{thm:simcov}
Let $\sigma^m = [{\upsilon}^0, {\upsilon}^1, \ldots, {\upsilon}^m]$
be an $m$-simplex in\/ ${\mathbb{R}}^n$ $(n \geqslant m)$ and let
$\kappa^m$ be its barycenter. Then the barycentric circumradius $\beta_{\rm cir}^m$ is unique, it is given as follows:
\begin{equation}\label{eq:3.3}
\beta_{\rm cir}^m = \frac{1}{m + 1} \max_{0 \leqslant i \leqslant m} \Biggl\{m\v j0im  \nr {\upsilon}i{\upsilon}j^2-
    \sum_{\substack{p=0\\p\ne i}}^{m-1}
      \v q{p+1}im  \nr {\upsilon}p{\upsilon}q^2\Biggr\}^{1/2},
\end{equation}
and the spherical surface with center at $\kappa^m$ and radius $\beta_{\rm cir}^m$ encloses $\sigma^m$.
\end{theorem}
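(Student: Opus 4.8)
The plan is to reduce the task of enclosing the whole body $\sigma^m$ to the finite task of enclosing its $m+1$ vertices, and then simply to read off the radius from Lemma~\ref{lem:circ}.

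First I would unwind the definition: $\beta_{\rm cir}^m$ is the smallest radius $r$ for which the ball $B^n_{r,\kappa^m}$ centered at the barycenter satisfies $\sigma^m \subseteq B^n_{r,\kappa^m}$. Because $\sigma^m = {\rm co}\{{\upsilon}^0,\ldots,{\upsilon}^m\}$ and a Euclidean ball is convex, this inclusion holds \emph{if and only if} every vertex lies in the ball, that is, $\|\kappa^m - {\upsilon}^i\| \leqslant r$ for all $i$: if all vertices belong to the convex set $B^n_{r,\kappa^m}$ then so does their convex hull, and conversely each vertex belongs to $\sigma^m$. Hence the admissible radii form exactly the half-line $\bigl[\max_{0\leqslant i\leqslant m}\|\kappa^m-{\upsilon}^i\|,\ \infty\bigr)$, whose infimum is attained. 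This at once yields both the uniqueness of $\beta_{\rm cir}^m$ and the identity
\[
\beta_{\rm cir}^m \;=\; \max_{0\leqslant i\leqslant m}\|\kappa^m-{\upsilon}^i\|.
\]

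Next I would insert the closed form for $\|\kappa^m-{\upsilon}^i\|$ supplied by Lemma~\ref{lem:circ}. Since the factor $1/(m+1)$ is constant and the square root is increasing, the maximum commutes with $(\cdot)^{1/2}$ and with the scalar, producing exactly the stated expression in Eq.~(\ref{eq:3.3}). The enclosing assertion is then immediate: taking $r = \beta_{\rm cir}^m$ we have $\|\kappa^m-{\upsilon}^i\|\leqslant\beta_{\rm cir}^m$ for every $i$, so by the same convexity argument $\sigma^m\subseteq B^n_{\beta_{\rm cir}^m,\kappa^m}$, and the spherical surface $S^{n-1}=\vartheta B^n_{\beta_{\rm cir}^m,\kappa^m}$ encloses $\sigma^m$.

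I expect no real obstacle here; the single point that genuinely needs an argument is the convexity reduction that replaces the continuum condition $\sigma^m\subseteq B^n_{r,\kappa^m}$ by the finitely many scalar inequalities $\|\kappa^m-{\upsilon}^i\|\leqslant r$. Once that is in place, the theorem is a direct combination of Lemma~\ref{lem:circ} with the monotonicity of $t\mapsto t^{1/2}$.
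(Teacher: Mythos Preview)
Your proposal is correct and follows essentially the same route as the paper: the paper's proof is the single sentence ``The proof is obvious from Lemma~\ref{lem:circ},'' and you have simply spelled out the convexity reduction (that enclosing the convex hull is equivalent to enclosing the vertices) which the paper leaves implicit. Nothing is missing or different in substance.
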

\begin{proof}
The proof is obvious from Lemma~\ref{lem:circ}.
\end{proof}

\begin{remark}
In 1901 Jung\footnote{\,Heinrich Wilhelm Ewald Jung (1876 -- 1953), German mathematician.}
\cite{Jung1901}
was first answered to the question of best possible estimate of the
radius of the smallest spherical surface enclosing
a bounded subset $P$ of $\mathbb{R}^n$ of a given diameter,
that is the maximal distance of any two points of $P$.
Particularly, Jung established results for the case of
finite point sets and indicated their extension to infinite sets.
In addition, in 1910 Jung~\cite{Jung1910} proposed necessary conditions on
the smallest circle enclosing a finite point set in a plane
(for a discussion on these issues and for a complete and elegant proof of Jung's theorem, we refer the interested reader to~\cite{BlumenthalW1941}).
\end{remark}

\begin{theorem}[Jung's enclosing theorem (1901)~\cite{Jung1901}]\label{JungTh}
Assume that ${\rm diam}(P)$ is the diameter of a bounded subset
${P}$ of\, $\mathbb{R}^n$ (containing more than a single point).
Then,
(a)~there exists a unique spherical surface of
circumradius $\rho_{\rm cir}^n$ enclosing $P$, and
(b)~$\rho_{\rm cir}^n \leqslant [n/(2n+2)]^{1/2}\, {\rm diam}(P)$.
\end{theorem}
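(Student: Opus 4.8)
The plan is to construct the minimum enclosing ball directly and then extract the bound from a weighted version of the barycentric identity Eq.~(\ref{eq:corrgen2}). For existence I would study the function $f(c) = \sup_{x \in P} \|x - c\|$ for $c \in \mathbb{R}^n$. Since $P$ is bounded, $f$ is finite, $1$-Lipschitz (hence continuous), convex as a supremum of the convex maps $c \mapsto \|x-c\|$, and coercive ($f(c) \ge \|c\| - \|x_0\| \to \infty$ for a fixed $x_0 \in P$); therefore $f$ attains its infimum at some center $c^\ast$, and $\rho_{\rm cir}^n := f(c^\ast)$ is the circumradius with $P \subseteq B^n_{\rho_{\rm cir}^n, c^\ast}$. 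For uniqueness I would invoke the parallelogram identity: if $c_1 \ne c_2$ were two minimizing centers of common radius $\rho_{\rm cir}^n$, then for the midpoint $c = \tfrac12(c_1+c_2)$ and every $x \in P$ one has $\|x-c\|^2 = \tfrac12\|x-c_1\|^2 + \tfrac12\|x-c_2\|^2 - \tfrac14\|c_1-c_2\|^2 \le (\rho_{\rm cir}^n)^2 - \tfrac14\|c_1-c_2\|^2 < (\rho_{\rm cir}^n)^2$, so $c$ would enclose $P$ with a strictly smaller radius, a contradiction. This settles part~(a).

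The decisive structural step for part~(b) is to place the optimal center inside the convex hull of the points it actually touches. Let $T = \{x \in \overline{P} : \|x - c^\ast\| = \rho_{\rm cir}^n\}$ be the (compact) contact set on the bounding sphere. I claim $c^\ast \in {\rm co}(T)$: otherwise a separating hyperplane supplies a direction $d$ with $\langle x - c^\ast, d\rangle > 0$ for all $x \in T$, and shifting the center a small amount along $d$ strictly decreases $\|x - c^\ast\|$ for every $x \in T$; a compactness/slack argument shows this also keeps every point of $P$ that is not near $T$ strictly inside (such points already lie at distance $< \rho_{\rm cir}^n$), so the enclosing radius would drop below $\rho_{\rm cir}^n$, contradicting minimality. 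By Carath\'eodory's theorem I may then write $c^\ast = \sum_{j=0}^{k}\lambda_j\,p_j$ with $p_j \in T$, $\lambda_j \ge 0$, $\sum_j \lambda_j = 1$, and $k \le n$.

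Finally I would extract the quantitative estimate from a weighted form of the identity behind Eq.~(\ref{eq:corrgen2}). Because $\sum_{j}\lambda_j (p_j - c^\ast) = 0$, expanding $\bigl\|\sum_j \lambda_j (p_j - c^\ast)\bigr\|^2 = 0$ and substituting $\langle p_i - c^\ast, p_j - c^\ast\rangle = (\rho_{\rm cir}^n)^2 - \tfrac12\|p_i - p_j\|^2$ together with $\bigl(\sum_j \lambda_j\bigr)^2 = 1$ collapses the double sum to the clean identity $(\rho_{\rm cir}^n)^2 = \sum_{i<j}\lambda_i\lambda_j\,\|p_i - p_j\|^2$; the uniform-weight case $\lambda_j = 1/(k+1)$ is exactly Eq.~(\ref{eq:corrgen2}). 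Bounding each $\|p_i - p_j\| \le {\rm diam}(P)$ and using $\sum_{i<j}\lambda_i\lambda_j = \tfrac12\bigl(1 - \sum_j \lambda_j^2\bigr) \le \tfrac12\bigl(1 - \tfrac{1}{k+1}\bigr) = \tfrac{k}{2(k+1)} \le \tfrac{n}{2(n+1)}$ (the first inequality by $\sum_j\lambda_j^2 \ge 1/(k+1)$, the last because $k \le n$ and $t \mapsto t/(2t+2)$ is increasing) yields $(\rho_{\rm cir}^n)^2 \le \tfrac{n}{2(n+1)}\,{\rm diam}(P)^2$, i.e.\ part~(b) after taking square roots; equality forces uniform weights with all mutual distances equal to ${\rm diam}(P)$, namely the regular $n$-simplex, matching the extremal configuration recorded in Remark~\ref{rem:Fermat}. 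The main obstacle is the structural claim $c^\ast \in {\rm co}(T)$: the separation-and-perturbation argument has to be carried out with enough care that moving the center never pushes a previously interior point of $P$ out past radius $\rho_{\rm cir}^n$, which is precisely where the compactness estimate on the slack of non-contact points is needed.
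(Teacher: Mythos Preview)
Your argument is correct and complete. The paper, however, does not actually prove Jung's theorem: its entire ``proof'' is a list of references (S\"uss 1936, Blumenthal--Wahlin 1941, Eggleston 1958, Guggenheimer 1977) where proofs may be found. So there is nothing substantive to compare against; your write-up supplies content where the paper supplies only citations.

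That said, the route you take --- existence and uniqueness of the minimum enclosing ball via convexity/coercivity of $c\mapsto\sup_{x\in P}\|x-c\|$ and the parallelogram law, then the structural fact $c^\ast\in{\rm co}(T)$ combined with Carath\'eodory's theorem, and finally the weighted variance identity $(\rho_{\rm cir}^n)^2=\sum_{i<j}\lambda_i\lambda_j\|p_i-p_j\|^2$ --- is essentially the classical modern proof, close in spirit to the Blumenthal--Wahlin and Eggleston treatments the paper cites. A nice feature of your version is that it ties the bound back to the paper's own machinery: the identity you derive is precisely the convex-weight generalization of Eq.~(\ref{eq:corrgen2}), with the uniform case $\lambda_j=1/(k+1)$ recovering Theorem~\ref{corr:corrgen}. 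One small point worth making explicit if you write it out in full: the perturbation step for $c^\ast\in{\rm co}(T)$ needs a uniform positive slack $\rho_{\rm cir}^n-\|x-c^\ast\|\geqslant\delta>0$ on the complement of any neighbourhood of $T$ in $\overline{P}$, which follows at once from compactness of $\overline{P}$; you already flag this, and it is routine.
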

\begin{proof}
This theorem has received quite a few proofs due to
S\"{u}ss (1936)~\cite{Suss1936}, Blumenthal and Wahlin (1941)~\cite{BlumenthalW1941},
Eggleston (1958)~\cite[Th.49, p.111]{Eggleston1958} and Guggenheimer (1977)~\cite[Pr.13-6, p.140]{Guggenheimer1977}, among others.
\end{proof}

In 1953 Gale~\cite{Gale1953} has provided the following  equivalent formulation of Jung's theorem.

\begin{theorem}[Gale's enclosing theorem (1953)~\cite{Gale1953}]\label{GaleTh-cir}
The circumscribed $n$-sphere of a regular $n$-simplex of diameter~1 will cover any $n$-dimensional set of diameter~1.
\end{theorem}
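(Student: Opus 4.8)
The plan is to reduce Gale's statement to Jung's Theorem~\ref{JungTh}, by verifying that the circumradius of a regular $n$-simplex of diameter $1$ equals exactly the Jung bound $[n/(2n+2)]^{1/2}$; Gale's theorem then becomes the assertion that this extremal covering sphere is realized by the regular simplex.

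First I would compute the circumradius of the regular $n$-simplex. For a regular simplex the circumcenter coincides with the barycenter $\kappa^n$, so $\rho_{\rm cir}^n = \beta_{\rm cir}^n = \|\kappa^n - {\upsilon}^i\|$ for each vertex. Applying Lemma~\ref{lem:circ} (equivalently Theorem~\ref{thm:simcov}) in the regular case, where every edge has length ${\rm diam}(\sigma^n)$, the first inner sum reduces to $n$ equal terms and the second to $\binom{n}{2}$ equal terms, giving
\[
\rho_{\rm cir}^n = \frac{{\rm diam}(\sigma^n)}{n+1}\left(n^2 - \binom{n}{2}\right)^{1/2} = \left(\frac{n}{2(n+1)}\right)^{1/2}{\rm diam}(\sigma^n),
\]
since $n^2 - \binom{n}{2} = n(n+1)/2$. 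Hence for diameter $1$ the circumradius equals $[n/(2n+2)]^{1/2}$.

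Next I would invoke Jung's Theorem~\ref{JungTh}(b): every bounded set $P\subset\mathbb{R}^n$ of diameter $1$ is contained in some closed ball of radius $r\leqslant [n/(2n+2)]^{1/2}$. Because this upper bound coincides with the circumradius computed above, $P$ is contained in a ball of radius $[n/(2n+2)]^{1/2}$, that is, in a congruent copy of the ball circumscribed about the regular $n$-simplex of diameter $1$; positioning that fixed ball by a rigid motion, its boundary sphere covers $P$, which is precisely Gale's claim.

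The only point requiring care — and the sole content beyond a direct appeal to Jung — is that the circumradius of the regular simplex matches the Jung bound with equality, not merely as an inequality. This tightness is what makes Gale's formulation logically equivalent to Jung's theorem: the regular simplex itself has diameter $1$, is inscribed in its circumscribed sphere, and thus exhibits a diameter-$1$ set whose minimal enclosing sphere has radius exactly $[n/(2n+2)]^{1/2}$, so the covering radius cannot be lowered. I anticipate no genuine obstacle, since the required facts about the regular simplex are already supplied by Lemma~\ref{lem:circ} together with the identification of its circumcenter with its barycenter.
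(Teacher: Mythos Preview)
Your argument is correct. Note, however, that the paper does not supply its own proof of Theorem~\ref{GaleTh-cir}: it is stated without proof as Gale's ``equivalent formulation of Jung's theorem'' and cited to~\cite{Gale1953}. Your proof fills in exactly this equivalence, and the ingredients you use are the ones the paper provides nearby: the circumradius computation is precisely Lemma~\ref{lem:circ-reg}, and the observation that this value coincides with Jung's bound is the content of the remark following Corollary~\ref{cor:reg-Fermat}. So your approach matches what the paper leaves implicit.
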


\begin{lemma}[Regular simplex enclosing]\label{lem:circ-reg}
Let $\sigma ^m$ be a regular $m$-simplex in ${\mathbb{R}}^n$ with edge length
${\rm diam}(\sigma^m)$,
then the barycentric circumradius $\beta_{\rm cir}^m$ is given by:
\begin{equation}\label{eq:equi}
\beta_{\rm cir}^m = [m/(2m+2)]^{1/2}\, {\rm diam}(\sigma^m).
\end{equation}
\end{lemma}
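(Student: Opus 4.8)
The plan is to specialize the barycentric distance formula of Lemma~\ref{lem:circ} to the regular case and then read off $\beta_{\rm cir}^m$ from Theorem~\ref{thm:simcov}. First I would write $d = {\rm diam}(\sigma^m)$ and use regularity: every edge has the same length, so $\nr{\upsilon}i{\upsilon}j^2 = d^2$ for all $i\neq j$. Substituting into the right-hand side of Eq.~(\ref{eq:3.1}), the inner single sum $\v j0im \nr{\upsilon}i{\upsilon}j^2$ consists of $m$ equal terms and therefore equals $m\,d^2$, while the excluded-index double sum $\sum_{\substack{p=0\\p\ne i}}^{m-1}\v q{p+1}im \nr{\upsilon}p{\upsilon}q^2$ ranges exactly over the $\binom{m}{2}$ edges of the opposite face $\sigma^{m}_{\neg i}$ and hence equals $\binom{m}{2}\,d^2$.

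The second step is the algebraic simplification. With these substitutions the quantity under the square root in Eq.~(\ref{eq:3.1}) becomes $m\cdot m\,d^2 - \binom{m}{2}d^2 = \bigl(m^2 - m(m-1)/2\bigr)d^2 = \bigl(m(m+1)/2\bigr)\,d^2$. Dividing by $(m+1)^2$ and taking the square root gives $\|\kappa^m-{\upsilon}^i\|^2 = \tfrac{m}{2(m+1)}\,d^2$, so that $\|\kappa^m-{\upsilon}^i\| = [m/(2m+2)]^{1/2}\,d$, a value independent of the index $i$.

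Finally, because this common value is the same for every vertex, the maximum appearing in the expression for $\beta_{\rm cir}^m$ in Theorem~\ref{thm:simcov} is attained uniformly over $0 \leqslant i \leqslant m$. Consequently $\beta_{\rm cir}^m$ equals the common distance $\|\kappa^m-{\upsilon}^i\|$ just computed, which is precisely Eq.~(\ref{eq:equi}).

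The only point requiring genuine care is the combinatorial bookkeeping in the first step: confirming that the excluded-index double sum counts exactly the $\binom{m}{2}$ edges of $\sigma^{m}_{\neg i}$, together with the collapse $m^2 - \binom{m}{2} = m(m+1)/2$; everything else is routine substitution. An equivalent route would instead start from the regular-simplex median length $\|\mu_i^m\|^2 = \tfrac{m+1}{2m}\,d^2$ given in Eq.~(\ref{eq:correg1}) and combine it with the $1:m$ ratio of Theorem~\ref{SimCommandinoTh} (that is, $\|\kappa^m-{\upsilon}^i\| = \tfrac{m}{m+1}\|\mu_i^m\|$) to reach the same expression; I would prefer the direct substitution into Lemma~\ref{lem:circ}, since it is self-contained and avoids invoking the generalized Commandino theorem.
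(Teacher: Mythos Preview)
Your proposal is correct and follows essentially the same route as the paper: the paper's proof applies Lemma~\ref{lem:circ} to the regular simplex, observes that the $m$ edges through ${\upsilon}^i$ and the $\binom{m}{2}$ edges of $\sigma^{m}_{\neg i}$ all have length ${\rm diam}(\sigma^m)$, and simplifies $m^2-\binom{m}{2}=m(m+1)/2$ to obtain Eq.~(\ref{eq:equi}). Your additional remark that the uniformity in $i$ makes the maximum in Theorem~\ref{thm:simcov} trivial is a minor elaboration the paper leaves implicit.
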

\begin{proof} Since the lengths of the $m$ edges that have the common vertex ${\upsilon}^i$
as well as the lengths of the $\binom{m}{2}$ edges of the $i$-th $(m-1)$-face $\sigma^{m}_{\neg i}$ are equal to ${\rm diam}(\sigma^m)$, by applying Lemma~\ref{lem:circ} we obtain
\begin{align*}
\beta_{\rm cir}^m = &\frac{1}{m+1}\left[m^2 {\rm diam}(\sigma^m)^2 -\binom{m}{2} {\rm diam}(\sigma^m)^2 \right]^{1/2}\\
                  = &\frac{1}{m+1}\left[ \frac{m (m+1)}{2}\right]^{1/2}{\rm diam}(\sigma^m)
                  = \left[\frac{m}{2(m+1)}\right]^{1/2} {\rm diam}(\sigma^m).
\end{align*}
Thus the lemma is proved.
\end{proof}

The following corollary is related to Fermat's problem (\emph{cf}.\ Remark~\ref{rem:Fermat}).

\begin{corollary}\label{cor:reg-Fermat}
Assume that $\sigma^m = [{\upsilon}^0, {\upsilon}^1, \ldots, {\upsilon}^m]$
is an $m$-simplex in\/ ${\mathbb{R}}^n$ $(n \geqslant m)$ with diameter ${\rm diam}(\sigma^m)$ and let
$\kappa^m$ be its barycenter.
Then it holds that
\begin{equation}\label{eq:reg-Fermat}
\sum_{i=0}^{m} \| \kappa^{m} - {\upsilon}^i \| =[{m (m + 1)/2}]^{1/2}\,\, {\rm diam}(\sigma^m).
\end{equation}
\end{corollary}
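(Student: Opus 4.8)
The plan is to reduce to the regular case and then invoke Lemma~\ref{lem:circ-reg}. I would first point out that the identity cannot hold for an \emph{arbitrary} $m$-simplex: for a general simplex the left-hand side $\sum_{i=0}^{m}\|\kappa^m-\upsilon^i\|$ depends on all the edge lengths, not merely on ${\rm diam}(\sigma^m)$. The hypothesis must therefore be read as requiring $\sigma^m$ to be \emph{regular}, so that every edge has length ${\rm diam}(\sigma^m)$; this is exactly the setting announced in Remark~\ref{rem:Fermat}, where the formula is attributed to regular simplices. Under this reading the key structural fact is that in a regular simplex the barycenter is equidistant from all $m+1$ vertices.

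Next I would apply Lemma~\ref{lem:circ} with all edge lengths equal to ${\rm diam}(\sigma^m)$. Since the $m$ edges meeting at $\upsilon^i$ and the $\binom{m}{2}$ edges of the opposite face $\sigma^m_{\neg i}$ all share the common length ${\rm diam}(\sigma^m)$, the expression under the square root in Eq.~(\ref{eq:3.1}) is independent of the index $i$. The evaluation of that expression is precisely the computation already performed in the proof of Lemma~\ref{lem:circ-reg}, which yields, for each $i$,
\[
\|\kappa^m - \upsilon^i\| = \beta_{\rm cir}^m = \Bigl[\frac{m}{2(m+1)}\Bigr]^{1/2}\, {\rm diam}(\sigma^m).
\]

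The remaining step is purely arithmetic: summing this common value over the $m+1$ vertices gives
\[
\sum_{i=0}^{m} \|\kappa^m - \upsilon^i\| = (m+1)\Bigl[\frac{m}{2(m+1)}\Bigr]^{1/2} {\rm diam}(\sigma^m) = \Bigl[\frac{m(m+1)}{2}\Bigr]^{1/2} {\rm diam}(\sigma^m),
\]
which is Eq.~(\ref{eq:reg-Fermat}); here one only uses the simplification $(m+1)\,[m/(2(m+1))]^{1/2} = [(m+1)^2 m/(2(m+1))]^{1/2} = [m(m+1)/2]^{1/2}$.

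I do not expect any genuine obstacle. The single point deserving care is the recognition that regularity is indispensable: it is exactly what forces the $m+1$ distances $\|\kappa^m-\upsilon^i\|$ to be equal, so that the sum collapses to $(m+1)$ times a single closed-form value rather than a sum of distinct radii. An alternative route, bypassing Lemma~\ref{lem:circ-reg}, would start from Eq.~(\ref{eq:corrgen2}): for a regular simplex its right-hand side equals $\frac{1}{m+1}\cdot\frac{m(m+1)}{2}\,{\rm diam}(\sigma^m)^2 = \frac{m}{2}\,{\rm diam}(\sigma^m)^2$, and since all summands on the left are equal one recovers the same per-vertex distance and hence the same total.
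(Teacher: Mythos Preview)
Your argument is correct and is exactly the paper's route: the paper's one-line proof simply cites Eq.~(\ref{eq:equi}) from Lemma~\ref{lem:circ-reg}, i.e.\ the per-vertex distance $\|\kappa^m-\upsilon^i\|=[m/(2m+2)]^{1/2}\,{\rm diam}(\sigma^m)$ in the regular case, and the sum over $m+1$ vertices gives the result. Your observation that the regularity hypothesis is missing from the corollary's statement is well taken---it is indeed required, as confirmed by Remark~\ref{rem:Fermat} and by the fact that the cited Eq.~(\ref{eq:equi}) is proved only for regular simplices.
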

\begin{proof}
The proof follows from Eq.~(\ref{eq:equi}).
\end{proof}

\begin{remark}
In the case of regular simplices the barycentric circumradius and Jung's estimate are identical.
\end{remark}

\begin{remark}\label{rem:bar}
The barycentric circumradius of an $m$-simplex $\sigma^m$ in $\mathbb{R}^n$ with
diameter ${\rm diam}(\sigma^m)$ is not always greater than the
barycentric circumradius of any
regular $m$-simplex $\tau^m$ in $\mathbb{R}^n$ with the same diameter.
Consider, for example,
a regular $2$-simplex $\tau^2 = [{\upsilon}^0, {\upsilon}^1, {\upsilon}^2]$ with diameter 1
and let $\kappa^2$ be its barycenter.
By applying Lemma~\ref{lem:circ} we obtain $\|\kappa^2 -{\upsilon}^i\|= 3^{-1/2}$ for $i=0,1,2.$
Thus the barycentric circumradius of $\tau^2$ is $3^{-1/2}$.
Consider now the $2$-simplex $\sigma^2 = [\kappa^2, {\upsilon}^1, {\upsilon}^2]$.
The lengths of the edges of $\sigma^2$ are $3^{-1/2}$, $3^{-1/2}$ and 1. Therefore the diameter of $\sigma^2$ is 1.
By applying Theorem~\ref{thm:simcov} we find that the
barycentric circumradius of $\sigma^2$ is equal to
$[7/27]^{1/2}$, which is smaller than the estimation of the upper bound $3^{-1/2}$ for
the circumradius $\rho_{\rm cir}^2$ that is given by
the Jung enclosing Theorem~\ref{JungTh}.
\end{remark}

Based on the above we provide the following result.

\begin{theorem}[Simplex enclosing]\label{thm:simcovII}
Assume that $\sigma^m = [{\upsilon}^0, {\upsilon}^1, \ldots, {\upsilon}^m]$
is an $m$-simplex in\/ ${\mathbb{R}}^n$ $(n \geqslant m)$ with
diameter ${\rm diam}(\sigma^m)$ and let $\beta_{\rm cir}^m$ be its barycentric circumradius.
Then, there exists a unique spherical surface of
circumradius $\rho_{\rm cir}^m$ enclosing $\sigma^m$, and
$\rho_{\rm cir}^m \leqslant \min \bigl\{\beta_{\rm cir}^m,\, [m/(2m+2)]^{1/2}\, {\rm diam}(\sigma^m) \bigr\}$.
\end{theorem}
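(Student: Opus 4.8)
The plan is to combine the two enclosing radii already produced in this section with the existence-and-uniqueness half of Jung's theorem, while exploiting the fact that an $m$-simplex is genuinely $m$-dimensional.

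First I would dispose of existence and uniqueness. The simplex $\sigma^m$ is a compact subset of $\mathbb{R}^n$ of diameter ${\rm diam}(\sigma^m)$, so Jung's enclosing Theorem~\ref{JungTh}(a), applied to $P=\sigma^m$, guarantees a unique spherical surface of least radius $\rho_{\rm cir}^m$ enclosing $\sigma^m$; this is precisely the circumradius and circumcenter in the sense of the definition given earlier.

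Next, the upper bound splits into two independent estimates, each producing a concrete enclosing sphere whose radius therefore dominates the minimal radius $\rho_{\rm cir}^m$. For the first, Theorem~\ref{thm:simcov} already exhibits the spherical surface centered at the barycenter $\kappa^m$ with radius $\beta_{\rm cir}^m$ enclosing $\sigma^m$; since $\rho_{\rm cir}^m$ is the smallest radius among all enclosing spheres, $\rho_{\rm cir}^m \leqslant \beta_{\rm cir}^m$. For the second I would invoke Gale's enclosing Theorem~\ref{GaleTh-cir}: because $\sigma^m$ is an $m$-dimensional set of diameter ${\rm diam}(\sigma^m)$, after rescaling to diameter $1$ the circumscribed sphere of a regular $m$-simplex of diameter $1$ covers it, and by Lemma~\ref{lem:circ-reg} that sphere has radius $[m/(2m+2)]^{1/2}$. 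Scaling back yields an enclosing sphere of radius $[m/(2m+2)]^{1/2}\,{\rm diam}(\sigma^m)$, whence $\rho_{\rm cir}^m \leqslant [m/(2m+2)]^{1/2}\,{\rm diam}(\sigma^m)$. Taking the minimum of the two bounds completes the argument.

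The step that needs genuine care — the main obstacle — is securing the second bound with the factor $[m/(2m+2)]^{1/2}$ rather than the weaker $[n/(2n+2)]^{1/2}$ that a naive application of Jung's Theorem~\ref{JungTh}(b) to $\sigma^m\subset\mathbb{R}^n$ would produce; since $[n/(2n+2)]^{1/2}$ is strictly increasing in the dimension, the ambient estimate is genuinely too large. The resolution is to use the $m$-dimensionality of $\sigma^m$ directly: the center of its minimal enclosing ball must lie in the $m$-dimensional affine hull of the vertices, for otherwise its orthogonal projection onto that hull would give a strictly smaller enclosing ball. The problem thus reduces to the circumscribed sphere inside an $m$-flat isometric to $\mathbb{R}^m$, and Gale's Theorem~\ref{GaleTh-cir}, being phrased in terms of the dimension of the set rather than of the ambient space, packages exactly this reduction and delivers the desired $m$-dependence.
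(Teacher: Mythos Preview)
Your proposal is correct and follows essentially the same route as the paper, which simply writes ``The proof is obvious from Theorem~\ref{thm:simcov} and Theorem~\ref{JungTh}.'' The one genuine addition you make is to confront the $m$-versus-$n$ issue in the Jung bound: the paper's one-line proof leaves implicit the reduction to the $m$-dimensional affine hull of the vertices, whereas you spell it out (via the orthogonal-projection argument for the circumcenter) and package it through Gale's Theorem~\ref{GaleTh-cir} together with Lemma~\ref{lem:circ-reg} rather than citing Jung's theorem directly. This extra care is warranted, since Theorem~\ref{JungTh} as stated in the paper is formulated for subsets of $\mathbb{R}^n$ and literally yields only $[n/(2n+2)]^{1/2}$; your treatment makes the passage to $[m/(2m+2)]^{1/2}$ honest.
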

\begin{proof}
The proof is obvious from Theorem~\ref{thm:simcov} and Theorem~\ref{JungTh}.
\end{proof}

\begin{remark}
The computation of the barycentric circumradius does not require
any additional computational burden since it can be easily obtained
during the computation of the longest edge (diameter) of the simplex.
\end{remark}

Next, the important theorems due to
Carath\'{e}odory\footnote{\,Constantin Carath\'{e}odory (1873 -- 1950), Greek mathematician.},
Helly\footnote{\,Eduard Helly (1884 -- 1943), Austrian mathematician.},
Radon\footnote{\,Johann Karl August Radon (1887 -- 1956), Austrian mathematician.},
and
Tverberg\footnote{\,Helge Arnulf Tverberg (1935 -- 2020), Norwegian mathematician.}
are presented.

\begin{theorem}[Carath\'{e}odory's theorem (1907)~\cite{Caratheodory1907}]\label{CaratheodoryTh}
Any point in the convex hull of a finite point set in $\mathbb{R}^n$ is a convex combination
of some at most $n + 1$ of these points.
\end{theorem}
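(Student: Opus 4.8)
The plan is to begin with an arbitrary convex representation of the given point and then repeatedly discard points until at most $n+1$ survive, each elimination step resting on the fact that once more than $n+1$ points are involved they must be affinely dependent. First I would fix a finite set $\{p^0, p^1, \ldots, p^N\} \subset \mathbb{R}^n$ and a point $x$ in its convex hull, so that $x = \sum_{k=0}^{N} \lambda_k\, p^k$ with every $\lambda_k \geqslant 0$ and $\sum_{k=0}^{N} \lambda_k = 1$. If $N \leqslant n$ there is nothing to prove, so the interesting case is $N \geqslant n+1$, that is, at least $n+2$ points carrying positive weight.

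The key observation is that the $N$ vectors $p^1 - p^0, \ldots, p^N - p^0$ lie in $\mathbb{R}^n$ and number $N \geqslant n+1 > n$, hence are linearly dependent. Equivalently, there exist scalars $\mu_0, \ldots, \mu_N$, not all zero, satisfying $\sum_{k=0}^{N} \mu_k\, p^k = 0$ together with $\sum_{k=0}^{N} \mu_k = 0$; this is an \emph{affine} dependence relation. Since the $\mu_k$ sum to zero and are not all zero, at least one of them is strictly positive, which guarantees that the index set $\{k : \mu_k > 0\}$ is nonempty.

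Next I would introduce, for a real parameter $t$, the perturbed coefficients $\lambda_k - t\mu_k$. Because $\sum_k \mu_k = 0$ these still sum to $1$, and because $\sum_k \mu_k\, p^k = 0$ they still represent $x$, since $\sum_k (\lambda_k - t\mu_k)\, p^k = x - t\cdot 0 = x$. I would then set $t = \min\{\lambda_k/\mu_k : \mu_k > 0\}$ and let $j$ be an index attaining this minimum. A short sign analysis shows that $\lambda_j - t\mu_j = 0$ while all remaining coefficients stay nonnegative: for $\mu_k > 0$ this follows from the minimality defining $t$, and for $\mu_k \leqslant 0$ it is immediate, as then $-t\mu_k \geqslant 0$ (one checks $t \geqslant 0$ since each $\lambda_k/\mu_k \geqslant 0$). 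This produces a convex representation of $x$ in which $p^j$ has dropped out, so the number of points with positive weight has decreased by at least one.

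Iterating this elimination strictly reduces the count of active points each time, and the process must halt once at most $n+1$ points remain, precisely because only then is the affine-dependence argument no longer available. The step I expect to be the crux is verifying that the single choice of $t$ simultaneously annihilates one coefficient and preserves nonnegativity of all the others; this is where splitting on the sign of $\mu_k$ does the real work, and where one must confirm that $\{k : \mu_k > 0\}$ is nonempty so that $t$ is well defined—a point already secured by the fact that the $\mu_k$ sum to zero without all vanishing.
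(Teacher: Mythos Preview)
The paper does not supply a proof of Carath\'{e}odory's theorem; it is merely stated with a citation to the 1907 original and then used as background for the enclosing-ball discussion. So there is nothing in the paper to compare your argument against.

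Your proposal is the standard affine-dependence reduction and is correct. One small tidy-up: you pass from ``$N \geqslant n+1$'' to ``at least $n+2$ points carrying positive weight'' without first discarding any $p^k$ with $\lambda_k = 0$. If you do not prune those zero-weight points, your chosen $t = \min\{\lambda_k/\mu_k : \mu_k > 0\}$ could be $0$, in which case the coefficients do not change and your claim that ``the number of points with positive weight has decreased'' is not literally true. The fix is immediate: either assume at the outset, without loss of generality, that every $\lambda_k > 0$, or track the number of points in the representation (rather than those with strictly positive weight) and explicitly drop $p^j$ even when $t=0$. With that adjustment the iteration terminates exactly as you describe.
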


\begin{theorem}[Helly's theorem (1913)~\cite{Helly1923}]\label{HellyTh}
Let $C_1, C_2, \ldots, C_k$ be a finite family of convex subsets of\/ $\mathbb{R}^n$, with $k \geqslant n + 1$.
If the intersection of every $n + 1$ of these sets is nonempty, then the whole family has a nonempty intersection.
\end{theorem}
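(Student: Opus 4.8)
The plan is to prove Helly's theorem by induction on the number $k$ of sets, with Radon's partition theorem (the companion result presented alongside) as the one essential tool. The base case $k = n+1$ is vacuous: for this value of $k$ the hypothesis that every $n+1$ of the sets meet is literally the assertion that all of them meet.

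For the inductive step I would assume the statement for every family of $k$ convex sets in $\mathbb{R}^n$ with $k \geqslant n+1$, and establish it for $k+1$ sets $C_1, \ldots, C_{k+1}$. First I would choose, for each index $j$, a point
\[
x_j \in \bigcap_{\substack{i=1\\ i \neq j}}^{k+1} C_i,
\]
which exists by the induction hypothesis applied to the $k$-member subfamily obtained by deleting $C_j$: every $n+1$ of those $k$ sets still meet, being a subcollection of the original family. This produces $k+1 \geqslant n+2$ points $x_1, \ldots, x_{k+1}$ in $\mathbb{R}^n$.

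Being at least $n+2$ in number, these points are affinely dependent, so Radon's theorem partitions the index set $\{1, \ldots, k+1\}$ into two disjoint blocks $I$ and $J$ with ${\rm conv}\{x_i : i \in I\} \cap {\rm conv}\{x_j : j \in J\} \neq \varnothing$; I fix a point $p$ in this intersection. The heart of the argument is to verify that $p \in C_\ell$ for every $\ell$. Each index $\ell$ lies in exactly one block, say $\ell \in J$; then every $i \in I$ satisfies $i \neq \ell$, so $x_i \in C_\ell$ by the defining property of $x_i$, whence ${\rm conv}\{x_i : i \in I\} \subseteq C_\ell$ by convexity and in particular $p \in C_\ell$. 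The symmetric argument handles $\ell \in I$, so $p$ lies in the intersection of all $k+1$ sets.

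The main obstacle is conceptual rather than computational: one must recognize that the correct points to feed into Radon's theorem are the witnesses $x_j$, each chosen to lie in all the sets \emph{except} $C_j$, so that for any target index $\ell$ the witnesses in the block opposite to $\ell$ all lie in $C_\ell$. Once this bookkeeping is in place, convexity closes the argument immediately. I note that the induction genuinely needs Radon's theorem as its engine; a route through Carath\'{e}odory's Theorem~\ref{CaratheodoryTh} alone is possible but markedly less transparent.
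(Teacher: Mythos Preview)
Your argument is the classical induction-on-$k$ proof via Radon's theorem, and it is correct as written: the base case is vacuous, the inductive choice of witnesses $x_j \in \bigcap_{i \neq j} C_i$ is justified by the induction hypothesis, and the Radon partition delivers a point $p$ that the convexity bookkeeping places in every $C_\ell$.

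There is, however, nothing in the paper to compare it against. The paper merely \emph{states} Helly's theorem (alongside the theorems of Carath\'{e}odory, Radon, and Tverberg) as a classical result with a citation to~\cite{Helly1923}, and offers no proof of its own; these theorems are quoted only to motivate the Blumenthal--Wahlin reduction (Theorem~\ref{lem:Car}) used in the enclosing application. So your proposal is not so much a different route as a route where the paper has none.
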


\begin{theorem}[Radon's theorem (1921)~\cite{Radon1921}]\label{RadonTh}
Any set of $n + 2$ points in $\mathbb{R}^n$ can be partitioned into two sets whose convex hulls intersect.
\end{theorem}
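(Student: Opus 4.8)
The plan is to reduce the statement to a short linear-algebra counting argument that exploits the fact that $n+2$ points in $\mathbb{R}^n$ must satisfy a nontrivial \emph{affine}\/ dependence. Label the given points $x^0, x^1, \ldots, x^{n+1}$. First I would set up the homogeneous linear system in the unknowns $\lambda_0, \lambda_1, \ldots, \lambda_{n+1}$ consisting of the $n$ coordinate equations $\sum_{k=0}^{n+1} \lambda_k\, x^k = 0$ together with the single scalar equation $\sum_{k=0}^{n+1} \lambda_k = 0$. This is a system of $n+1$ homogeneous equations in $n+2$ unknowns, so its solution space has positive dimension and there exists a nontrivial solution $\lambda = (\lambda_0, \lambda_1, \ldots, \lambda_{n+1}) \neq 0$.

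Next I would use the signs of the coefficients $\lambda_k$ to produce the partition. Set $I = \{ k : \lambda_k > 0 \}$ and $J = \{ k : \lambda_k \leqslant 0 \}$, which clearly partition the index set $\{0, 1, \ldots, n+1\}$. Since $\sum_k \lambda_k = 0$ while $\lambda \neq 0$, at least one coefficient is strictly positive and at least one is strictly negative, so both $I$ and $J$ are nonempty. Let $S = \sum_{k \in I} \lambda_k = -\sum_{k \in J} \lambda_k$, which is strictly positive.

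Then I would exhibit the common point. Dividing the dependence relation $\sum_{k \in I} \lambda_k\, x^k = \sum_{k \in J} (-\lambda_k)\, x^k$ by $S$ yields
\[
p = \sum_{k \in I} \frac{\lambda_k}{S}\, x^k = \sum_{k \in J} \frac{-\lambda_k}{S}\, x^k .
\]
In each of the two expressions the coefficients are nonnegative and sum to $1$, so $p$ lies in the convex hull of $\{ x^k : k \in I \}$ and simultaneously in the convex hull of $\{ x^k : k \in J \}$. Hence the two convex hulls of the partition intersect at $p$, which is exactly what is to be shown.

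The argument involves no real obstacle beyond the initial observation that the $n$ coordinate equations together with the normalization equation $\sum_k \lambda_k = 0$ form an underdetermined homogeneous system and therefore admit a nontrivial solution. The only point requiring a little care is verifying that both sign classes are genuinely nonempty, so that the partition is into two nonempty sets; this follows immediately from combining $\sum_k \lambda_k = 0$ with $\lambda \neq 0$. Everything else is the routine normalization of a nontrivial affine dependence.
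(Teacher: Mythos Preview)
Your argument is correct and is in fact the classical proof of Radon's theorem via affine dependence. The paper, however, does not supply its own proof of this statement at all: Theorem~\ref{RadonTh} is merely stated (with a citation to Radon's 1921 paper) as one of several foundational results from convexity theory, alongside the theorems of Carath\'{e}odory, Helly, and Tverberg, and is then used only indirectly through the Blumenthal--Wahlin reduction. So there is nothing in the paper to compare your approach against; you have filled in a proof where the paper simply quotes the result.
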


\begin{theorem}[Tverberg's theorem (1966)~\cite{Tverberg1966}]\label{TverbergTh}
Every set with at least $(p - 1)(n + 1) + 1$ points in\/ $\mathbb{R}^n$ can be
partitioned into $p$ subsets whose convex hulls all have at least one point in common.
\end{theorem}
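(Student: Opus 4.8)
The plan is to deduce the theorem from the \emph{colorful} version of Carath\'{e}odory's theorem via Sarkaria's tensor embedding; the case $p=2$ is exactly Radon's Theorem~\ref{RadonTh}, which both motivates the statement and serves as a sanity check on the construction. Write $N=(p-1)(n+1)+1$ for the number of points $x_1,\ldots,x_N\in\mathbb{R}^n$, set $d=(p-1)(n+1)$, and record the crucial numerical coincidence $N=d+1$.

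First I would establish, as an auxiliary lemma, a colorful Carath\'{e}odory statement: if $M_1,\ldots,M_{d+1}\subseteq\mathbb{R}^d$ are sets each containing the origin in its convex hull, then one can choose a transversal $y_i\in M_i$ whose convex hull still contains the origin. The natural proof is a nearest-point minimization: among all transversals pick one minimizing the distance from $0$ to $\mathrm{conv}\{y_1,\ldots,y_{d+1}\}$, let $z$ be the nearest point, and assume for contradiction $z\neq 0$. By the ordinary Carath\'{e}odory Theorem~\ref{CaratheodoryTh}, $z$ lies on a proper face and is a convex combination of at most $d$ of the $y_i$, so some color $k$ is unused; since $0\in\mathrm{conv}(M_k)$ while every $y_i$ satisfies $\langle y_i,z\rangle\ge\|z\|^2>0$, the class $M_k$ must contain a point on the non-positive side of the hyperplane through $z$ orthogonal to $z$, and substituting it for $y_k$ strictly decreases the distance to $0$, contradicting minimality. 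Hence $z=0$.

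Next comes the tensor trick. Lift each point to $\bar{x}_i=(1,x_i)\in\mathbb{R}^{n+1}$ and fix vectors $w_1,\ldots,w_p\in\mathbb{R}^{p-1}$ that are the vertices of a simplex centered at the origin, so that $\sum_{j=1}^p w_j=0$ while any $p-1$ of them are linearly independent; equivalently $W=[w_1,\ldots,w_p]$ has rank $p-1$ with $\ker W=\mathrm{span}(\mathbf{1})$. For each $i$ form the color class $C_i=\{\bar{x}_i\otimes w_j:1\le j\le p\}\subseteq\mathbb{R}^{(n+1)(p-1)}=\mathbb{R}^d$. Because $\tfrac1p\sum_j \bar{x}_i\otimes w_j=\bar{x}_i\otimes\bigl(\tfrac1p\sum_j w_j\bigr)=0$, each $C_i$ contains $0$ in its convex hull; there are exactly $d+1$ such classes, so the colorful lemma produces a choice $j(i)$ and weights $\lambda_i\ge 0$ with $\sum_i\lambda_i=1$ and $\sum_i\lambda_i\,\bar{x}_i\otimes w_{j(i)}=0$.

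Finally I would unravel this relation to obtain the Tverberg partition $A_j=\{x_i:j(i)=j\}$. Grouping by color rewrites the vanishing tensor as $\sum_{j=1}^p u_j\otimes w_j=0$ with $u_j=\sum_{i\in A_j}\lambda_i\bar{x}_i\in\mathbb{R}^{n+1}$; reading this as the matrix identity $UW^{\mathsf{T}}=0$ forces each row of $U$, viewed as a vector in $\mathbb{R}^p$, to lie in $\ker W=\mathrm{span}(\mathbf{1})$, so all the $u_j$ coincide. Equating first coordinates gives $\sum_{i\in A_j}\lambda_i=1/p>0$ for every $j$ (in particular each part is nonempty), and equating the remaining coordinates shows that the common point $q$ obtained by normalizing lies in $\mathrm{conv}(A_j)$ for all $j$ simultaneously. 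I expect the genuine obstacle to be the colorful Carath\'{e}odory lemma together with the discovery of the embedding itself: once these are in hand the concluding computation is routine linear algebra, but recognizing that the tensor lift converts a Tverberg configuration into a single colorful Carath\'{e}odory instance of exactly the right dimension $d+1=N$ is the essential and non-obvious step.
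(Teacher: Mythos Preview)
The paper does not supply its own proof of Tverberg's theorem: the statement is quoted as a classical result with a reference to \cite{Tverberg1966}, and the surrounding remark only records that Tverberg gave the first proof in 1966 and a simpler one in 1981 \cite{Tverberg1981}. So there is nothing in the paper to compare your argument against at the level of technique.

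That said, your proposal is a correct outline of the Sarkaria tensor proof (in the B\'ar\'any--Onn formulation). The colorful Carath\'eodory lemma is argued correctly via the nearest-point contradiction; note only that attainment of the minimum over transversals uses compactness, which is immediate here because each color class $C_i$ is finite. The unraveling step is fine: the identity $\sum_j u_j\otimes w_j=0$ with $\ker W=\mathrm{span}(\mathbf{1})$ indeed forces $u_1=\cdots=u_p$, and the first coordinate then gives $\sum_{i\in A_j}\lambda_i=1/p$ for every $j$, so all parts are nonempty and share the normalized point $q$. This is genuinely different from Tverberg's original 1966 argument (an intricate induction) and from his 1981 proof; your route is shorter and more conceptual, at the cost of importing the colorful Carath\'eodory lemma as an auxiliary result.
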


\begin{remark}
In 1970
Rockafellar\footnote{\,Ralph Tyrrell Rockafellar (b.\ 1935), American mathematician.}
has pointed out that
\emph{``Carath\'{e}odory's theorem is the fundamental dimensionality result in
convexity theory, and it is the source of many other results in which
dimensionality is prominent''}~\cite{Rockafellar1970}.
In general, the convex hull of a subset $P$ of $\mathbb{R}^n$ can be obtained by carrying out all convex combinations of elements of $P$.
On the other hand, due to Carath\'{e}odory's theorem
it is not necessary to perform combinations involving
more than $n + 1$ elements at a time.
\end{remark}

\begin{remark}
Although Theorem \ref{HellyTh} was proposed by Helly in 1913, yet it was published in 1923~\cite{Helly1923}.
Radon's theorem gives the partition of finite point sets
so that the convex hulls of the parts intersect.
This partition is a special case of
the  \emph{Tverberg's partition}.
Tverberg gave the first proof of Theorem~\ref{TverbergTh} in 1966,
while in 1981 he gave a simpler proof~\cite{Tverberg1981}.
Obviously, for $p=2$ we obtain Radon's theorem,
while the case $n=2$ was proved by
Birch\footnote{\,Bryan John Birch (b. 1931), English mathematician.}
in 1959~\cite{Birch1959}.
\end{remark}

\begin{remark}
Theorems~\ref{CaratheodoryTh}, \ref{HellyTh} and \ref{TverbergTh}
are equivalent in the sense that each one can be deduced from another~\cite{DeLoeraGMM2019}.
\end{remark}

Based on the above Blumenthal and Wahlin in 1941~\cite{BlumenthalW1941}
have provided the following result that permits the reduction of the enclosing problem to a finite one concerning $n+1$ points.

\begin{theorem}[Blumenthal-Wahlin theorem (1941)~\cite{BlumenthalW1941}]\label{lem:Car}
If each set of\/ $n+1$ points of a subset $P$ of\/ $\mathbb{R}^n$ is enclosable by a spherical surface of a given radius,
then $P$ is itself enclosable by this spherical surface.
\end{theorem}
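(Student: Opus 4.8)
The plan is to derive this as a dual formulation of Helly's Theorem~\ref{HellyTh}, supplemented by a compactness argument that carries the finite version over to an arbitrary set $P$. The guiding reformulation is that a spherical surface $\vartheta {B}^n_{r,c}$ of radius $r$ encloses a point $x$ exactly when $\|c - x\| \leqslant r$, that is, when the center $c$ lies in the closed ball ${B}^n_{r,x}$. Consequently, $P$ is enclosable by a spherical surface of radius $r$ if and only if the balls centered at the points of $P$ share a common center, that is $\bigcap_{x \in P} {B}^n_{r,x} \neq \emptyset$; and the hypothesis that every $n+1$ points of $P$ are enclosable says precisely that every $n+1$ members of the family $\mathcal{F} = \{ {B}^n_{r,x} : x \in P \}$ have a common point.

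I may assume that $P$ has at least $n+1$ points, the remaining case being immediate from the hypothesis applied to $P$ itself. First I would verify the finite intersection property for $\mathcal{F}$. For a subfamily indexed by more than $n+1$ points this is exactly Helly's Theorem~\ref{HellyTh}: the balls are convex and, by hypothesis, every $n+1$ of them already meet, so the whole finite subfamily meets. For a subfamily indexed by at most $n+1$ points, I would enlarge the selection to exactly $n+1$ points of $P$; since adjoining balls only shrinks the corresponding intersection, nonemptiness of the $(n+1)$-fold intersection (the hypothesis) forces nonemptiness of the smaller one.

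The decisive step is to promote this finite intersection property to nonemptiness of the full intersection $\bigcap_{x \in P} {B}^n_{r,x}$. Here I would use that each closed ball ${B}^n_{r,x}$ is a closed and bounded, hence compact, subset of $\mathbb{R}^n$, and that a family of compact sets with the finite intersection property has nonempty total intersection. Choosing any point $c$ in this intersection gives $\|c - x\| \leqslant r$ for every $x \in P$, so $P \subseteq {B}^n_{r,c}$ and the spherical surface $\vartheta {B}^n_{r,c}$ encloses $P$, as required.

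The main obstacle is exactly this last passage: Theorem~\ref{HellyTh} is stated only for finite families, whereas $P$ may be infinite. The feature of Euclidean space that bridges the gap is the compactness of closed balls; were the enclosing sets merely closed and unbounded, the finite intersection property alone would not suffice, so it is essential to exploit that closed balls in $\mathbb{R}^n$ are compact.
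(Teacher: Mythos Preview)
Your argument is correct and matches the approach the paper implicitly intends: the paper does not supply its own proof but states the theorem immediately after Carath\'{e}odory's, Helly's, Radon's, and Tverberg's theorems with the phrase ``Based on the above'', pointing to precisely the Helly-type reduction you carry out. Your reformulation via the duality $x\in B^n_{r,c}\Longleftrightarrow c\in B^n_{r,x}$ and the subsequent compactness step to pass from finite to arbitrary $P$ are the standard ingredients of the Blumenthal--Wahlin proof, so nothing is missing.
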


\begin{remark}
The barycentric circumradius $\beta_{\rm cir}^n=\beta_{\rm cir}^n(P)$ of a subset $P$ of $\mathbb{R}^n$ is the supremum of the barycentric circumradii of simplices
with vertices in $P$.
In the following theorem, an application of the above theorems has shown that a bounded set $P$ can be covered by a spherical surface
of circumradius~$\beta_{\rm cir}^n(P)$,
which in many cases gives a better result than Jung's enclosing theorem (\emph{cf.} Remark~\ref{rem:bar}).
\end{remark}

\begin{theorem}[Variant of Jung's enclosing theorem (1988)~\cite{Vrahatis1988}]\label{VrahatisTheoremScov}
Assume that ${\rm diam}(P)$ is the diameter of a bounded subset
${P}$ of\, $\mathbb{R}^n$ (containing more than a single point)
and let $\beta_{\rm cir}^n=\beta_{\rm cir}^n(P)$ be its barycentric circumradius.
Then,
(a)~there exists a unique spherical surface of
circumradius $\rho_{\rm cir}^n$ enclosing $P$, and
(b)~$\rho_{\rm cir}^n \leqslant \min \bigl\{\beta_{\rm cir}^n(P),\, [n/(2n+2)]^{1/2}\, {\rm diam}(P) \bigr\}$.
\end{theorem}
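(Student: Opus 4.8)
The plan is to separate the parts that follow immediately from the classical enclosing results from the one genuinely new inequality, $\rho_{\rm cir}^n\leqslant\beta_{\rm cir}^n(P)$, on which all the work is concentrated.

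Assertion~(a), together with the estimate $\rho_{\rm cir}^n\leqslant[n/(2n+2)]^{1/2}\,{\rm diam}(P)$, is precisely what parts~(a) and~(b) of Jung's enclosing Theorem~\ref{JungTh} assert, so both may be quoted verbatim. The only remaining task is to show that the minimal enclosing radius does not exceed the barycentric circumradius of $P$. Setting $r=\beta_{\rm cir}^n(P)$, I would establish this through the Blumenthal--Wahlin Theorem~\ref{lem:Car}: it suffices to prove that every set $S$ of $n+1$ points of $P$ is enclosable by a spherical surface of radius $r$, since that theorem then produces such a surface for all of $P$, and $\rho_{\rm cir}^n\leqslant r$ follows from the minimality of $\rho_{\rm cir}^n$.

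Fix $n+1$ points of $P$. When they are affinely independent they are the vertices of an $n$-simplex $\sigma$ with vertices in $P$, and the Simplex barycentric enclosing Theorem~\ref{thm:simcov} shows that the barycentric sphere of $\sigma$ (center $\kappa^n$, radius $\beta_{\rm cir}(\sigma)$) encloses $\sigma$ and hence these points; as $\beta_{\rm cir}(\sigma)\leqslant\beta_{\rm cir}^n(P)=r$ by the defining supremum, the concentric sphere of radius $r$ encloses $S$. The step that requires real care --- and the one I expect to be the main obstacle --- is the affinely \emph{degenerate} configuration, in which the $n+1$ points fail to span a single $n$-simplex, so that Theorem~\ref{thm:simcov} cannot be invoked on all of them simultaneously, and an arbitrary sub-simplex they span need not have a barycentric sphere containing the remaining points.

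To treat every configuration uniformly I would route the argument through the minimal enclosing ball of $S$, with center $c$ and radius $\rho=\rho_{\rm cir}(S)$. A standard property of minimal enclosing balls --- provable by the convexity reasoning underlying the Carath\'{e}odory and Helly Theorems~\ref{CaratheodoryTh} and~\ref{HellyTh} --- is that $c$ is a convex combination $c=\sum_i\lambda_i v^i$ ($\lambda_i\geqslant0$, $\sum_i\lambda_i=1$) of an affinely independent subset $V=\{v^0,\dots,v^k\}\subseteq S$ of boundary points, each at distance $\rho$ from $c$; thus $c$ is the circumcenter of the simplex $\tau=[v^0,\dots,v^k]$ and lies in $\tau$. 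Writing $\kappa$ for the barycenter of $\tau$ and expanding about $c$ gives $\|\kappa-v^i\|^2=\rho^2+\|\kappa-c\|^2-2(\kappa-c)\cdot(v^i-c)$; were every vertex to satisfy $\|\kappa-v^i\|<\rho$, then $(\kappa-c)\cdot(v^i-c)>\frac{1}{2}\|\kappa-c\|^2\geqslant0$ for all $i$, which is impossible since $\sum_i\lambda_i(v^i-c)=0$ forces $\sum_i\lambda_i(\kappa-c)\cdot(v^i-c)=0$. Hence $\beta_{\rm cir}(\tau)=\max_i\|\kappa-v^i\|\geqslant\rho$, so $\rho_{\rm cir}(S)\leqslant\beta_{\rm cir}(\tau)\leqslant\beta_{\rm cir}^n(P)=r$, and $S$ is enclosable by a sphere of radius $r$. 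With the hypothesis of Theorem~\ref{lem:Car} thereby verified, $P$ is enclosable by a spherical surface of radius $r=\beta_{\rm cir}^n(P)$, giving $\rho_{\rm cir}^n\leqslant\beta_{\rm cir}^n(P)$; combined with Jung's estimate this yields assertion~(b).
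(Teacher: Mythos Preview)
The paper does not actually supply a proof of this theorem: it is stated with a citation to \cite{Vrahatis1988}, and the only indication of the argument is the preceding Remark, which says that ``an application of the above theorems'' (Carath\'{e}odory, Helly, Blumenthal--Wahlin) shows that $P$ can be covered by a sphere of radius $\beta_{\rm cir}^n(P)$. Your proposal follows exactly this outline --- Jung for part~(a) and for the second term of the minimum, and Blumenthal--Wahlin (Theorem~\ref{lem:Car}) to reduce the inequality $\rho_{\rm cir}^n\leqslant\beta_{\rm cir}^n(P)$ to $(n+1)$-point subsets --- so in spirit your approach coincides with the one the paper gestures at.

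Where you add genuine content is in the degenerate case, which the paper's remark does not address. Your argument there is correct and rather elegant: passing to the minimum enclosing ball of the $(n+1)$-point set $S$, using Carath\'{e}odory to write its center $c$ as a convex combination of an affinely independent set $V\subseteq S$ of contact points, and then the short inner-product computation showing that the barycenter $\kappa$ of the simplex $\tau$ on $V$ cannot be strictly closer than $\rho$ to every vertex (the case $\kappa=c$ being trivial). This yields $\rho_{\rm cir}(S)\leqslant\beta_{\rm cir}(\tau)\leqslant\beta_{\rm cir}^n(P)$, and since the definition of $\beta_{\rm cir}^n(P)$ in the preceding Remark ranges over simplices of all dimensions with vertices in $P$, the lower-dimensional $\tau$ is admissible. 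Your proof is thus a legitimate completion of the argument the paper only sketches.
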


\subsubsection{Jung's enclosing theorem and the minimum enclosing ball problem}

The enclosing (covering) problem is related to the well known and widely applied
\emph{minimum enclosing ball}\/ (MEB) problem,
which refers to the \emph{determination of the unique
spherical surface of smallest radius enclosing a given bounded subset of $\mathbb{R}^n$.}
The MEB problem is considered significant in various issues of mathematics
(\emph{e.g}.\ geometry and topology, computational and discrete
geometry, non-Euclidean and convex geometry)
with applications including, among others, machine learning, computer graphics and facility location for which
a large amount of articles have been published.
This problem is also referred as
\emph{Euclidean 1-center}, \emph{minimax problem in facility locations},
\emph{Chebyshev radius and Chebyshev center},
\emph{smallest bounding ball}, \emph{minimum bounding sphere},
\emph{minimum spanning ball}, \emph{smallest enclosing sphere}, \emph{etc.}

It is believed that the earliest known statement of the MEB problem was first posed in \emph{circa}\/ 300 B.C.
by Euclid in his work \emph{``Elements''} and specifically in Book IV, Proposition 5 which refers to circumscribe a circle
about a given triangle~(\emph{cf.}\/ \cite{Euclid1482}).
The general case of finding the smallest circle enclosing a given finite set of points in $\mathbb{R}^2$
was first appeared in 1857 by
Sylvester\footnote{\,James Joseph Sylvester (1814 -- 1897), English mathematician.} \cite{Sylvester1857}.
In addition, the earliest known procedure for tackling the above case was also proposed in 1860 by Sylvester~\cite{Sylvester1860}.
The same procedure was independently proposed in 1885 by
Chrystal\footnote{\,George Chrystal (1851 -- 1911), Scottish mathematician.} \cite{Chrystal1885}.
Thereafter, various algorithms for determining the MEB of a set of points in the plane
have been proposed.

It is worth mentioning that, the first optimal linear-time method for fixed dimension
has been presented in 1982 by Megiddo~\cite{Megiddo1982}, (\emph{cf}.\ \cite{Megiddo1983}).
In addition, a simple and very fast in practice randomized algorithm
to solve the problem, also in $\mathbb{R}^2$ and $\mathbb{R}^3$, in expected linear time
has been proposed in 1991 by Welzl~\cite{Welzl1991}.

A relatively recent generalization of Welzl's algorithm for point sets lying in \emph{information-geometric spaces}
as well as an interesting application for solving a \emph{statistical model estimation problem}\/ by computing the
center of a finite set of univariate normal distributions,
have been given in 2008 by Nielsen and Nock~\cite{NielsenN2008}.

As we have already mentioned, Jung in 1901
was first answered to the question of best possible estimate of the radius of a sphere
in $\mathbb{R}^n$ enclosing a bounded subset of $\mathbb{R}^n$
of a given diameter (\emph{cf}.\ Theorem~\ref{JungTh}).
Thereafter, the generalization of MEB in $\mathbb{R}^n$
has been in the spotlight for many years and it remains an important issue of study and further research in the field.
It should be noted that,
the solution of this problem has been proved that is unique in the case of
the \emph{Euclidean geometry} (see, \emph{e.g.}\ Welzl 1991 \cite{Welzl1991}),
the \emph{hyperbolic geometry} (see, \emph{e.g.}\ Nielsen and Hadjeres 2015 \cite{NielsenH2015}),
the \emph{Riemannian positive-definite matrix manifold} (see, \emph{e.g.}\ Lang 1999 \cite{Lang1999}, Nielsen and Bhatia 2013 \cite{NielsenB2013}),
and
in any \emph{Cartan-Hadamard manifold} (see, \emph{e.g.}\  Arnaudon and Nielsen 2013 \cite{ArnaudonN2013}),
\emph{i.e.}\ Riemannian manifold that is complete and simply connected with non-positive sectional curvatures (see, \emph{e.g.}\  Nielsen 2020 \cite{Nielsen2020}).
Also, the above solution is unique in any \emph{Bruhat–Tits space} (see, \emph{e.g.}\ Lang 1999 \cite{Lang1999}),
\emph{i.e.} complete metric space with a semi-parallelogram property,
which includes the Riemannian manifold of symmetric positive definite matrices (see, \emph{e.g.}\  Nielsen 2020 \cite{Nielsen2020}).
On the other hand, the solution of the MEB problem may not be unique in a metric space,
as for instance, in the case of \emph{discrete Hamming metric space} (see, \emph{e.g.}\ Mazumdar \emph{et al.} 2013 \cite{MazumdarPS2013}) which,
in this case, results to NP-hard computation
(\emph{i.e}.\ the complexity class of decision problems that are intrinsically
harder than those that can be solved by a nondeterministic Turing machine in polynomial time)
(see, \emph{e.g.}\ Nielsen 2020 \cite{Nielsen2020}).
For various mathematical approaches to the MEB and related to it problems,
we refer the interest reader to~\cite{Vrahatis2024}.

\subsubsection{Generalizations of Jung's theorem to other spaces and geometries}
In general, Jung's enclosing theorem is considered as a cornerstone in the field that
strongly influencing later developments and generalizations to other spaces and non-Euclidean geometries.
A few examples are as follows.

A generalization of Jung's theorem to \emph{Minkowski spaces}\/
has been proved in 1938 by Bohnenblust \cite{Bohnenblust1938}.
A simpler proof of Bohnenblust's result has been given in 1955 by Leichtweiss \cite{Leichtweiss1955}.
The values of the \emph{expansion constant}\/ and \emph{Jung's constant}\/ determined by the geometric properties
of a Minkowski space have been studied in 1959 by Gr\"{u}nbaum~\cite{Gruenbaum1959}.
Generalizations of Jung's theorem for a \emph{pair of Minkowski spaces}\/
have been given in 2006 by Boltyanski and Martini~\cite{BoltyanskiM2006}.
Particularly, they gave generalizations of Jung’s theorem for the case where there are two Minkowski
metrics in $\mathbb{R}^n$, one for the diameter of a set $P\in \mathbb{R}^n$, and the other for
the radius of the Minkowski ball containing $P$.
An one-to-one connection between the \emph{Jung constant}\/ determined by Jung's theorem, \emph{i.e.}
the maximal ratio of the circumradius and the diameter of a body, in an
arbitrary Minkowski space and the maximal \emph{Minkowski asymmetry}\/ of the
complete bodies within that space has been stated
in 2017 by Brandenberg and Gonz\'{a}lez Merino~\cite{BrandenbergGM2017}.

An \emph{infinity dimensional Hilbert space}\/ extension of Jung's theorem and its application to \emph{measures of noncompactness}\/
have been proposed in 1984 by Dane\u{s}~\cite{Danes1984}.
A complete characterization of the \emph{extremal subsets of Hilbert spaces}, which constitutes
an infinite-dimensional generalization of the Jung theorem, has been proposed in 2006 by
Nguen-Khac and Nguen-Van~\cite{Nguen-KhacN-V2006}.
A generalization of Jung's theorem to convex regions on the \emph{$n$-dimensional spherical surface}\/
has been proposed in 1946 by Santal\'{o}~\cite{Santalo1946}.
The determination of the \emph{Jung diameter}\/ of the \emph{complex projective plane}\/ has be given and
lower bounds for all \emph{complex projective spaces}\/ have been obtained in 1985 by Katz~\cite{Katz1985}.
A generalization of Jung's theorem has been proved in 1985 by Dekster \cite{Dekster1985}
for which estimates are given for the side-lengths of certain inscribed simplices.
The obtained result has been extended to the \emph{spherical and hyperbolic spaces}\/
by Dekster in 1995 \cite{Dekster1995}
and to a class of \emph{metric spaces of curvature bounded above}\/
that includes \emph{Riemannian spaces} also by Dekster in 1997~\cite{Dekster1997}.

An analogue of Jung's theorem for \emph{Alexandrov spaces of curvature bounded above}\/
has been presented in 1997 by Lang and Schroeder~\cite{LangS1997}.
Furthermore, \emph{combinatorial generalizations}\/ of Jung's theorem
have been presented and
the \emph{``fractional''} and \emph{``colorful''} versions of this theorem
have been proved in 2013 by Akopyan~\cite{Akopyan2013}.

\subsection{Simplex thickness}\label{subsec:SimThick}

\begin{remark}
The thickness of a simplex is the dimensionless quantity that it is determined by
the ratio of the minimal distance from its barycenter to its boundary, to that of the diameter of the simplex.
In general, it provides a measure for the quality or how well shaped a simplex is.
\end{remark}

\begin{lemma}\label{LemThic}
Let $\sigma^m$
be an $m$-simplex in\/ ${\mathbb{R}}^n$ $(n \geqslant m)$ with diameter ${\rm diam}(\sigma^m)$
and boundary ${\vartheta}{\sigma}^m$. Assume that
$\kappa^m$ is its barycenter and let
$\sigma^{m}_{\neg i}$ be the $i$-th $(m-1)$-face of $\sigma^m$.
Then, the barycentric inradius of $\sigma^m$ is given by:
\begin{equation}\label{eq:LemInr}
\beta_{\rm inr}^m = \min_{0 \leqslant i \leqslant m}\Bigl\{ \min_{x \in {\sigma_{\neg i}^{m}}} \|\kappa^m-x\| \Bigr\},
\end{equation}
and the thickness of $\sigma^m$ is as folows:
\begin{equation}\label{eq:LemThic}
{\theta}({\sigma}^m) = \min_{0 \leqslant i \leqslant m}\Bigl\{ \min_{x \in {\sigma_{\neg i}^{m}}} \|\kappa^m-x\| \Bigr\} / {\rm diam}(\sigma^m).
\end{equation}
\end{lemma}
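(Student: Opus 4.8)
The plan is to reduce the minimization over the whole boundary of $\sigma^m$ to separate minimizations over its individual $(m-1)$-faces, relying on the fact that the boundary of a simplex is the finite union of these faces. First I would recall the definition of the barycentric inradius given in Eq.~(\ref{eq:BarInr}), namely $\beta_{\rm inr}^m = \min_{x \in \vartheta\sigma^m} \|\kappa^m - x\|$, and combine it with the earlier remark that $\vartheta\sigma^m = \bigcup_{i=0}^{m} \sigma_{\neg i}^m$.

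The key step is the elementary set-theoretic identity that the minimum of a function over a finite union of sets equals the minimum, over the index set, of the minima taken over each constituent set. Applying this with the continuous map $x \mapsto \|\kappa^m - x\|$ yields
\[
\beta_{\rm inr}^m = \min_{x \in \bigcup_{i=0}^{m} \sigma_{\neg i}^m} \|\kappa^m - x\| = \min_{0 \leqslant i \leqslant m} \Bigl\{ \min_{x \in \sigma_{\neg i}^m} \|\kappa^m - x\| \Bigr\},
\]
which is precisely Eq.~(\ref{eq:LemInr}). Each inner minimum is attained because every face $\sigma_{\neg i}^m$ is compact and the distance function is continuous, so all the minima are well defined. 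The formula for the thickness in Eq.~(\ref{eq:LemThic}) then follows immediately by dividing both sides by ${\rm diam}(\sigma^m)$ and invoking the definition of thickness in Eq.~(\ref{eq:thic}).

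I do not expect a genuine obstacle here, since the statement is in essence a restatement of the definition of $\beta_{\rm inr}^m$ after decomposing the boundary into its facets. The only points that merit explicit justification are the attainment of the minima, which rests on compactness of the faces $\sigma_{\neg i}^m$, and the interchange of the union with the minimum, which is immediate. The hardest part will therefore be purely expository: making the decomposition of $\vartheta\sigma^m$ explicit and noting that passing to the thickness is just a normalization by the diameter.
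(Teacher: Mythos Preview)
Your proof is correct and follows exactly the same approach as the paper: use the decomposition $\vartheta\sigma^m = \bigcup_{i=0}^{m}\sigma^{m}_{\neg i}$ together with the defining equations~(\ref{eq:BarInr}) and~(\ref{eq:thic}) to rewrite the minimum over the boundary as a minimum over the facets. Your additional remarks on compactness and attainment of the minima are harmless extra care; the paper simply states that the result follows easily from these ingredients.
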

\begin{proof}
Since it holds that ${\vartheta}\kern0.02cm$${\sigma}^m = \cup _{i=0}^{m} \sigma^{m}_{\neg i}$,
the proof can be easily obtained by companying Eq.~(\ref{eq:BarInr}) and Eq.~(\ref{eq:thic}).
\end{proof}

\begin{lemma}\label{lem:3.2}
Assume that $\sigma^m = [{\upsilon}^0, {\upsilon}^1, \ldots, {\upsilon}^m]$
is an $m$-simplex in\/ ${\mathbb{R}}^n$ $(n \geqslant m)$. Let
$\kappa^m$ be the barycenter of $\sigma^m$ and let
$\kappa_i^m$ be the barycenter of the $i$-th $(m-1)$-face $\sigma^{m}_{\neg i}$ of $\sigma^m$. Then for each $i=0, 1, \ldots, m$ it holds that:
\begin{equation}\label{eq:3.2}
\|\kappa^{m} - \kappa^{m}_i\| = \frac{1}{m(m + 1)} \Biggl(m\v j0im  \nr {\upsilon}i{\upsilon}j^2-
    \sum_{\substack{p=0\\p\ne i}}^{m-1}
      \v q{p+1}im  \nr {\upsilon}p{\upsilon}q^2\Biggr)^{1/2}.
\end{equation}
\end{lemma}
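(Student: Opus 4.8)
The plan is to derive this identity as a direct corollary of the median-ratio relation established in the proof of Theorem~\ref{SimCommandinoTh} together with the closed-form expression already furnished by Lemma~\ref{lem:circ}. The crucial point is that the quantity inside the parentheses is \emph{exactly} the same expression that appears on the right-hand sides of Eq.~(\ref{eq:2.4}) and Eq.~(\ref{eq:3.1}); hence no fresh edge-length computation is needed, and the whole task reduces to tracking scalar prefactors.

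First I would invoke Eq.~(\ref{eq:Km4}), which records that the barycenter $\kappa^m$ divides the $i$-th median in the ratio $1:m$, that is, $\|\kappa^m - {\upsilon}^i\| = m\,\|\kappa^m_i - \kappa^m\|$. Rearranging gives at once $\|\kappa^m - \kappa^m_i\| = \frac{1}{m}\,\|\kappa^m - {\upsilon}^i\|$. Then I would substitute the formula of Eq.~(\ref{eq:3.1}) from Lemma~\ref{lem:circ}, whose prefactor is $\frac{1}{m+1}$; multiplying by the extra factor $\frac{1}{m}$ produces the asserted prefactor $\frac{1}{m(m+1)}$ while leaving the square-root term untouched. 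This is precisely Eq.~(\ref{eq:3.2}).

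I expect no substantive obstacle: the lemma is a one-line consequence of Lemma~\ref{lem:circ} and the $1:m$ ratio Eq.~(\ref{eq:Km4}) (itself following from the collinearity relation Eq.~(\ref{eq:Km3})). An equivalent route, should one prefer to cite the Apollonius formula directly, is to combine Eq.~(\ref{eq:Km3}) with Eq.~(\ref{eq:2.4}): taking norms in Eq.~(\ref{eq:Km3}) yields $\|\kappa^m - \kappa^m_i\| = \frac{1}{m+1}\,\|\mu_i^m\|$, and inserting $\|\mu_i^m\| = \frac{1}{m}(\cdots)^{1/2}$ gives the same result. The only care required is the elementary bookkeeping ensuring that the prefactors $1/(m+1)$ and $1/m$ multiply to $1/(m(m+1))$.
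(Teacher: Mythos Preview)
Your proposal is correct and matches the paper's own proof essentially verbatim: the paper simply says ``The proof can be easily obtained using Eq.~(\ref{eq:Km4}) and Eq.~(\ref{eq:3.1}),'' which is precisely your primary route of dividing the formula of Lemma~\ref{lem:circ} by $m$ via the $1{:}m$ median ratio.
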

\begin{proof}
The proof can be easily obtained using Eq.~(\ref{eq:Km4}) and Eq.~(\ref{eq:3.1}).
\end{proof}

\begin{definition}
The quantity
\begin{equation}\label{eq:EstInr}
\tilde{\beta}_{\rm inr}^m = \min_{0 \leqslant i \leqslant m}\|\kappa^{m} - \kappa^{m}_i\|,
\end{equation}
is called \emph{estimation of the barycentric inradius}\/ $\beta_{\rm inr}^m$ of $\sigma^m$, while the quantity
\begin{equation}\label{eq:EstThic}
\tilde{\theta}({\sigma}^m) = \min_{0 \leqslant i \leqslant m}\|\kappa^{m} - \kappa^{m}_i\|/ {\rm diam}(\sigma^m),
\end{equation}
is called \emph{estimation of the thickness}\/ ${\theta}({\sigma}^m)$ of $\sigma^m$.
\end{definition}

\begin{remark}
Obviously, the quantities $\tilde{\beta}_{\rm inr}^m$ and $\tilde{\theta}({\sigma}^m)$ constitute upper bounds for
the quantities $\beta_{\rm inr}^m$ and ${\theta}({\sigma}^m)$, respectively.
\end{remark}

\begin{lemma}\label{lem:inr-reg}
Assume that $\sigma ^m$ is a regular $m$-simplex in ${\mathbb{R}}^n$ with diameter\/ ${\rm diam}(\sigma^m)$,
then the barycentric inradius $\beta_{\rm inr}^m$ is given by:
\begin{equation}\label{eq:equi-inr}
\beta_{\rm inr}^m = \bigl[{2 m (m+1)}\bigr]^{-1/2} {\rm diam}(\sigma^m).
\end{equation}
\end{lemma}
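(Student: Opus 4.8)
The plan is to reduce the computation of $\beta_{\rm inr}^m$ to the already-established distance from the barycenter to a facet barycenter, and then to specialize the general edge-length formula to the regular case. First I would invoke Theorem~\ref{thm:GenCarnot}(c), which for a regular simplex identifies the barycentric inradius with $\|\kappa^m-\kappa_j^m\|$ for every $j$. This identification rests on Theorem~\ref{thm:correg}: there the $j$-th median coincides with the $j$-th altitude, so the foot of the perpendicular dropped from $\kappa^m$ onto the facet $\sigma^m_{\neg j}$ is precisely the facet barycenter $\kappa_j^m$, and hence $\|\kappa^m-\kappa_j^m\|$ realizes the minimal boundary distance in formula~\eqref{eq:LemInr}.

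Next I would evaluate $\|\kappa^m-\kappa_j^m\|$ using Lemma~\ref{lem:3.2}. In the regular case all $m$ edges meeting at the apex ${\upsilon}^i$ have length ${\rm diam}(\sigma^m)$, so the first sum equals $m\,{\rm diam}(\sigma^m)^2$, while the opposite facet $\sigma^m_{\neg i}$ contributes its $\binom{m}{2}$ equal edges, giving $\binom{m}{2}{\rm diam}(\sigma^m)^2$ in the second sum. The bracketed quantity therefore collapses to $\bigl(m^2-\binom{m}{2}\bigr){\rm diam}(\sigma^m)^2=\frac{m(m+1)}{2}\,{\rm diam}(\sigma^m)^2$, exactly as in the computation behind Eq.~\eqref{eq:correg1}. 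Multiplying by the prefactor $1/[m(m+1)]$ and taking the square root then yields the claimed value $[2m(m+1)]^{-1/2}\,{\rm diam}(\sigma^m)$.

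As a cross-check I would note a shorter route that bypasses Lemma~\ref{lem:3.2}: Eq.~\eqref{eq:Km4} (equivalently the ratio $1:m$ of Theorem~\ref{SimCommandinoTh}) gives $\|\kappa^m-\kappa_j^m\|=\frac1m\|\kappa^m-{\upsilon}^i\|=\frac1m\,\beta_{\rm cir}^m$, and Lemma~\ref{lem:circ-reg} already supplies $\beta_{\rm cir}^m=[m/(2m+2)]^{1/2}\,{\rm diam}(\sigma^m)$. One checks immediately that $\frac1m[m/(2m+2)]^{1/2}=[2m(m+1)]^{-1/2}$, reproducing the same value and confirming the consistency of the two inradius/circumradius formulas for the regular simplex.

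There is no serious obstacle here, as the computation is entirely routine. The only points requiring care are the two edge counts — $m$ edges at the apex ${\upsilon}^i$ versus the $\binom{m}{2}$ edges lying in the opposite facet $\sigma^m_{\neg i}$ — and the justification, furnished by Theorem~\ref{thm:GenCarnot}(c) together with Theorem~\ref{thm:correg}, that for a regular simplex the minimum in~\eqref{eq:LemInr} is attained at a facet barycenter, so that $\beta_{\rm inr}^m$ genuinely reduces to $\|\kappa^m-\kappa_j^m\|$.
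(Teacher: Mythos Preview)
Your proposal is correct and follows essentially the same route as the paper: apply Lemma~\ref{lem:3.2} to the regular simplex, count the $m$ apex edges and the $\binom{m}{2}$ facet edges, and simplify. If anything you are more careful than the paper, which jumps directly from Lemma~\ref{lem:3.2} to the value of $\beta_{\rm inr}^m$ without pausing to justify the identification $\beta_{\rm inr}^m=\|\kappa^m-\kappa_j^m\|$; your explicit appeal to Theorem~\ref{thm:GenCarnot}(c) (and the alternative cross-check via $\beta_{\rm inr}^m=\beta_{\rm cir}^m/m$) fills that small gap.
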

\begin{proof}
Since the lengths of the $m$ edges that have the common vertex ${\upsilon}^i$
as well as the lengths of the $\binom{m}{2}$ edges of the $i$-th $(m-1)$-face $\sigma^{m}_{\neg i}$ are equal to ${\rm diam}(\sigma^m)$,
by applying Lemma~\ref{lem:3.2} we obtain
\begin{align*}
\beta_{\rm inr}^m = &\frac{1}{m (m+1)}\left[m^2 {\rm diam}(\sigma^m)^2 -\binom{m}{2} {\rm diam}(\sigma^m)^2 \right]^{1/2}\\
                  = &\frac{1}{m (m+1)}\left[ \frac{m (m+1)}{2}\right]^{1/2}{\rm diam}(\sigma^m)
                  = \left[\frac{1}{2 m (m+1)}\right]^{1/2} {\rm diam}(\sigma^m).
\end{align*}
Thus the lemma is proved.
\end{proof}

\begin{remark}
The simplex thickness gives a \emph{quality measure}\/ of a simplex.
This is important, among others, to piecewise linear approximations of smooth
mappings in fixed point theory and in complementarity theory.
Also, it is significant to simplicial and continuation methods
for approximating fixed points as well as in locating and computing periodic orbits of nonlinear mappings and
solutions of systems of nonlinear algebraic and/or transcendental
equations. In addition, in general, analysis of simplices characteristics in $\mathbb{R}^n$
is useful in the implementation of high dimensional elements of the well known and widely used finite element method
that gives a formalism for deriving discrete methods for approximating solutions of differential equations
(see, \emph{e.g.}~\cite{BanhelyiCH2020,BoissonnatDGLW2021,BoissonnatW2022,BrennerS2008,Kojima1978,%
Saigal1979,Vrahatis1995,Whitehead1940}).
\end{remark}

\begin{definition}[$\!\!$\cite{Gale1953}]
A set $P$\/ is said to be \emph{inscribed}\/ in an $n$-simplex $\sigma^n$ in $\mathbb{R}^n$ if $P\subset \sigma^n$ and $P$ intersects every $(n-1)$-face of $\sigma^n$.
\end{definition}

\begin{remark}
The following important theorem states that:
``\emph{The regular $n$-simplex whose inscribed sphere has diameter 1
will cover any $n$-dimensional set of diameter~1}''~\cite[p.222]{Gale1953}.
\end{remark}

\begin{theorem}[Gale's theorem (1953)~\cite{Gale1953}]\label{GaleTh}
Assume that $P$ is a closed subset of $\mathbb{R}^n$ of diameter ${\rm diam}(P)=1$. Then $P$\/ can be inscribed in a regular $n$-simplex $\sigma^n$ of diameter
${\rm diam}(\sigma^n) \leqslant\bigl[n(n+1)/2\bigr]^{1/2}$.
\end{theorem}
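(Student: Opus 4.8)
The plan is to construct, for the given compact set $P$, a regular $n$-simplex circumscribed about $P$ whose every facet is tangent to $P$, and then to choose its orientation so that the inscribed ball is small enough. Fix the $n+1$ outward unit facet-normals $u_0,\dots,u_n$ of a regular $n$-simplex; they satisfy $\sum_{i=0}^n u_i = 0$ and $u_i\cdot u_j=-1/n$ for $i\ne j$, and any $n$ of them are linearly independent. For a rotation $Q\in SO(n)$, set $h_i = \max_{x\in P}(Qu_i)\cdot x$ (finite and attained since $P$ is compact) and let $\sigma^n(Q)=\bigcap_{i=0}^n\{x: (Qu_i)\cdot x\le h_i\}$. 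First I would check that $\sigma^n(Q)$ is a regular $n$-simplex: because its $n+1$ facet normals are the rotated fixed directions $Qu_i$, its shape is determined up to translation and scaling, hence it is similar to the standard regular simplex. Moreover $P\subset\sigma^n(Q)$ by construction, and each $h_i$ is attained at some $x_i^\ast\in P$ lying on the $i$-th bounding hyperplane, so $x_i^\ast$ belongs to the $i$-th facet; thus $P$ meets every $(n-1)$-face and is inscribed in $\sigma^n(Q)$ in the required sense.

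Next I would express the size of $\sigma^n(Q)$ through its inradius. Writing $c$ for the incenter, the equal-distance conditions $h_i-(Qu_i)\cdot c=\rho$ for all $i$ give, after summing and using $\sum_i Qu_i=0$, the identity $\rho=\frac{1}{n+1}\sum_{i=0}^n h_i$; since for a regular simplex the incenter coincides with the barycenter, this $\rho$ is exactly the (barycentric) inradius $\beta_{\rm inr}^n$ of $\sigma^n(Q)$. By Lemma~\ref{lem:inr-reg} it then follows that ${\rm diam}(\sigma^n(Q))=[2n(n+1)]^{1/2}\,\rho$, so it remains only to exhibit an orientation with $\rho\le 1/2$, i.e.\ with $\sum_i h_i\le (n+1)/2$.

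The heart of the argument is an averaging step over the rotation group. Denoting by $s_P(u)=\max_{x\in P}u\cdot x$ the support function of $P$, the quantity $\sum_i h_i=\sum_i s_P(Qu_i)$ is invariant under translations of $P$ (because $\sum_i u_i=0$). Averaging over $Q$ uniformly distributed on $SO(n)$, each term contributes the same mean value, so $\mathbb{E}_Q\bigl[\sum_i s_P(Qu_i)\bigr]=(n+1)\,\mathbb{E}_{u\in S^{n-1}}[s_P(u)]$, the inner average being over the uniform measure on the sphere. By the symmetry $u\mapsto -u$ of that measure and the identity $s_P(u)+s_P(-u)=w_P(u)$ (the width of $P$ in direction $u$), one gets $\mathbb{E}_{u}[s_P(u)]=\frac12\mathbb{E}_u[w_P(u)]\le\frac12\,{\rm diam}(P)=\frac12$, since no width exceeds the diameter. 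Hence the average of $\sum_i h_i$ is at most $(n+1)/2$, so some orientation $Q$ achieves $\sum_i h_i\le(n+1)/2$.

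Combining the pieces, for this orientation $\rho=\beta_{\rm inr}^n\le 1/2$ and therefore ${\rm diam}(\sigma^n(Q))=[2n(n+1)]^{1/2}\rho\le\frac12[2n(n+1)]^{1/2}=[n(n+1)/2]^{1/2}$, which is the asserted bound. The step I expect to be the main obstacle is the rigorous geometric verification that $\sigma^n(Q)$ is genuinely a regular simplex with inradius $\frac{1}{n+1}\sum_i h_i$ (that is, that the facet-distance system is consistent and that the incenter equals the barycenter); once this is in place, the averaging estimate together with Lemma~\ref{lem:inr-reg} closes the proof.
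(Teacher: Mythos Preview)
The paper does not give a proof of this statement at all: Theorem~\ref{GaleTh} is quoted from Gale's 1953 paper and left without proof, so there is nothing in the present paper to compare your argument against.

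That said, your proposed proof is essentially correct and self-contained. The construction of $\sigma^n(Q)$ is sound: boundedness follows because $\sum_i Qu_i=0$ forces any recession direction $d$ to satisfy $Qu_i\cdot d=0$ for all $i$, hence $d=0$; and since $P\subset\sigma^n(Q)$ while each support point $x_i^\ast$ lies on the $i$-th bounding hyperplane, it lies on the $i$-th facet, so $P$ is indeed inscribed. The identification $\rho=\frac{1}{n+1}\sum_i h_i$ is an elementary linear-algebra computation (solve for $c$ from any $n$ of the equations and check the last is automatic because $\sum_i Qu_i=0$), and then Lemma~\ref{lem:inr-reg} converts the inradius bound into the diameter bound. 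The averaging step is the standard mean-width argument: $\mathbb{E}_Q[s_P(Qu_i)]=\mathbb{E}_{u\in S^{n-1}}[s_P(u)]=\tfrac12\,\mathbb{E}_u[w_P(u)]\le\tfrac12$, and translation invariance of $\sum_i h_i$ makes this independent of the position of $P$. One small point worth making explicit in a final write-up: the hypothesis ``closed of diameter $1$'' guarantees that $P$ is bounded, hence compact, which you use when asserting that each $h_i$ is attained.

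For context, Gale's own argument proceeds rather differently: he does not average over rotations but instead uses a Helly-type compactness/selection argument on the family of translates of a fixed regular simplex containing $P$, reducing to a finite configuration. Your averaging proof is shorter and more analytic, at the cost of invoking Haar measure on $SO(n)$; Gale's is purely combinatorial-geometric. Either route yields the sharp constant $[n(n+1)/2]^{1/2}$.
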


\begin{lemma}\label{lem:equi-diam}
The diameter of the regular $n$-simplex $\sigma^n$ in\/ $\mathbb{R}^n$ whose the inscribed sphere has diameter 1, is given by
${\rm diam}(\sigma^n) = \bigl[{n(n+1)/2}\bigr]^{1/2}$.
\end{lemma}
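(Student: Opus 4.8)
The plan is to read this off as an immediate inversion of Lemma~\ref{lem:inr-reg}. First I would record what the hypothesis ``the inscribed sphere has diameter $1$'' means quantitatively: the inscribed sphere of $\sigma^n$ is by definition the sphere of radius equal to the inradius $\rho_{\rm inr}^n$, so diameter $1$ forces $\rho_{\rm inr}^n = 1/2$. The key observation that makes the inradius accessible is that $\sigma^n$ is \emph{regular}: by the remark following Theorem~\ref{thm:GenCarnot}, for a regular simplex the barycentric inradius $\beta_{\rm inr}^n$ coincides with the inradius $\rho_{\rm inr}^n$ (equivalently, the incenter coincides with the barycenter). Hence $\beta_{\rm inr}^n = 1/2$ as well.

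With that identification in hand, I would invoke Lemma~\ref{lem:inr-reg} with $m=n$, which gives the closed-form expression
\[
\beta_{\rm inr}^n = \bigl[\,2n(n+1)\,\bigr]^{-1/2}\,{\rm diam}(\sigma^n).
\]
Substituting $\beta_{\rm inr}^n = 1/2$ and solving for the diameter yields
\[
{\rm diam}(\sigma^n) = \tfrac{1}{2}\bigl[\,2n(n+1)\,\bigr]^{1/2}
= \bigl[\,n(n+1)/2\,\bigr]^{1/2},
\]
where the last equality is the routine simplification $\tfrac12\sqrt{2n(n+1)} = \sqrt{n(n+1)/2}$. This is exactly the claimed value, completing the argument.

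I do not anticipate a genuine obstacle here, since the result is a one-line algebraic rearrangement of Lemma~\ref{lem:inr-reg} once the scaling is fixed; the only conceptual point worth stating explicitly is the passage from $\rho_{\rm inr}^n$ (what the hypothesis controls, via the inscribed sphere) to $\beta_{\rm inr}^n$ (what Lemma~\ref{lem:inr-reg} computes), which is legitimate precisely because regularity makes the two coincide. If one wished to avoid even citing that remark, the same equality $\rho_{\rm inr}^n = \beta_{\rm inr}^n$ follows directly from Theorem~\ref{thm:correg}, which places the orthocenter, and hence the incenter, at the barycenter for regular simplices.
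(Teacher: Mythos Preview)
Your proposal is correct and follows essentially the same route as the paper: the paper's own proof is the single line ``Since the diameter of the inscribed sphere is $2\beta_{\rm inr}^n = 1$, the proof follows from Eq.~(\ref{eq:equi-inr}),'' which is exactly your inversion of Lemma~\ref{lem:inr-reg}. The only difference is that you make explicit the identification $\rho_{\rm inr}^n = \beta_{\rm inr}^n$ for regular simplices (citing the remark after Theorem~\ref{thm:GenCarnot}), whereas the paper silently writes $\beta_{\rm inr}^n$ for the inscribed-sphere radius; your extra care is justified and does not change the substance of the argument.
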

\begin{proof}
Since the diameter of the inscribed sphere is $2 \beta_{\rm inr}^n =1$, the proof follows from Eq.~(\ref{eq:equi-inr}).
\end{proof}

\begin{remark}
Gale in~\cite[p.224]{Gale1953} has pointed out that, since $\sqrt{n(n+1)/2}$ is the diameter
of the regular $n$-simplex whose inscribed sphere has diameter~1,
the inequality in Theorem~\ref{GaleTh} is the best possible.
Also, he applied Theorem~\ref{GaleTh} to the case $n = 2$ and he showed that every set in $\mathbb{R}^2$
of diameter~1 is the union of three sets each of diameter
less than $\sqrt{3}/2 <1$. This result is related to the conjecture of
Borsuk\footnote{\,Karol Borsuk (1905 -- 1982), Polish mathematician.}
that states:
``\emph{Every set in $\mathbb{R}^n$ of diameter 1 is the union of $n+1$ sets each of diameter less than~1}''~\cite{Borsuk1933}.
\end{remark}

\subsubsection{Relations between the circumradius, inradius, diameter and width}
In 1958 Eggleston~\cite{Eggleston1958} pointed out that
there are twelve possible inequalities between any two of the four following characteristics of a convex set $P$ in $\mathbb{R}^n$
(containing more than a single point).

Namely, the characteristics are:
(a)~\emph{circumradius} $\rho_{\rm cir}^n$ (\emph{i.e}.\ the radius of the smallest spherical surface enclosing the set),
(b)~\emph{inradius} $\rho_{\rm inr}^n$ (\emph{i.e}.\ the radius of the greatest spherical surface which is contained in the set),
(c)~\emph{diameter} ${\rm diam}(P)$ (\emph{i.e}.\ the maximal distance of any two points of the set)  and
(d)~\emph{width} ${\rm wid}(P)$ (\emph{i.e}.\ the minimum distance of two parallel supporting hyperplanes of the set).

The twelve possible inequalities are categorized as follows. Specifically, the first four trivial inequalities are as follows:
\begin{subequations}\label{eggl-ineq:1}
\begin{align}
\rho_{\rm inr}^n &\leqslant \rho_{\rm cir}^n , & \rho_{\rm inr}^n  & \leqslant \frac{1}{2} {\rm wid}(P), \label{eggl-ineq:c1a}\\[0.3cm]
{\rm diam}(P) &\leqslant 2 \rho_{\rm cir}^n , & {\rm wid}(P)  & \leqslant {\rm diam}(P). \label{eggl-ineq:c1b}
\end{align}
\end{subequations}
The above inequalities imply the following two ones:
\begin{equation}\label{eggl-ineq:2}
\kern0.4cm\rho_{\rm inr}^n \leqslant \frac{1}{2} {\rm diam}(P), \kern1.6cm
 {\rm wid}(P) \leqslant 2 \rho_{\rm cir}^n.
\end{equation}
Furthermore, Eggleston pointed out that since in the case where $\lambda$ is not infinite
there are no inequalities of the following four expressions:
\begin{subequations}\label{eggl-ineq:3}
\begin{align}
\rho_{\rm cir}^n &\leqslant \lambda\,\rho_{\rm inr}^n, & \rho_{\rm cir}^n  & \leqslant \lambda\,{\rm wid}(P), \label{eggl-ineq:c3a}\\[0.3cm]
{\rm diam}(P) &\leqslant \lambda\,\rho_{\rm inr}^n, & {\rm diam}(P)  & \leqslant \lambda\,{\rm wid}(P), \label{eggl-ineq:c3b}
\end{align}
\end{subequations}
and the remaining two inequalities are quite interesting and have the forms:
\begin{equation}\label{eggl-ineq:4}
\kern0.4cm\rho_{\rm cir}^n \leqslant \lambda\,{\rm diam}(P), \kern1.6cm
{\rm wid}(P) \leqslant \lambda\,\rho_{\rm inr}^n.
\end{equation}
Particularly,
the proof of the first of the above inequalities~(\ref{eggl-ineq:4}) that has been given by Eggleston (\emph{cf}.\ \cite[Th.~49]{Eggleston1958})
is related to \emph{Jung's theorem} (\emph{cf}.\ Theorem \ref{JungTh}), while his proof of the second inequality (\emph{cf}.\ \cite[Th.~50]{Eggleston1958})
is related to \emph{Steinhagen's theorem}\/ that it has been given in 1922 by Steinhagen~\cite{Steinhagen1922}
(\emph{cf}.\ Theorem~\ref{SteinhagenTh}).

In addition, it is worth mentioning that, in 1987
Perel${}^\prime\!$man\footnote{\,Grigorii Yakovlevich Perel${}^\prime\!$man (b.\ 1966), Russian mathematician.}
in his publication \emph{``$k$~radii of a compact convex body''}~\cite{Perelman1987}
considered the \emph{internal}\/ and \emph{external}\/ $k$-radii
of a compact  (\emph{i.e}.\ closed and bounded) convex body $P$ in $\mathbb{R}^n$ that are particular cases of the
\emph{diameters}\/ used in \emph{{B}ern\u{s}te\u{\i}n and Kolmogorov approximation theory}, respectively.
Specifically, the \emph{internal $k$-radius}\/ $r_k(P)$ for $k = 1, 2, \ldots, n$ of $P$
is defined as the radius of the greatest $k$-dimensional sphere contained in $P$, while
the \emph{external $k$-radius}\/ $R_k(P)$ for $k = 1, 2, \ldots, n$ of $P$
is defined as the smallest radius of the solid cylinder containing $P$
with a $(n + 1 - k)$-dimensional spherical cross section and
a $(k - 1)$-dimensional generator.

Perel${}^\prime\!$man in~\cite{Perelman1987} and independently Pukhov in \cite{Pukhov1979} (\emph{cf.}~\cite{Gonzalez-Merino2017})
studied the
relation between these internal and external $k$-radii measures, and proved the following theorem:
\begin{theorem}[Perel${}^\prime\!$man-Pukhov theorem~\cite{Perelman1987}]\label{PerelPukhTh}
Let $P$ be a compact convex body in $\mathbb{R}^n$, then it holds that:
\begin{equation}\label{PerelPukhEq}
\frac{R_k(P)}{r_k(P)} \leqslant k + 1, \quad 1 \leqslant k \leqslant n ,
\end{equation}
where $R_k(P)$ and  $r_k(P)$ are the external and internal $k$-radii of $P$, respectively.
\end{theorem}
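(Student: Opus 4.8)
The plan is to argue from the variational characterizations of the two radii. Writing $\pi_W$ for orthogonal projection onto a linear subspace $W$, containment of $P$ in a cylinder with $(k-1)$-dimensional axis and $(n+1-k)$-dimensional spherical cross-section is equivalent to $\pi_W P$ lying in a ball of $W=(\text{axis direction})^{\perp}$; hence
\[
R_k(P)=\min_{\dim W=n+1-k}\rho_{\rm cir}(\pi_W P),\qquad r_k(P)=\max_{\dim E=k}\rho_{\rm inr}(P\cap E),
\]
the circumradius and inradius being taken inside $W$ and inside the $k$-flat $E$ respectively. As a sanity check, $k=1$ reads $\rho_{\rm cir}^n\leqslant{\rm diam}(P)$ and $k=n$ reads ${\rm wid}(P)\leqslant 2(n+1)\,\rho_{\rm inr}^n$, the crude companions of Jung's Theorem~\ref{JungTh} and of Steinhagen's theorem; this suggests the bound should issue from a single device degrading gracefully to both extremes, and that the essential difficulty lies in the \emph{complementary} pairing of the inner $k$-ball with an $(n+1-k)$-dimensional projection.

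First I would establish the inequality with the (non-sharp) constant $n$ in a transparent way via John's theorem, and then isolate exactly what must be sharpened. Let $\mathcal E\subseteq P$ be the maximal-volume inscribed ellipsoid, centred at the origin after translation, with semi-axes $a_1\geqslant a_2\geqslant\cdots\geqslant a_n$ along an orthonormal frame $e_1,\dots,e_n$; John's theorem gives $P\subseteq n\,\mathcal E$. Taking $E={\rm span}(e_1,\dots,e_k)$, the section $\mathcal E\cap E$ is a $k$-ellipsoid with semi-axes $a_1,\dots,a_k$, so $P\cap E$ contains a $k$-ball of radius $a_k$, whence $r_k(P)\geqslant a_k$. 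Taking instead $W={\rm span}(e_k,\dots,e_n)$ of dimension $n+1-k$, the projection $\pi_W\mathcal E$ is an ellipsoid with semi-axes $a_k,\dots,a_n$ and circumradius $a_k$, so $\rho_{\rm cir}(\pi_W P)\leqslant n\,a_k$ and $R_k(P)\leqslant n\,a_k\leqslant n\,r_k(P)$. The point of aligning $W$ with the \emph{smallest} ordered axes and $E$ with the \emph{largest} is that both estimates are governed by the single middle axis $a_k$; the only slack is the factor $n$ from John's dilation of the full body, and the spike-type examples (a low-dimensional cone, whose long directions can be absorbed into the axis) confirm that this alignment is the correct one.

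The genuinely harder task is to replace $n$ by $k+1$, and here I would abandon the generic ellipsoid in favour of the optimality of the inscribed $k$-ball. Let $B(o,r_k)$ be a largest inscribed $k$-ball, lying in an optimal $k$-flat $E_0$. Applying John's theorem \emph{inside} $E_0$ to the $k$-body $P\cap E_0$ and then Carath\'{e}odory's Theorem~\ref{CaratheodoryTh}, one extracts at most $k+1$ contact points $u_0,\dots,u_k\in\partial B(o,r_k)\cap\partial(P\cap E_0)$ with $o\in{\rm co}\{u_0,\dots,u_k\}$, each carrying a supporting hyperplane of $P\cap E_0$ within $E_0$; these confine $P\cap E_0$ to a $k$-simplex circumscribed about $B(o,r_k)$, whose extent from a suitable $(k-1)$-flat $L\subset E_0$ through $o$ is controlled by the Carath\'{e}odory weights and is of order $(k+1)\,r_k$. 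Combining this in-flat bound with a bound on the part of $P$ transverse to $E_0$ should then yield $P\subseteq L+(k+1)r_k\,B^{\,n+1-k}$, i.e.\ $R_k(P)\leqslant(k+1)r_k(P)$.

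The main obstacle is precisely this last assembly: converting the Grassmannian maximality of $E_0$ into a quantitative control of $P$ in the $n+1-k$ directions transverse to the optimal flat (the naive cone/tilting argument fails, since a long low-dimensional protrusion can be hidden in the axis and does \emph{not} enlarge the inscribed $k$-ball), and verifying that the Carath\'{e}odory count $k+1$ — rather than the full John contact number — is exactly what survives when the supporting hyperplanes are packaged into the enclosing cylinder. This is where the constant $k+1$ is truly earned and where the dimension-independent sharpening beyond the elementary John bound $n$ must be carried out with care.
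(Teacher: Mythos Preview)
The paper does not prove this theorem; it is quoted as a cited result from \cite{Perelman1987} (and, via \cite{Gonzalez-Merino2017}, independently \cite{Pukhov1979}) with no accompanying proof environment. There is therefore no ``paper's own proof'' against which to benchmark your attempt.

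As for the proposal itself, it is not a proof but a two-stage plan whose decisive stage is left open. The John-ellipsoid argument is clean and correct, and the alignment of $E$ and $W$ along the ordered semi-axes so that both estimates pivot on the single value $a_k$ is exactly right; but this only yields $R_k(P)\leqslant n\,r_k(P)$, strictly weaker than the stated bound $k+1$ for every $k<n-1$. You then identify the passage from $n$ to $k+1$ as ``the genuinely harder task'' and sketch a mechanism (Carath\'{e}odory contact points of the optimal inscribed $k$-ball, packaged into a circumscribed $k$-simplex and then into a cylinder), but you explicitly name the transverse-control step as ``the main obstacle'' and do not resolve it. That step is not a technicality: the assertion that Grassmannian maximality of the optimal $k$-flat $E_0$ forces $P$ to lie within distance $(k+1)r_k$ of some $(k-1)$-flat in \emph{all} $n+1-k$ transverse directions --- not merely inside $E_0$ --- is essentially the content of the theorem, and your own spike/cone caveat shows why the naive tilting argument fails. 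Until that transverse estimate is actually carried out, what you have is a plausible outline with the right ingredients, not a proof.
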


\begin{remark}
Perel${}^\prime\!$man in~\cite{Perelman1987} pointed out, among others, that:
(a)
The equality $r_k(P) = R_k(P)$ holds for certain bodies, \emph{e.g}.\ spheres,
(b)
Inequality~(\ref{PerelPukhEq}) is also valid for finite $r_k(P)$ without the assumption that the convex body $P$ is bounded,
(c)
The upper bound of Theorem~\ref{PerelPukhTh} is not a least upper bound
for any $1 \leqslant k \leqslant n$
and the estimate given in Inequality~(\ref{PerelPukhEq}) is extremely ``rough'' (non-sharp),
(d)
Precise estimates of $R_k(P)/r_k(P)$ for the cases $k = 2, 3, \ldots, n - 1$
are, evidently, unknown,
(e) For the cases $k = 1$ and $k = n$,
the maximum of $R_k(P)/r_k(P)$ can be reached on a regular $n$-simplex,
(f)
Bounds which are, in fact, least upper bounds for $R_k(P)/r_k(P)$ is the most interesting, since the bound for $R_{\ell}(P)/r_k(P)$
for $k \leqslant \ell$  follows from it,
(g)
Least upper bounds for the cases $R_1(P)/r_1(P)$ and $R_d(P)/r_d(P)$ can be obtained
by Jung's theorem~\cite{Jung1901} and the generalization of the
Blaschke\footnote{\,Wilhelm Johann Eugen Blaschke (1885 -- 1962), Austrian mathematician.}
theorem~\cite{Blaschke1914},
due to
Steinhagen\footnote{\,Paul Steinhagen, German mathematician, Dr. phil. Universit\"{a}t Hamburg 1920.}
\cite{Steinhagen1922}, respectively.
\end{remark}

\begin{theorem}[Steinhagen's theorem (1922)~\cite{Steinhagen1922}]\label{SteinhagenTh}
Let $P$ be a bounded convex subset of\, $\mathbb{R}^n$ of inradius $\rho_{\rm inr}^n$.
Then, the width ${\rm wid}(P)$ satisfies:
\begin{equation}\label{eggl-ineq:Jungste}
{{\rm wid}(P)} \leqslant
\begin{cases}
{2 \sqrt{n}\, \rho_{\rm inr}^n,} &{\text{if}}\ {n}\,\ {\text{is odd}},\\[0.3cm]
\bigl[{2(n+1)/\sqrt{n+2}\,\bigr]\, \rho_{\rm inr}^n,} &{\text{if}}\ {n}\,\ {\text{is even}}.
\end{cases}
\end{equation}
\end{theorem}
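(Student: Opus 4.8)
The plan is to pass from the convex body to a tangent polyhedron cut out by the largest inscribed ball, and then to identify the regular simplex as the extremal shape. First I would place the maximal inscribed ball $B^n_{r,c}$ of $P$ with $r=\rho_{\rm inr}^n$ and, after a translation, $c=0$. Since this ball can be neither enlarged nor shifted while staying inside $P$, the outward unit normals $u_1,\dots,u_k$ at the contact points $r\,u_i\in\partial P$ must satisfy $0\in{\rm co}\{u_1,\dots,u_k\}$: otherwise a separating direction would give a translation of the center keeping the ball inside $P$ and leaving room to grow it, contradicting maximality. Each contact hyperplane supports $P$, so $P\subseteq R:=\bigcap_{i=1}^{k}\{x:\langle x,u_i\rangle\le r\}$. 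Because the width is monotone under inclusion (for a subset the extent in every fixed direction can only shrink, hence so does the minimum over directions), it suffices to prove the stated bound for ${\rm wid}(R)$.

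Next I would reduce the number of facets. Applying Carath\'eodory's Theorem~\ref{CaratheodoryTh} to the relation $0\in{\rm co}\{u_1,\dots,u_k\}$ lets me retain at most $n+1$ of the normals whose convex hull still contains the origin, so $R$ may be assumed to be cut out by at most $n+1$ facets, all tangent to $B^n_{r,0}$, with $0\in{\rm co}\{u_i\}$; its inscribed ball is then exactly $B^n_{r,0}$. Degenerate configurations, in which $0$ lies on the boundary of ${\rm co}\{u_i\}$ so that $R$ is an unbounded wedge or a slab, are easily seen to give a width no larger than the non-degenerate case (a slab, for instance, has width $2r\le 2\sqrt n\,r$), so the extremum occurs when the $n+1$ normals positively span $\mathbb{R}^n$ and $R$ is a genuine $n$-simplex.

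The heart of the argument, and the step I expect to be the main obstacle, is to show that among all $n$-simplices whose inscribed ball has radius $r$ the minimal width is largest for the regular simplex. I would write ${\rm wid}(R)=\min_{\|v\|=1}(h_R(v)+h_R(-v))$ and, using linear-programming duality to express the support function $h_R$ through the tangency data $\{(u_i,r)\}$, recast the problem as maximizing this minimal width over admissible normal systems $\{u_i\}$ subject to $0\in{\rm co}\{u_i\}$. A symmetrization and variational analysis should force the maximizer to be totally symmetric, namely the vertices of a regular simplex inscribed in the unit sphere; establishing this extremality (together with its uniqueness, which is what makes the inequality sharp) is the genuinely hard part, since the width functional is only piecewise smooth and its minimizing direction jumps as the configuration is deformed.

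Finally, for the regular simplex the bound becomes an explicit computation built from results already available. Taking $R$ to be a regular $n$-simplex with inscribed radius $r$, Lemma~\ref{lem:inr-reg} gives its edge length ${\rm diam}(R)=[2n(n+1)]^{1/2}\,r$, and Theorem~\ref{thm:reg-width} gives its width as $\sqrt{2/(n+1)}\,{\rm diam}(R)$ for odd $n$ and $\sqrt{2(n+1)}/\sqrt{n(n+2)}\,{\rm diam}(R)$ for even $n$. Substituting yields ${\rm wid}(R)=\sqrt{2/(n+1)}\cdot[2n(n+1)]^{1/2}\,r=2\sqrt n\,r$ for odd $n$, and ${\rm wid}(R)=[2(n+1)/\sqrt{n+2}]\,r$ for even $n$, which are exactly the two cases of~(\ref{eggl-ineq:Jungste}). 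The odd/even dichotomy simply records whether the minimal-width direction of the regular simplex pairs its $n+1$ vertices into two equal groups (odd) or two groups differing by one (even). Combining this with the inclusion ${\rm wid}(P)\le{\rm wid}(R)$ from the first step completes the proof.
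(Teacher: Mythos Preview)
The paper does not supply its own proof of Theorem~\ref{SteinhagenTh}; it is quoted as a classical result of Steinhagen~\cite{Steinhagen1922}, with additional references to Blaschke, Gericke, Santal\'{o}, and Betke--Henk given in the surrounding remark. So there is no internal argument to compare against, and any proof you present must stand on its own.

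Your proposal, however, does not stand on its own. The final step invokes Theorem~\ref{thm:reg-width} to evaluate the width of the regular simplex, but in this paper Theorem~\ref{thm:reg-width} is \emph{deduced from} Theorem~\ref{SteinhagenTh} (its proof reads: ``The proof follows from Lemma~\ref{lem:inr-reg} and Theorem~\ref{SteinhagenTh}''). Using it here is circular. To salvage the computation you would need an independent derivation of the width of the regular $n$-simplex (e.g.\ along the lines of~\cite{Alexander1977} or~\cite{Har-PeledR2023}), not the one in this paper.

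There is also a substantive gap earlier. The reduction via Carath\'eodory to an $(n{+}1)$-facet polyhedron circumscribing the inball is fine, but the claim that among such simplices the \emph{maximum} of the width is attained by the regular one is exactly the content of Steinhagen's theorem; saying that ``a symmetrization and variational analysis should force the maximizer to be totally symmetric'' is not a proof. The width is a piecewise-linear, non-smooth functional of the normal configuration, and the argument that the extremum is regular requires genuine work (this is what Steinhagen, Gericke, and later Eggleston~\cite[Th.~50]{Eggleston1958} actually carry out). As written, your proposal reduces Steinhagen's theorem to Steinhagen's theorem.
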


\begin{remark}
The above statement of the theorem
gives an upper bound for ${\rm wid}(P)$ in terms of $\rho_{\rm inr}^n$
and we use this statement in order its corresponding  inequality to be compatible with the form of the above
referred secoond inequality in (\ref{eggl-ineq:4}).
The original statement of the theorem refers to the lower bound for $\rho_{\rm inr}^n$ in terms
of ${\rm wid}(P)$ and has the following equivalent form (\emph{cf}.\ \cite[Th.~50]{Eggleston1958}).
\begin{theorem}[Steinhagen's theorem (1922)~\cite{Steinhagen1922}]\label{SteinhagenTh2}
Let $P$ be a bounded convex subset of\, $\mathbb{R}^n$ of width ${\rm wid}(P)$.
Then, the inradius $\rho_{\rm inr}^n$ satisfies:
\begin{equation*}\label{eq:steinh}
{\rho_{\rm inr}^n} \geqslant
\begin{cases}
\bigl(2 \sqrt{n}\bigr)^{-1}\, {{\rm wid}(P)}, & {\text{if}}\ {n}\,\ {\text{is odd}},\\[0.3cm]
\bigl[\sqrt{n + 2}/(2n + 2)\bigr]\, {{\rm wid}(P)}, & {\text{if}}\ {n}\,\ {\text{is even}}.
\end{cases}
\end{equation*}
\end{theorem}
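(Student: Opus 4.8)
The plan is to obtain Theorem~\ref{SteinhagenTh2} as an immediate algebraic consequence of Theorem~\ref{SteinhagenTh}, since the remark preceding it already asserts that the two statements are equivalent. The only content is to solve the upper bound on ${\rm wid}(P)$ for $\rho_{\rm inr}^n$, taking care that each coefficient involved is strictly positive so that the sense of the inequality is correctly reversed upon division to a $\geqslant$ statement.

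First I would treat the odd case. Theorem~\ref{SteinhagenTh} gives ${\rm wid}(P) \leqslant 2\sqrt{n}\,\rho_{\rm inr}^n$. Since $2\sqrt{n} > 0$, dividing both sides by $2\sqrt{n}$ preserves the inequality and yields $\rho_{\rm inr}^n \geqslant (2\sqrt{n})^{-1}\,{\rm wid}(P)$, which is exactly the odd-case bound claimed in the statement.

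Next I would treat the even case. Here Theorem~\ref{SteinhagenTh} gives ${\rm wid}(P) \leqslant \bigl[2(n+1)/\sqrt{n+2}\,\bigr]\,\rho_{\rm inr}^n$. The coefficient $2(n+1)/\sqrt{n+2}$ is strictly positive, so dividing through by it gives $\rho_{\rm inr}^n \geqslant \bigl[\sqrt{n+2}/(2(n+1))\bigr]\,{\rm wid}(P)$. Rewriting $2(n+1)=2n+2$ puts this in the stated form $\bigl[\sqrt{n+2}/(2n+2)\bigr]\,{\rm wid}(P)$, which completes the even case.

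There is essentially no obstacle here: the result is a reciprocal rearrangement of the two bounds in Theorem~\ref{SteinhagenTh}, and the only point requiring any attention — trivial in both cases — is confirming the positivity of the dividing constants $2\sqrt{n}$ and $2(n+1)/\sqrt{n+2}$ so that the inequality direction flips to $\geqslant$. No geometric input beyond Theorem~\ref{SteinhagenTh} is needed, and the proof is a one-line invocation of that theorem followed by the elementary manipulation above.
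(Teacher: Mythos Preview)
Your proposal is correct and matches the paper's treatment: the paper does not give a separate proof of Theorem~\ref{SteinhagenTh2} but simply presents it as the equivalent ``original'' reformulation of Theorem~\ref{SteinhagenTh}, and you have supplied the trivial algebra making that equivalence explicit. One small wording quibble: dividing by a positive constant \emph{preserves} the inequality direction rather than ``reversing'' or ``flipping'' it---what changes is merely which quantity appears on the left---but your actual computations are fine.
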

\end{remark}

\begin{remark}
The Steinhagen theorem has been proved  in 1914 for $n = 2$ by Blaschke~\cite{Blaschke1914}.
An additional proof of Steinhagen's theorem has been given in 1936 by Gericke~\cite{Gericke1936}.
Furthermore, in 1946 Santal\'{o}~\cite{Santalo1946}
gave a \emph{generalization of Jung's and Steinhagen's theorems} to convex regions on the \emph{$n$-dimensional spherical surface}.
A geometrical proof of the above results for $n = 2$  has been given in 1944, also by Santal\'{o}~\cite{Santalo1944}.
It is worth noting that, Santal\'{o}'s result about the generalization of the Steinhagen's theorem, also generalizes
the well-known \emph{Robinson's theorem} that it has been proved by Robinson in 1938~\cite{Robinson1938}.
In 1992 Henk~\cite{Henk1992} by studying the relation between circumradius and diameter of a convex
body which has been proposed by Jung's theorem, gave
a natural \emph{generalization of Jung's theorem}.
Also, in 1993 Betke and Henk~\cite{BetkeH1993} by analyzing the relation between inradius and
width of a convex set that has been established by Steinhagen's theorem proposed respectively
a natural \emph{generalization of Steinhagen's theorem}.
\end{remark}

\begin{remark}
The circumradius, inradius and width of the regular $n$-simplex $\sigma^n$ in ${\mathbb{R}}^n$
with diameter 1 are given as follows.
The circumradius $\rho_{\rm cir}^n $ of $\sigma^n$, which in the case of regular simplices is
the same with the barycentric circumradius $\beta_{\rm cir}^n$
is $\rho_{\rm cir}^n \equiv  \beta_{\rm cir}^n  = \sqrt{n/(2n+2)}$
(\emph{cf}.\ Lemma~\ref{lem:circ-reg} and  Jung's Theorem \ref{JungTh}).
Similarly,
the inradius $\rho_{\rm inr}^n$ of $\sigma^n$ which is the same with
the barycentric inradius $\beta_{\rm inr}^n$ is given by
$\rho_{\rm inr}^n  \equiv \beta_{\rm inr}^n  = 1 / \sqrt{2 n (n + 1)}$
(\emph{cf}.\ Lemma \ref{lem:inr-reg}).
Also,
the width of $\sigma^n$ is
${\rm wid}(\sigma^n) \approx \sqrt{2 / n}$
and specifically ${\rm wid}(\sigma^n) = \sqrt{2 / (n +1)}$ if $n$ is odd and
${\rm wid}(\sigma^n) = \sqrt{2 (n + 1)}/\sqrt{n (n + 2)}$ if~$n$ is even
(\emph{cf}.\ \cite{Alexander1977,GritzmannK1992,Har-PeledR2023}).
\end{remark}

Based on Steinhagen's theorem we give a proof
for the above referred exact bound on the width
of the regular simplex in ${\mathbb{R}}^n$ with diameter 1.

\begin{theorem}[Width of the regular simplex]\label{thm:reg-width}
Let $\sigma^n$ be a regular $n$-simplex in ${\mathbb{R}}^n$ with diameter\/ ${\rm diam}(\sigma^n)=1$,
then the width ${\rm wid}(\sigma^n)$ of $\sigma^n$ is given by:
\begin{equation}\label{eggl-ineq:Jungste-reg}
{{\rm wid}(\sigma^n)} =
\begin{cases}
{\sqrt{2 / (n +1)},} &{\text{if}}\ {n}\,\ {\text{is odd}},\\[0.3cm]
{\sqrt{2 (n + 1)}/\sqrt{n (n + 2)},} &{\text{if}}\ {n}\,\ {\text{is even}}.
\end{cases}
\end{equation}
\end{theorem}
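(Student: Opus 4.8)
The plan is to turn the width into a concrete optimization over unit directions, solve that optimization exactly, and use Steinhagen's Theorem~\ref{SteinhagenTh} together with the inradius of Lemma~\ref{lem:inr-reg} only as an independent certificate of the upper bound.

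First I would coordinatize. Realize the regular $n$-simplex as $\upsilon^i=\tfrac{1}{\sqrt2}e_i$, $i=0,1,\ldots,n$, where $e_0,\ldots,e_n$ is the standard basis of $\mathbb{R}^{n+1}$; then $\|\upsilon^i-\upsilon^j\|=1$ for all $i\neq j$, so ${\rm diam}(\sigma^n)=1$, and $\sigma^n$ spans the hyperplane $\{x:\sum_k x_k=\tfrac{1}{\sqrt2}\}$. Since breadths are translation invariant and $\sigma^n$ is $n$-dimensional, only directions inside its affine hull are relevant, i.e.\ unit vectors of the linear hyperplane $H=\{u\in\mathbb{R}^{n+1}:\sum_k u_k=0\}$. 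As $\langle\upsilon^i,u\rangle=\tfrac{1}{\sqrt2}u_i$, the breadth in direction $u$ equals $\tfrac{1}{\sqrt2}\bigl(\max_i u_i-\min_i u_i\bigr)$, and hence
\[
{\rm wid}(\sigma^n)=\frac{1}{\sqrt2}\,\min_{\substack{u\in H\\ \|u\|=1}}\Bigl(\max_{0\le i\le n}u_i-\min_{0\le i\le n}u_i\Bigr).
\]

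The substantive step is the lower bound on this range. For fixed $R>0$ put $F_R=\{u\in H:\max_i u_i-\min_i u_i\le R\}$. Because $u\mapsto\max_i u_i-\min_i u_i$ is convex, $F_R$ is a bounded polytope, and the convex map $u\mapsto\|u\|^2$ attains its maximum over $F_R$ at a vertex. A short combinatorial argument shows every vertex is \emph{two-level}: an active constraint $u_i-u_j=R$ cannot be chained with another (two would force a difference $2R$), so the coordinates split into a set of size $p$ at a common value $\beta$ and a set of size $q=n+1-p$ at $\alpha=\beta-R$, and $\sum_i u_i=0$ fixes $\beta=qR/(n+1)$, $\alpha=-pR/(n+1)$. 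Such a vertex has $\|u\|^2=p\beta^2+q\alpha^2=pq\,R^2/(n+1)$. Therefore any unit $u\in H$ of range at most $R$ obeys $1\le R^2\max_{1\le p\le n}p(n+1-p)/(n+1)$, that is
\[
\max_{0\le i\le n}u_i-\min_{0\le i\le n}u_i\ \ge\ \Bigl(\frac{n+1}{\max_{1\le p\le n}p(n+1-p)}\Bigr)^{1/2}.
\]

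It then remains to evaluate $\max_p p(n+1-p)$ and to confirm attainment. The maximum is reached at the integers nearest $(n+1)/2$, giving $(n+1)^2/4$ for $n$ odd and $n(n+2)/4$ for $n$ even; inserting these and multiplying by $1/\sqrt2$ reproduces exactly the two cases of~(\ref{eggl-ineq:Jungste-reg}), while the associated two-level direction (a balanced split for $n$ odd, a $\tfrac{n}{2}$ versus $\tfrac{n+2}{2}$ split for $n$ even) realizes that range, so the bound is an equality. As an independent check, feeding the inradius $\rho_{\rm inr}^n=\beta_{\rm inr}^n=[2n(n+1)]^{-1/2}$ of Lemma~\ref{lem:inr-reg} into Steinhagen's Theorem~\ref{SteinhagenTh} and simplifying $2\sqrt{n}\,\rho_{\rm inr}^n$ (for $n$ odd) and $[2(n+1)/\sqrt{n+2}]\,\rho_{\rm inr}^n$ (for $n$ even) yields the same right-hand sides, certifying ${\rm wid}(\sigma^n)$ from above and showing Steinhagen's estimate is tight here. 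The main obstacle is the even case: the perfectly balanced split is then impossible, so the symmetric optimizer $\beta=-\alpha$ of the naive estimate $R\ge 2\sqrt{-\alpha\beta}\ge 2/\sqrt{n+1}$ is inadmissible, and one must pass through the vertex/parity analysis above to obtain the strictly larger value $\sqrt{2(n+1)}/\sqrt{n(n+2)}$ in place of $\sqrt{2/(n+1)}$.
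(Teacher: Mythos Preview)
Your proof is correct and takes a genuinely different route from the paper. The paper's argument is essentially a two-line appeal to external results: it plugs the inradius $\beta_{\rm inr}^n=[2n(n+1)]^{-1/2}$ of Lemma~\ref{lem:inr-reg} into Steinhagen's inequality (Theorem~\ref{SteinhagenTh}) to get the upper bound, and then asserts equality ``since $\sigma^n$ is regular,'' citing Alexander~\cite{Alexander1977}. In particular, the paper never establishes the lower bound on its own; it relies on the (true but unproved-here) fact that Steinhagen's bound is saturated by the regular simplex. Your approach is self-contained in exactly this respect: by coordinatizing as $\upsilon^i=\tfrac{1}{\sqrt2}e_i$ and reducing the width to $\tfrac{1}{\sqrt2}\min_{u\in H,\|u\|=1}(\max_i u_i-\min_i u_i)$, and then maximizing $\|u\|^2$ over the range-constrained polytope $F_R$ via its two-level vertices, you obtain both the sharp lower bound and an explicit minimizing direction, with the parity of $n$ emerging naturally from the integer maximization of $p(n+1-p)$. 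What the paper's approach buys is brevity, at the cost of importing Steinhagen's tightness as a black box; what your approach buys is a complete elementary argument that in fact \emph{proves} that tightness for the regular simplex, and makes the odd/even dichotomy transparent. One small stylistic remark: your vertex-counting justification (``cannot be chained'') is terse; spelling out that the active constraints yield at most $|T|+|B|-1$ independent relations, so a vertex in the $n$-dimensional hyperplane $H$ forces $|T|+|B|=n+1$ and hence no ``middle'' coordinates, would make that step airtight for a reader.
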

\begin{proof}
The proof follows from Lemma~\ref{lem:inr-reg} and Theorem~\ref{SteinhagenTh}.
The equality holds since $\sigma^n$ is regular (\emph{cf}.\ \cite{Alexander1977} and Remark~\ref{rem:reg-prop}).
\end{proof}

\subsection{Simplex bisection}\label{subsec:SimBis}

\begin{remark}
An $m$-simplex of $\mathbb{R}^n$ is bisected by choosing its longest edge, dividing it in two equal parts and defining two new $m$-simplices.
Each of them will, in turn, be bisected and this process continues for a finite number of iterations.
\end{remark}

\begin{definition}[Simplex bisection~\cite{Kearfott1978,RosenbergS1975,Stenger1975}]
Let
${\sigma}^m_{\langle{0}\rangle} =
[{\upsilon }^0,{\upsilon }^1,\ldots,
{\upsilon }^m ]$
be an $m$-simplex in $\mathbb{R}^n$ $(n \geqslant m)$.
Assume that $[ {\upsilon }^i,{\upsilon }^j ]$
is the longest edge of ${\sigma}^m_{\langle{0}\rangle}$ and
let $M = ( {\upsilon }^i+{\upsilon }^j ) / 2 $
be its midpoint.
Then the {\it bisection\/} of ${\sigma}^m_{\langle{0}\rangle}$ is the order
pair of $m$-simplices
$\bigl\{{{\sigma}^m_{\langle{10}\rangle},\, {\sigma}^m_{\langle{11}\rangle} } \bigr\}$
where:
\begin{align*}
&{\sigma}^m_{\langle{10}\rangle} = [ {\upsilon }^0, {\upsilon }^1, \ldots,
  {\upsilon }^{i-1}, M,{\upsilon }^{i+1}, \ldots,
  {\upsilon }^j, \ldots, {\upsilon }^m ],\\[0.05cm]
&{\sigma}^m_{\langle{11}\rangle} = [ {\upsilon }^0, {\upsilon }^1, \ldots,
  {\upsilon }^{i}, \ldots, {\upsilon }^{j-1}, M,{\upsilon }^{j+1},
  \ldots,{\upsilon }^m ].
\end{align*}
The $m$-simplices ${\sigma}^m_{\langle{10}\rangle}$ and ${\sigma}^m_{\langle{11}\rangle}$
are called \emph{lower simplex}\/ and
\emph{upper simplex}\/ respectively corresponding to
${\sigma}^m_{\langle{0}\rangle}$, while both ${\sigma}^m_{\langle{10}\rangle}$ and ${\sigma}^m_{\langle{11}\rangle}$
are called \emph{elements of the bisection}\/ of ${\sigma}^m_{\langle{0}\rangle}$.
Next, each of the elements of the bisection will, in turn, be bisected and the process will be continued to get a sequence of sets of simplices.
In case where one of the elements is selected then this
element is called \emph{selected $m$-simplex produced after one
 bisection of}\/ ${\sigma}^m_{\langle{0}\rangle}$ and it is denoted by
${\sigma }^m_{\langle{1}\rangle}$.
Assume now that the bisection is applied
with ${\sigma}^m_{\langle{1}\rangle}$ replacing ${\sigma}^m_{\langle{0}\rangle}$
giving thus the ${\sigma}^m_{\langle{2}\rangle}$.
Suppose further that this process continues for $p$
iterations. Then the ${\sigma}^m_{\langle{p}\rangle}$ is called
the \emph{selected $m$-simplex produced after $p$
iterations of the bisection of ${\sigma}^m_{\langle{0}\rangle}$}.
\end{definition}

\begin{theorem}[Kearfott's theorem (1978)~\cite{Kearfott1978}]\label{Th-Kearfott1978}
Let
${\sigma}^m_{\langle{0}\rangle} =
[ {\upsilon }^0,{\upsilon }^1,\ldots, {\upsilon }^m ]$
be an $m$-simplex in $\mathbb{R}^n$ $(n \geqslant m)$
and  let ${\sigma}^m_{\langle{p}\rangle}$ be any $m$-simplex produced after $p$
 bisections of  ${\sigma}^m_{\langle{0}\rangle}$. Then it holds that:
\begin{equation}
 {\rm diam}\bigl({\sigma}^m_{\langle{p}\rangle}\bigr) \leqslant
\bigl(\sqrt{3} / 2 \bigr)^{\lfloor p / m \rfloor } {\rm diam}\bigl({\sigma}^m_{\langle{0}\rangle}\bigr),
\end{equation}
where ${\rm diam}\bigl({\sigma}^m_{\langle{p}\rangle}\bigr)$ and ${\rm diam}\bigl({\sigma}^m_{\langle{0}\rangle}\bigr)$ are the diameters
of ${\sigma}^m_{\langle{p}\rangle}$ and ${\sigma}^m_{\langle{0}\rangle}$  respectively and
$\lfloor p / m \rfloor$ is the largest integer less than or equal to $p/m$.
\end{theorem}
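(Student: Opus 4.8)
The plan is to reduce the statement to two ingredients: a one–step geometric estimate, furnished directly by Apollonius' theorem, showing that every edge a bisection creates is short; and a combinatorial count showing that the original vertices are exhausted after $m$ bisections. First I would record the geometric estimate. Suppose a simplex $\sigma$ of diameter $\delta={\rm diam}(\sigma)$ is bisected at the midpoint $M$ of its longest edge $[\upsilon^a,\upsilon^b]$. For any other vertex $w$ of $\sigma$, the segment $[M,w]$ is precisely the median from $w$ to the side $[\upsilon^a,\upsilon^b]$ in the $2$-simplex $[\upsilon^a,\upsilon^b,w]$, so Apollonius' theorem (Theorem~\ref{ApolloniusTh1}) gives $\|M-w\|^2=\tfrac12(\|w-\upsilon^a\|^2+\|w-\upsilon^b\|^2)-\tfrac14\|\upsilon^a-\upsilon^b\|^2$. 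Since $[\upsilon^a,\upsilon^b]$ is a longest edge we have $\|w-\upsilon^a\|,\|w-\upsilon^b\|\leqslant\delta$, hence $\|M-w\|\leqslant(\sqrt3/2)\,\delta$, while the two half-edges have length $\delta/2$. Every edge of each child simplex is therefore either inherited from $\sigma$ (length $\leqslant\delta$) or incident to $M$ (length $\leqslant(\sqrt3/2)\delta$), so a single bisection does not increase the diameter. Because diameters are thus non-increasing along the chain and an edge keeps its length once created, every edge incident to any midpoint ever introduced has length at most $(\sqrt3/2)\,{\rm diam}(\sigma^m_{\langle 0\rangle})$.

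Next I would fix the node $\sigma^m_{\langle p\rangle}$, follow the unique chain $\sigma^m_{\langle 0\rangle}\supset\sigma^m_{\langle 1\rangle}\supset\cdots\supset\sigma^m_{\langle p\rangle}$ of bisections producing it, and prove the core claim ${\rm diam}(\sigma^m_{\langle m\rangle})\leqslant(\sqrt3/2)\,{\rm diam}(\sigma^m_{\langle 0\rangle})$. Call an edge \emph{long} if its length exceeds $(\sqrt3/2)\,{\rm diam}(\sigma^m_{\langle 0\rangle})$. By the previous paragraph a long edge cannot be incident to any midpoint, so both of its endpoints are original vertices of $\sigma^m_{\langle 0\rangle}$; since any two original vertices span an edge of $\sigma^m_{\langle 0\rangle}$, a long edge is a surviving original edge. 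Now suppose, for contradiction, that ${\rm diam}(\sigma^m_{\langle m\rangle})>(\sqrt3/2)\,{\rm diam}(\sigma^m_{\langle 0\rangle})$. By monotonicity of the diameter, each $\sigma^m_{\langle k\rangle}$ with $k\leqslant m$ then has a long, hence original, edge as its longest edge, so every one of the $m$ bisections in the chain splits an original edge and consequently retires one of its two original endpoints.

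A retired vertex never reappears in the nested simplices, so the $m$ retired vertices are distinct; as $\sigma^m_{\langle 0\rangle}$ has only $m+1$ original vertices, $\sigma^m_{\langle m\rangle}$ retains at most one of them and therefore contains no original edge — contradicting the existence of a long edge in it. This establishes the core claim. Finally I would iterate: writing $p=m\lfloor p/m\rfloor+r$ with $0\leqslant r<m$, I apply the core claim to each successive block of $m$ bisections and use that the remaining $r$ bisections do not increase the diameter, obtaining ${\rm diam}(\sigma^m_{\langle p\rangle})\leqslant(\sqrt3/2)^{\lfloor p/m\rfloor}\,{\rm diam}(\sigma^m_{\langle 0\rangle})$.

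The geometric estimate is routine and follows immediately from Apollonius' theorem. The hard part is the combinatorial heart of the core claim: one must notice that bisecting the longest edge deletes an entire original vertex together with every long edge meeting it, and that long edges are forced to be original, so that exhausting the $m+1$ original vertices in $m$ steps is exactly the mechanism that forces the $\sqrt3/2$ contraction. Care is also needed to confirm that the longest edge selected at each step is genuinely original (not incident to a midpoint), which is where the non-increasing diameter and the short-edge bound are used in tandem.
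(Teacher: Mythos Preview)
The paper does not supply its own proof of this theorem: it is stated with the attribution ``Kearfott's theorem (1978)~\cite{Kearfott1978}'' and then used as a black box in the subsequent Lemmata~\ref{Lem-Vrahatis-Bis1986}--\ref{Lem-Vrahatis-Bis31986}. There is therefore nothing in the paper to compare your argument against.

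That said, your proposal is sound and is in fact the standard route to the result. The one-step estimate via the median-length formula is exactly right (and, pleasantly, ties the external theorem back to the Apollonius theme of the paper). The combinatorial core is also correct: the crucial observations are (i) diameters are non-increasing along the chain, (ii) any edge incident to a midpoint created at a step with current diameter $\delta$ has length at most $(\sqrt3/2)\delta\leqslant(\sqrt3/2)\,{\rm diam}(\sigma^m_{\langle 0\rangle})$, so a ``long'' edge must join two original vertices, and (iii) each of the first $m$ bisections, if it cuts a long edge, discards a distinct original vertex, leaving at most one original vertex --- hence no original edge --- in $\sigma^m_{\langle m\rangle}$. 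One small point of phrasing: you write ``each $\sigma^m_{\langle k\rangle}$ with $k\leqslant m$ then has a long \ldots\ edge as its longest edge, so every one of the $m$ bisections \ldots''; strictly you only need (and only use) this for $k=0,\ldots,m-1$ to conclude that each of those $m$ bisections cuts an original edge, and separately for $k=m$ to obtain the contradiction. The logic is fine, but separating the two roles would make the write-up cleaner.
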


\begin{lemma}\label{Lem-Vrahatis-Bis1986}
Assume that ${\sigma}^m_{\langle{0}\rangle}$, ${\sigma}^m_{\langle{p}\rangle}$,
${\rm diam}\bigl({\sigma}^m_{\langle{0}\rangle}\bigr)$ and\/ ${\rm diam}\bigl({\sigma}^m_{\langle{p}\rangle}\bigr)$ are
as in Theorem~\ref{Th-Kearfott1978}. Let $\kappa^m_{\langle{p}\rangle}$
and $\beta_{\rm cir}^m = \beta_{\rm cir}^m\bigl({\sigma}^m_{\langle{p}\rangle}\bigr)$ be the
barycenter and the barycentric circumradius of~${\sigma}^m_{\langle{p}\rangle}$ respectively. Then for any point
$x$ in ${\sigma}^m_{\langle{p}\rangle}$ it holds that:
\begin{equation}
 \| x - \kappa^m_{\langle{p}\rangle} \| \leqslant \frac{m}{m+1}
\bigl(\sqrt{3} / 2 \bigr)^{\lfloor p / m \rfloor } {\rm diam}\bigl({\sigma}^m_{\langle{0}\rangle}\bigr).
\end{equation}
\end{lemma}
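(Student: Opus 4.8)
The plan is to chain together three ingredients: a convexity argument reducing the supremum over the simplex to its vertices, an elementary bound of the barycentric circumradius by the diameter, and Kearfott's diameter estimate.

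First I would reduce the problem to the vertices. For the fixed point $\kappa^m_{\langle{p}\rangle}$ the function $x \mapsto \| x - \kappa^m_{\langle{p}\rangle}\|$ is convex on $\mathbb{R}^n$, and a convex function attains its maximum over a compact convex polytope at an extreme point. Since the extreme points of ${\sigma}^m_{\langle{p}\rangle}$ are exactly its vertices, writing ${\upsilon}^0,\ldots,{\upsilon}^m$ for the vertices of ${\sigma}^m_{\langle{p}\rangle}$ gives
\[
\max_{x \in {\sigma}^m_{\langle{p}\rangle}} \bigl\| x - \kappa^m_{\langle{p}\rangle}\bigr\|
= \max_{0 \leqslant i \leqslant m} \bigl\| {\upsilon}^i - \kappa^m_{\langle{p}\rangle}\bigr\|
= \beta_{\rm cir}^m,
\]
where the last equality is precisely the characterization of the barycentric circumradius coming from Lemma~\ref{lem:circ} and Theorem~\ref{thm:simcov}. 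Thus every point $x$ of ${\sigma}^m_{\langle{p}\rangle}$ satisfies $\|x - \kappa^m_{\langle{p}\rangle}\| \leqslant \beta_{\rm cir}^m$.

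Next I would bound $\beta_{\rm cir}^m$ by the diameter. Setting $D = {\rm diam}\bigl({\sigma}^m_{\langle{p}\rangle}\bigr)$, every edge length of ${\sigma}^m_{\langle{p}\rangle}$ is at most $D$, so in the expression for $\|\kappa^m_{\langle{p}\rangle} - {\upsilon}^i\|$ furnished by Lemma~\ref{lem:circ} the first sum (over the $m$ edges meeting ${\upsilon}^i$) is at most $m\cdot m\, D^2 = m^2 D^2$, while the subtracted second sum is a sum of squares and hence nonnegative. Therefore
\[
\beta_{\rm cir}^m \leqslant \frac{1}{m+1}\bigl(m^2 D^2\bigr)^{1/2} = \frac{m}{m+1}\, {\rm diam}\bigl({\sigma}^m_{\langle{p}\rangle}\bigr).
\]
Finally, Kearfott's Theorem~\ref{Th-Kearfott1978} gives ${\rm diam}\bigl({\sigma}^m_{\langle{p}\rangle}\bigr) \leqslant \bigl(\sqrt{3}/2\bigr)^{\lfloor p/m\rfloor}{\rm diam}\bigl({\sigma}^m_{\langle{0}\rangle}\bigr)$, and combining the three displayed inequalities yields the asserted bound.

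The only nontrivial step is the first one: justifying that the farthest point of the simplex from its barycenter is a vertex, so that the supremum equals $\beta_{\rm cir}^m$. Once this convexity (extreme-point) observation is in place, the remaining two steps are an elementary estimate and a direct appeal to the previously established diameter contraction; I expect no further obstacles.
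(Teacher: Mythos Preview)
Your argument is correct and is essentially the paper's own proof: the paper likewise chains $\|x-\kappa^m_{\langle p\rangle}\|\leqslant \beta_{\rm cir}^m \leqslant \frac{1}{m+1}\bigl(m^2\,{\rm diam}({\sigma}^m_{\langle p\rangle})^2\bigr)^{1/2}$ (citing Theorem~\ref{thm:simcov}) and then applies Kearfott's Theorem~\ref{Th-Kearfott1978}. The only cosmetic difference is that you spell out the convexity/extreme-point justification for the first inequality, whereas the paper absorbs it into the statement that the sphere of radius $\beta_{\rm cir}^m$ about the barycenter encloses the simplex.
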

\begin{proof}
By Theorem~\ref{thm:simcov} we obtain that
\[
 \| x - \kappa^m_{\langle{p}\rangle} \| \leqslant \beta_{\rm cir}^m\bigl({\sigma}^m_{\langle{p}\rangle}\bigr) \leqslant
 \frac{1}{m + 1} \Bigl( m^2 \, {\rm diam}\bigl({\sigma}^m_{\langle{p}\rangle}\bigr)^2\Bigr)^{1/2},
\]
and the result follows by Theorem~\ref{Th-Kearfott1978}.
\end{proof}

\begin{lemma}\label{Lem-Vrahatis-Bis21986}
Assume that ${\sigma}^m_{\langle{0}\rangle}$, ${\sigma}^m_{\langle{p}\rangle}$,
 ${\rm diam}\bigl({\sigma}^m_{\langle{0}\rangle}\bigr)$ and\/
 ${\rm diam}\bigl({\sigma}^m_{\langle{p}\rangle}\bigr)$ are
as in Theorem~\ref{Th-Kearfott1978}.
Assume further that ${\rm shor}\bigl({\sigma}^m_{\langle{p}\rangle}\bigr)$ is the length of the shortest edge of ${\sigma}^m_{\langle{p}\rangle}$
and let
$\kappa^m_{\langle{p}\rangle}$
and $\beta_{\rm cir}^m = \beta_{\rm cir}^m\bigl({\sigma}^m_{\langle{p}\rangle}\bigr)$ be the
barycenter and the barycentric circumradius of~${\sigma}^m_{\langle{p}\rangle}$ respectively. Then for any point
$x$ in ${\sigma}^m_{\langle{p}\rangle}$ it holds that:
\begin{equation}
 \| x - \kappa^m_{\langle{p}\rangle} \| \leqslant \frac{m}{m+1}
 \Bigl(
 {\rm diam}\bigl({\sigma}^m_{\langle{p}\rangle}\bigr)^2 -
 \frac{m-1}{2m}\, {\rm shor}\bigl({\sigma}^m_{\langle{p}\rangle}\bigr)^2
 \Bigr)^{1/2}.
\end{equation}
\end{lemma}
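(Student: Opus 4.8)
The plan is to deduce the bound directly from the barycentric enclosing Theorem~\ref{thm:simcov}, reducing the claim to an upper estimate of the barycentric circumradius $\beta_{\rm cir}^m\bigl(\sigma^m_{\langle p\rangle}\bigr)$ and then replacing every edge length occurring in the explicit formula~(\ref{eq:3.3}) by either ${\rm diam}\bigl(\sigma^m_{\langle p\rangle}\bigr)$ or ${\rm shor}\bigl(\sigma^m_{\langle p\rangle}\bigr)$. First I would note that, by Theorem~\ref{thm:simcov}, the closed ball centered at the barycenter $\kappa^m_{\langle p\rangle}$ with radius $\beta_{\rm cir}^m$ encloses $\sigma^m_{\langle p\rangle}$; being convex and containing all the vertices, this ball contains their convex hull and hence the whole simplex, so every point $x$ of $\sigma^m_{\langle p\rangle}$ satisfies $\|x-\kappa^m_{\langle p\rangle}\|\leqslant\beta_{\rm cir}^m$. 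It therefore suffices to bound $\beta_{\rm cir}^m$ from above by the right-hand side of the asserted inequality.

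Next I would estimate, for each fixed index $i$, the bracketed quantity maximized in~(\ref{eq:3.3}). The first sum runs over the $m$ edges incident to the vertex ${\upsilon}^i$, each of length at most ${\rm diam}\bigl(\sigma^m_{\langle p\rangle}\bigr)$, so after multiplication by $m$ it is at most $m^2\,{\rm diam}\bigl(\sigma^m_{\langle p\rangle}\bigr)^2$; the subtracted double sum runs over the $\binom{m}{2}$ edges of the opposite $(m-1)$-face $\sigma^m_{\neg i}$, each of length at least ${\rm shor}\bigl(\sigma^m_{\langle p\rangle}\bigr)$, hence it is bounded below by $\binom{m}{2}{\rm shor}\bigl(\sigma^m_{\langle p\rangle}\bigr)^2=\tfrac{m(m-1)}{2}{\rm shor}\bigl(\sigma^m_{\langle p\rangle}\bigr)^2$. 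Since these two estimates point in the directions that make the difference largest, the bracketed quantity is at most $m^2\,{\rm diam}\bigl(\sigma^m_{\langle p\rangle}\bigr)^2-\tfrac{m(m-1)}{2}{\rm shor}\bigl(\sigma^m_{\langle p\rangle}\bigr)^2$ for every $i$, and hence so is its maximum over $i$.

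Finally, substituting this into~(\ref{eq:3.3}) and factoring $m^2$ out of the square root yields $\beta_{\rm cir}^m\leqslant\frac{1}{m+1}\bigl(m^2\,{\rm diam}^2-\tfrac{m(m-1)}{2}{\rm shor}^2\bigr)^{1/2}=\frac{m}{m+1}\bigl({\rm diam}^2-\tfrac{m-1}{2m}{\rm shor}^2\bigr)^{1/2}$, which, combined with the first step, gives exactly the claimed bound on $\|x-\kappa^m_{\langle p\rangle}\|$. This is not so much an obstacle as a point demanding care: the argument hinges on the correct edge bookkeeping --- that the first sum contains precisely $m$ incident edges and the second precisely $\binom{m}{2}$ face edges, the same counts used in Lemma~\ref{lem:circ-reg} --- and on verifying that the upper and lower estimates combine so that the whole expression is genuinely bounded above and the nonnegativity under the square root is preserved.
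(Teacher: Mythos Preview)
Your proposal is correct and follows essentially the same route as the paper: invoke Theorem~\ref{thm:simcov} to reduce to bounding $\beta_{\rm cir}^m$, then in the bracketed expression of~(\ref{eq:3.3}) bound the $m$ incident-edge terms above by ${\rm diam}^2$ and the $\binom{m}{2}$ opposite-face edge terms below by ${\rm shor}^2$, and finally factor $m^2$ out of the square root. The paper's proof is terser but identical in substance; your added remarks on the edge counts and on nonnegativity under the radical are welcome clarifications.
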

\begin{proof}
By Theorem~\ref{thm:simcov} we obtain that
\begin{align*}
\| x - \kappa^m_{\langle{p}\rangle} \|
  & \leqslant \beta_{\rm cir}^m \bigl({\sigma}^m_{\langle{p}\rangle}\bigr)
  \leqslant
 \frac{1}{m + 1} \Bigl( m^2 \, {\rm diam}\bigl({\sigma}^m_{\langle{p}\rangle}\bigr)^2-
\binom{m}{2}\, {\rm shor}\bigl({\sigma}^m_{\langle{p}\rangle}\bigr)^2
 \Bigr)^{1/2}\\[0.2cm]
  & \leqslant
 \frac{1}{m + 1} \Bigl( m^2 \, {\rm diam}\bigl({\sigma}^m_{\langle{p}\rangle}\bigr)^2-
\frac{m(m-1)}{2}\, {\rm shor}\bigl({\sigma}^m_{\langle{p}\rangle}\bigr)^2
 \Bigr)^{1/2}.
\end{align*}
Thus the lemma is proved.
\end{proof}

\begin{remark}
Simplex bisection methods are used for the computation of
solutions of systems of nonlinear algebraic and/or transcendental equations of the form:
\begin{equation} \label{fequ}
F_{n}(x)=\mathit{\Theta^n},\,\, \mbox{ with }\,\,
 F_{n}=(f_1, f_2, \ldots , f_n): {\mathcal D} \subset \mathbb{R}^n \to \mathbb{R}^n,
\end{equation}
where $\mathit{\Theta^n}= ( 0, 0, \ldots , 0 )$ is the origin of $\mathbb{R}^n$ and ${\mathcal D}$ is a bounded domain.
Specifically, an $n$-simplex of $\mathbb{R}^n$ can be bisected by choosing its longest edge,
dividing it in two equal parts and considering two new $n$-simplices.
Each of them or one of them is, in turn, bisected. This procedure continues for a finite number
of iterations for obtaining an approximation $s^*$ of a solution~$s$ of Eq.~(\ref{fequ}).
For the selection of an element of the bisection process in each iteration
a criterion is applied for the existence of a solution of Eq.~(\ref{fequ}) in this element
(see, \emph{e.g.}~\cite{Heindl2016,Mayer2002,MilgromM2018,Stenger1975,%
Vrahatis1986,Vrahatis1988b,Vrahatis1989,Vrahatis1995,Vrahatis2000,Vrahatis2016,Vrahatis2020,Vrahatis2022}).
In general, the bisection methods require only the algebraic signs of the function values
and are of major importance for solving problems with imprecise information.
Notice that the solution of Eq.~(\ref{fequ}) is equivalent to the problem of
computing fixed points of continuous functions in several variables
as well as periodic orbits of nonlinear mappings and similarly, fixed
points of the Poincar\'{e} map on a surface of section (see, \emph{e.g.}~\cite{Vrahatis1995,Vrahatis2022}).
\end{remark}

\begin{definition}
Assume that $\sigma^n $ is an $n$-simplex in $\mathbb{R}^n$
and let ${\rm diam}(\sigma^n)$ and ${\rm shor}(\sigma^n)$ be the diameter and
the length of the shortest edge of $\sigma^n$ respectively.
Assume further that~$s$ is a solution of Eq.~(\ref{fequ}) in $\sigma^n$.
Then the barycenter $\kappa^n$ of $\sigma^n$ is defined
to be an \emph{approximation}\/ of~$s$ and the quantity:
\begin{equation}\label{eq:errorest}
 \varepsilon^n = \frac{n}{n+1} \Bigl(
 {\rm diam}(\sigma^n)^2 -
 \frac{n-1}{2n}\, {\rm shor}(\sigma^n)^2
 \Bigr)^{1/2},
\end{equation}
to be an \emph{error estimate}\/ for $\kappa^n$.
\end{definition}

\begin{lemma}\label{Lem-Vrahatis-Bis31986}
Let ${\sigma}^n_{\langle{p}\rangle}$ be the selected $n$-simplex
produced after $p$ bisections of an $n$-simplex
  ${\sigma}^n_{\langle{0}\rangle}$ in $\mathbb{R}^n$ with diameter
   ${\rm diam}\bigl({\sigma}^m_{\langle{0}\rangle}\bigr)$.
Let ${\rm shor}\bigl({\sigma}^m_{\langle{p}\rangle}\bigr)$ be the length of the shortest edge of ${\sigma}^m_{\langle{p}\rangle}$ and
${\rm diam}\bigl({\sigma}^m_{\langle{p}\rangle}\bigr)$ its diameter.
Then
\[
\varepsilon^n_{\langle{p}\rangle} \leqslant\frac{n}{n+1}
\bigl(\sqrt{3} / 2 \bigr)^{\lfloor p / n \rfloor } {\rm diam}\bigl({\sigma}^n_{\langle{0}\rangle}\bigr).
\]
\end{lemma}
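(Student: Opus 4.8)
The plan is to read off $\varepsilon^n_{\langle p\rangle}$ directly from its definition and then discard the (nonnegative) shortest-edge correction before invoking the diameter-contraction estimate of Kearfott's theorem. First I would unwind the notation: $\varepsilon^n_{\langle p\rangle}$ is simply the error estimate of Eq.~(\ref{eq:errorest}) evaluated on the selected simplex ${\sigma}^n_{\langle p\rangle}$, i.e.
\[
\varepsilon^n_{\langle p\rangle} = \frac{n}{n+1}\Bigl({\rm diam}\bigl({\sigma}^n_{\langle p\rangle}\bigr)^2 - \frac{n-1}{2n}\,{\rm shor}\bigl({\sigma}^n_{\langle p\rangle}\bigr)^2\Bigr)^{1/2}.
\]
The single observation that drives the whole argument is that for $n\geqslant 1$ the coefficient $(n-1)/(2n)$ is nonnegative and ${\rm shor}\bigl({\sigma}^n_{\langle p\rangle}\bigr)^2\geqslant 0$, so the subtracted term only diminishes the radicand.

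Next I would use this to pass to the coarser bound
\[
\varepsilon^n_{\langle p\rangle} \leqslant \frac{n}{n+1}\,{\rm diam}\bigl({\sigma}^n_{\langle p\rangle}\bigr),
\]
and then apply Kearfott's Theorem~\ref{Th-Kearfott1978}, which quantifies how fast the diameter shrinks under repeated bisection, namely ${\rm diam}\bigl({\sigma}^n_{\langle p\rangle}\bigr)\leqslant \bigl(\sqrt{3}/2\bigr)^{\lfloor p/n\rfloor}{\rm diam}\bigl({\sigma}^n_{\langle 0\rangle}\bigr)$ (this is the $m=n$ case). Substituting the latter into the former yields exactly
\[
\varepsilon^n_{\langle p\rangle} \leqslant \frac{n}{n+1}\bigl(\sqrt{3}/2\bigr)^{\lfloor p/n\rfloor}{\rm diam}\bigl({\sigma}^n_{\langle 0\rangle}\bigr),
\]
which is the asserted inequality.

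I do not expect any genuine obstacle here: the statement is essentially a corollary packaging two facts already in hand. The only point requiring a word of care is the legitimacy of dropping the $(n-1)/(2n)\,{\rm shor}^2$ term, and this is immediate from its sign. It is worth remarking that the right-hand side of the definition of $\varepsilon^n_{\langle p\rangle}$ coincides with the upper bound furnished by Lemma~\ref{Lem-Vrahatis-Bis21986} for $\|x-\kappa^n_{\langle p\rangle}\|$, so one could equally phrase the conclusion as a sharpening of that bound; however, deducing the displayed estimate for $\varepsilon^n_{\langle p\rangle}$ itself still requires the explicit diameter-dropping step above rather than merely comparing the two distinct upper bounds of Lemmas~\ref{Lem-Vrahatis-Bis1986} and \ref{Lem-Vrahatis-Bis21986}.
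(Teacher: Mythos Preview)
Your argument is correct and is essentially the explicit version of the paper's one-line proof, which simply cites Eq.~(\ref{eq:errorest}) together with Lemmata~\ref{Lem-Vrahatis-Bis1986} and~\ref{Lem-Vrahatis-Bis21986}. Unpacking those references amounts precisely to your two steps: recognize $\varepsilon^n_{\langle p\rangle}$ as the expression in Eq.~(\ref{eq:errorest}), drop the nonnegative $(n-1)/(2n)\,{\rm shor}^2$ term, and then apply Kearfott's diameter bound (Theorem~\ref{Th-Kearfott1978}); your closing remark that the two lemmas by themselves bound $\|x-\kappa^n_{\langle p\rangle}\|$ rather than $\varepsilon^n_{\langle p\rangle}$, so the sign observation is still needed, is well taken.
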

\begin{proof}
Using Eq.~(\ref{eq:errorest}) the proof follows by Lemmata~\ref{Lem-Vrahatis-Bis1986} and \ref{Lem-Vrahatis-Bis21986}.
\end{proof}

\begin{theorem}[Convergence of simplex bisection (1986)~\cite{Vrahatis1986}]
Let ${\sigma}^n_{\langle{p}\rangle}$ be the selected $n$-simplex
produced after $p$ bisections of an $n$-simplex
  ${\sigma}^n_{\langle{0}\rangle}$ in $\mathbb{R}^n$.
Assume that $s$ is a solution of Eq.~(\ref{fequ})
in ${\sigma}^n_{\langle{p}\rangle}$
and let $\kappa^n_{\langle{p}\rangle}$ and $\varepsilon^n_{\langle{p}\rangle}$
be the approximation of~$s$ and the error estimate for $\kappa^n_{\langle{p}\rangle}$
respectively. Then\/
$
\lim_{p\to\infty} \varepsilon^n_{\langle{p}\rangle} = 0$
and\/ $\lim_{p\to\infty} \kappa^n_{\langle{p}\rangle} = s.
$
\end{theorem}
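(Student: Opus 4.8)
The plan is to obtain both limits directly from the quantitative estimate already secured in Lemma~\ref{Lem-Vrahatis-Bis31986}, so that essentially no new computation is required. That lemma supplies
\[
\varepsilon^n_{\langle{p}\rangle} \leqslant \frac{n}{n+1}\bigl(\sqrt{3}/2\bigr)^{\lfloor p/n \rfloor}\,{\rm diam}\bigl({\sigma}^n_{\langle{0}\rangle}\bigr).
\]
First I would settle the claim $\lim_{p\to\infty}\varepsilon^n_{\langle{p}\rangle} = 0$. Here the prefactor $\tfrac{n}{n+1}\,{\rm diam}({\sigma}^n_{\langle{0}\rangle})$ is a fixed constant independent of $p$, while $\sqrt{3}/2 < 1$ and $\lfloor p/n \rfloor \to \infty$ as $p\to\infty$ for fixed $n$; hence the geometric factor tends to $0$, and the right-hand side with it. This immediately yields the first assertion.

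For the second limit I would exploit the fact that the hypothesis places the solution inside the selected element, i.e.\ $s \in {\sigma}^n_{\langle{p}\rangle}$. Applying Lemma~\ref{Lem-Vrahatis-Bis21986} to the particular point $x = s$, and recalling the definition of the error estimate in Eq.~(\ref{eq:errorest}), one obtains
\[
\|s - \kappa^n_{\langle{p}\rangle}\| \leqslant \frac{n}{n+1}\Bigl({\rm diam}\bigl({\sigma}^n_{\langle{p}\rangle}\bigr)^2 - \frac{n-1}{2n}\,{\rm shor}\bigl({\sigma}^n_{\langle{p}\rangle}\bigr)^2\Bigr)^{1/2} = \varepsilon^n_{\langle{p}\rangle}.
\]
Thus the error estimate legitimately dominates the true distance between the solution and its approximation. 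Passing to the limit and invoking the first part gives $\|s - \kappa^n_{\langle{p}\rangle}\| \to 0$, that is $\kappa^n_{\langle{p}\rangle} \to s$, which is the desired conclusion.

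In truth there is no substantive obstacle to overcome: both conclusions reduce to the contraction of the simplex diameter under repeated bisection, a fact already encapsulated in Kearfott's Theorem~\ref{Th-Kearfott1978} and propagated through the barycentric circumradius bound of Theorem~\ref{thm:simcov}. The only point that genuinely requires care is the bookkeeping that $s$ belongs to each selected element ${\sigma}^n_{\langle{p}\rangle}$; this is exactly what the selection criterion of the bisection process guarantees, and it is the hinge that lets the vanishing upper bound $\varepsilon^n_{\langle{p}\rangle}$ force the approximations $\kappa^n_{\langle{p}\rangle}$ to the solution rather than merely forcing the simplices to shrink.
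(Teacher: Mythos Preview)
Your argument is correct and follows essentially the same route as the paper: the paper's one-line proof simply cites Eq.~(\ref{eq:errorest}) and Lemma~\ref{Lem-Vrahatis-Bis1986}, whereas you unpack this into Lemma~\ref{Lem-Vrahatis-Bis31986} for $\varepsilon^n_{\langle p\rangle}\to 0$ and Lemma~\ref{Lem-Vrahatis-Bis21986} for $\|s-\kappa^n_{\langle p\rangle}\|\leqslant \varepsilon^n_{\langle p\rangle}$, which is the same content made explicit. One small remark: the membership $s\in{\sigma}^n_{\langle p\rangle}$ is actually stated as a hypothesis of the theorem rather than something you need to justify via the selection criterion.
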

\begin{proof}
Using Eq.~(\ref{eq:errorest}) the proof follows by Lemma~\ref{Lem-Vrahatis-Bis1986}.
\end{proof}

\begin{remark}
A significant related problem (which still remains attractive) refers to the determination
of a positive bound on the ratio of the lengths of the smallest edges of simplices to the lengths of the largest edges,
as the simplex bisection is proceed~\cite{Kearfott1978}.
\end{remark}

\end{document}